\def\txtd{{\textnormal{d}}}
\def\txtD{{\textnormal{D}}}
\numberwithin{equation}{section}
\numberwithin{figure}{section}
\newcounter{dummy} \numberwithin{dummy}{section}
\newtheorem{thm}[dummy]{Theorem}
\newtheorem{lem}[dummy]{Lemma}
\newtheorem{prop}[dummy]{Proposition}
\newtheorem{cor}[dummy]{Corollary}
\newtheorem{exmp}[dummy]{Example}
\newtheorem{rem}[dummy]{Remark}
\newcommand{\cO}{{\mathcal O}}  
\newcounter{assum}
\newenvironment{assum}{\refstepcounter{assum}\equation}{\tag{A\theassum}\endequation}
\newcounter{hypoth}
\newtheorem{hypothesis}[hypoth]{Hypothesis} 
\newcounter{cque}
\date{\today}
\newcommand{\R}{\mathbb{R}}
\renewcommand{\C}{\mathbb{C}}
\newcommand{\X}{{\mathbb{R}^d}}
\newcommand{\N}{\mathbb{N}}
\newcommand{\Z}{\mathbb{Z}}
\newcommand{\F}{\mathcal{F}}
\newcommand{\eps}{\varepsilon}
\newcommand{\la}{\lambda}
\newcommand{\ka}{\varkappa}
\newcommand{\kap}{\varkappa^+}
\newcommand{\kape}{\varkappa^+_\varepsilon}
\newcommand{\kam}{\varkappa^-}
\newcommand{\kame}{\varkappa^-_\varepsilon}
\newcommand{\Aek}{\mathcal{A}_{\eps,k}}
\newcommand{\Ac}{\mathcal{A}_c}
\newcommand{\at}{\widetilde{\alpha}}
\newcommand{\1}{\mathds{1}}
\newcommand{\Buc}{BUC(\R)}
\newcommand{\e}{\textnormal{e}}
\renewcommand{\i}{\textnormal{i}}
\renewcommand{\Re}{\mathfrak{Re}}
\renewcommand{\Im}{\mathfrak{Im}}
\let\copyint\int
\RenewDocumentCommand \int {o o}
{ \IfNoValueTF {#2} { \IfNoValueTF {#1} { \copyint } { \copyint\limits_{#1} } }	{ \copyint\limits_{#1}^{#2} } }
\NewDocumentCommand \diff {m m}
{ \frac{\partial #1}{\partial #2} }
\author[1]{Christian Kuehn \thanks{Email: \texttt{ckuehn@ma.tum.de}}}
\author[2]{Pasha Tkachov \thanks{Email: \texttt{pasha.tkachov@gssi.it}}}
\affil[1]{Technical University of Munich}
\affil[2]{Gran Sasso Science Institute, L'Aquila}
\title{Pattern formation in the doubly-nonlocal Fisher-KPP equation.}
\begin{document}
\maketitle

\begin{abstract}
We study the existence, bifurcations, and stability of stationary solutions for the doubly-nonlocal Fisher-KPP equation. We prove using Lyapunov-Schmidt reduction that under suitable conditions on the parameters, a bifurcation from the non-trivial homogeneous state can occur. The kernel of the linearized operator at the bifurcation is two-dimensional and periodic stationary patterns are generated. Then we prove that these patterns are, again under suitable conditions, locally asymptotically stable. We also compare our results to previous work on the nonlocal Fisher-KPP equation containing a local diffusion term and a nonlocal reaction term. If the diffusion is approximated by a nonlocal kernel, we show that our results are consistent and reduce to the local ones in the local singular diffusion limit. Furthermore, we prove that there are parameter regimes, where no bifurcations can occur for the doubly-nonlocal Fisher-KPP equation. The results demonstrate that intricate different parameter regimes are possible. In summary, our results provide a very detailed classification of the multi-parameter dependence of the stationary solutions for the doubly-nonlocal Fisher-KPP equation. 
\end{abstract}

\textbf{Keywords:} Fisher-KPP, FKPP, doubly-nonlocal, bifurcation, spatial oscillations, Lyapunov-Schmidt, stability.

\section{Introduction}

The aim of this paper is to study existence and stability of stationary solutions to the doubly-nonlocal Fisher--KPP equation.  
Namely, we consider bounded non-negative solutions $u=u(x)$ on the real line $\R$ to the following equation 
\begin{equation}
\label{eq:basic}
	\kap (a^+\ast u)(x) - mu(x) - \kam u(x) (a^-\ast u)(x)=0,\quad x\in\R,
\end{equation}
where $\kap,\kam$ and $m$ are (strictly) positive real numbers, $a^+$ and $a^-$ are probability densities, and the convolution terms are defined as follows
\[
	 (a^\pm\ast u)(x) = \int[\R] a^\pm(y) u(x-y)~\txtd y, \quad x\in\R.
\]
The evolution equation corresponding to \eqref{eq:basic} first appeared, for the case $\kap a^+ = \kam a^-$, $m=0$, in~\cite{Mol1972,Mol1972a}. For the case $\kap a^+ = \kam a^-$, $m>0$ we refer to~\cite{Dur1988} and for different kernels to~\cite{BP1997}, where the so-called Bolker--Pacala model of spatial ecology was considered. The equation~\eqref{eq:basic} was rigorously derived from the Bolker--Pacala model in~\cite{FM2004} for integrable $u$ and in~\cite{FKK2011} for bounded $u$. The long-time behavior was studied in \cite{FKT2015,FKT2016,FT2017,FT2017c}. In~\cite{AGV2006}, the term $\kap(a\ast u-u)$ was (formally) approximated by the Laplace operator using the Taylor expansion of the convolution term; see Section~\ref{sec:relation_to_FKPP_with_nonloc_reaction} below for more detail. In this approximation limit, one obtains the Fisher--KPP equation with a non-local reaction, 
\begin{equation}
\label{eq:F-KPP_nonloc_reaction}
	\partial_t u(x) = d\, \partial^2_{x} u(x) + \kam u(x)(\theta - (a^-\ast u)(x)), \quad x\in\R,
\end{equation}
where $d:= \frac{\kap}{2}\int[\R] y^2 a^+(y)~\txtd y$, $\theta$ is given in \eqref{eq:constant_solutions}, and \eqref{eq:F-KPP_nonloc_reaction} also often referred to as the nonlocal Fisher-KPP equation. Observe that there are two constant solutions to~\eqref{eq:basic} and~\eqref{eq:F-KPP_nonloc_reaction}, namely,
\begin{equation}
\label{eq:constant_solutions}
	u\equiv 0, \qquad u\equiv  \theta := \frac{\kap-m}{\kam}.
\end{equation}
It was pointed out in~\cite{Bri1989} that under additional assumptions the nonlocal Fisher-KPP equation~\eqref{eq:F-KPP_nonloc_reaction} admits a steady state bifurcation of $u\equiv\theta$ leading to existence of spatially periodic solutions. Later, more detailed analysis was carried out for a more general reaction in \cite{Bri1990}. Numerical analysis of bifurcations and traveling waves to \eqref{eq:F-KPP_nonloc_reaction} was considered in \cite{ABVV2010,AGV2006,Gou2000,GCD2001,FKK2003}. Analytical results for stationary solutions and traveling waves to~\eqref{eq:F-KPP_nonloc_reaction} can be found in~\cite{AC2012,AK2015,BNPR2009,FH2015,HR2014}. We also remark that there is a variant of the Fisher-KPP equation with a nonlocal operator replacing the Laplacian and with a local reaction~\cite{CD2007,G2011}.

In this paper we demonstrate that under additional assumptions there exists a steady-state bifurcation of $u\equiv\theta$ for the doubly-nonlocal Fisher-KPP equation~\eqref{eq:basic}. This bifurcation leads to existence of periodic solutions to~\eqref{eq:basic} and is connected with results to~\eqref{eq:F-KPP_nonloc_reaction} (specifically \cite{FH2015}) as we show in Section~\ref{sec:relation_to_FKPP_with_nonloc_reaction} via the singular local diffusion limit. Up to our knowledge, in contrast to \eqref{eq:F-KPP_nonloc_reaction}, the only results on bifurcations in \eqref{eq:basic} were done heuristically in recent publications \cite{Ayd2018,Bar2017}. Thus we present the first rigorous statements of this sort.

We stress that the problem of existence of stationary solutions to the equation~\eqref{eq:basic} depends on relations between parameters of the equation~\eqref{eq:basic}. In particular, if $\kap<m$, then $u\equiv0$ is the only non-negative bounded solution to~\eqref{eq:basic}, which follows from the Duhamel formula ($\theta<0$ in this case). If $\kap>m$, $\kap a^+(x) \geq (\kap-m)a^-(x)$, for $x\in\R$, and $a^+,a^-$ are symmetric, then the constant solutions given by \eqref{eq:constant_solutions} are the only non-negative bounded solutions to \eqref{eq:basic} (see Proposition~\ref{prop:nonexistence_of_stationary_sol} below).
If $a^+,a^-$ are non-symmetric, it is possible that there exist a traveling wave with a speed $0$, namely, there can exist decreasing $u:\R\to[0,\theta]$ which satisfies \eqref{eq:basic} and such that $u(+\infty)=0$, $u(-\infty)=\theta$ (see \cite{FKT2015}). Therefore, we have to carefully investigate, under which conditions bifurcations are possible. In Section~\ref{sec:assumptions} we formulate assumptions sufficient for a steady-state bifurcation of $u\equiv\theta$. First, we introduce a small parameter $\eps$ in \eqref{eq:basic} substituting $\kap,\kam$ by 
\[
\kape = (1+\eps)\kap \qquad \text{and}\qquad \kam_\eps = \left(1+ \eps\frac{\kap}{\kap{-}m}\right) \kam 
\]
correspondingly, which turns out to be a more suitable compact notation to state our results. Studying the problem for symmetric $a^\pm$ in the space of square-integrable periodic function on the real line, we show that the spectrum of the linearization of the left-hand side of \eqref{eq:basic} at $u\equiv\theta$ equals to the following set,
\[
	\{-\kape\} \cup \{ \alpha(\eps,k):=\kape \widehat{a^+}(k) - (\kape-m) \widehat{a^-}(k) - \kape \,\vert\, k\in \R\} \subset \R,
\]
where $\widehat{a^\pm}$ is the Fourier transform of $a^\pm$ defined below in~\eqref{def:Fourier-L1}. Next, we require, that for small $\eps<0$ the spectrum belongs to the negative half-plane $\{z\in\C\,\vert\,\Re z<0\}$, it touches the imaginary axis $\{z\in\C\,\vert\, \Re(z) = 0\}$ for $\eps=0$, and it intersects the positive half-plane $\{z\in\C\,\vert\, \Re(z)>0\}$ for small $\eps>0$. Thus, we have the following picture:  

\includegraphics[width=\textwidth]{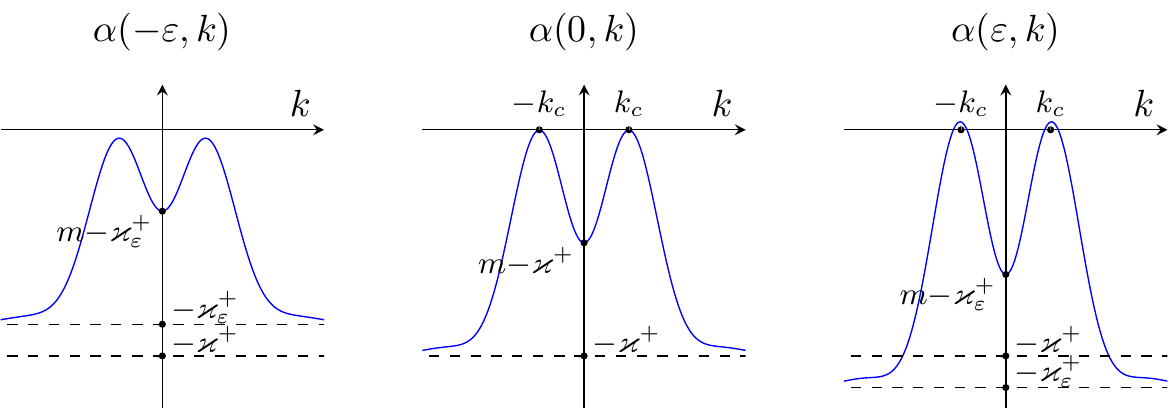}

These assumptions impose constraints on the parameters, yet they imply existence of periodic solutions to \eqref{eq:basic}, that we prove in Section~\ref{sec:existence} applying the Lyapunov--Schmidt reduction method. Namely, we demonstrate that for any sufficiently small $\eps>0$ and $\delta$ (probably negative), there exists a periodic solution to \eqref{eq:basic} with a period $\frac{2\pi}{k_c+\delta}$, where $k_c>0$ is such that $\alpha(0,k_c)=0$.

\begin{figure}[H]
	\includegraphics[width=\textwidth]{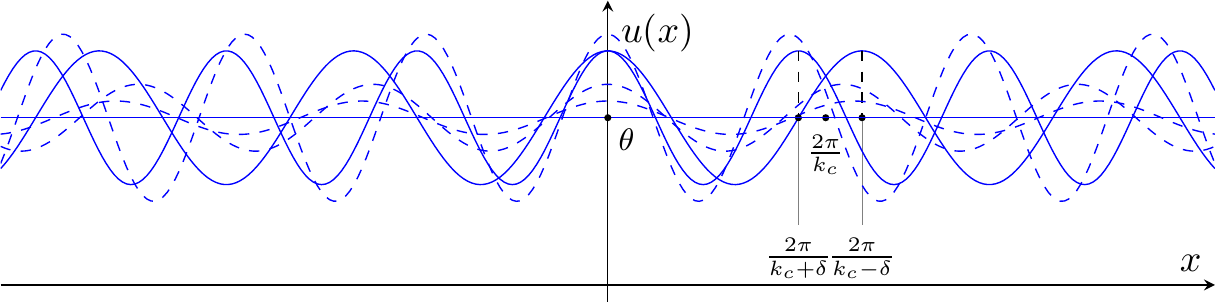}
	\caption{\text{Sketch of the existence of classes of periodic solutions for small }$\eps$\text{ and }$\delta$.}
\end{figure}

Section~\ref{sec:stability} is devoted to the study of stability of the solutions in a space of square-integrable periodic functions. We show, that the solutions are (locally) asymptotically stable with respect to perturbations with the same period and phase. Section~\ref{sec:examples} provides several examples of the probability densities $a^+$, $a^-$, which satisfy assumptions of the previous sections. Since our article is related to the results for~\eqref{eq:F-KPP_nonloc_reaction}, specifically to~\cite{FH2015}, we demonstrate this detailed connection in a certain limiting case in Section~\ref{sec:relation_to_FKPP_with_nonloc_reaction}. Section~\ref{sec:nonexistence} presents some results on non-existence of solutions to \eqref{eq:basic}, which shows that one really has to distinguish fundamentally different behaviour already for the stationary solutions of doubly-nonlocal equation.

\section{Assumptions} 
\label{sec:assumptions}

We start with some notation and background results. We denote $\R_+:=[0,\infty)$, $\|\cdot\|_p:=\|\cdot\|_{L^p(\R)}$, for $p\in[1,\infty]$. We will use Bachmann--Landau big $\cO$ and little $o$ notations
\[
	f(x) = \cO(g(x)),\ x\to x_0, \qquad f(x) = o(g(x)),\ x\to x_0.
\]

\begin{lem}\label{lem:lone_linft}
 Let $a\in L^1(\X)$ and $f\in L^\infty(\X)$. Then it follows that $a\ast f\in BUC(\X)$.
\end{lem}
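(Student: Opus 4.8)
The plan is to establish the two defining properties of $BUC(\X)$ separately: boundedness and uniform continuity of $g := a\ast f$. Boundedness is immediate from the triangle inequality for integrals: for every $x\in\X$,
\[
	|g(x)| = \left| \int[\X] a(y) f(x-y)~\txtd y \right| \leq \|f\|_\infty \int[\X] |a(y)|~\txtd y = \|a\|_1 \|f\|_\infty,
\]
so $g\in L^\infty(\X)$ with $\|g\|_\infty \leq \|a\|_1\|f\|_\infty$ (Young's inequality). In particular $g(x)$ is well-defined for every $x$, since $y\mapsto a(y)f(x-y)$ is integrable.

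For uniform continuity the key trick is to move the translation off of $f$ (which is merely $L^\infty$) and onto $a$ (which is $L^1$, hence amenable to approximation). For $x,h\in\X$, the substitution $y\mapsto y+h$ in the integral gives
\[
	g(x+h) - g(x) = \int[\X] \bigl( a(y-h) - a(y) \bigr) f(x-y)~\txtd y,
\]
whence
\[
	|g(x+h) - g(x)| \leq \|f\|_\infty \int[\X] | a(y-h) - a(y) |~\txtd y = \|f\|_\infty \, \| \tau_h a - a \|_1,
\]
where $\tau_h a := a(\cdot - h)$. The right-hand side is independent of $x$, so it suffices to show that $\|\tau_h a - a\|_1 \to 0$ as $h\to 0$, i.e. continuity of translation in $L^1(\X)$.

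For that last point I would invoke the density of $C_c(\X)$ in $L^1(\X)$: given $\eta>0$, pick $\psi\in C_c(\X)$ with $\|a-\psi\|_1 < \eta$. Since $\psi$ is uniformly continuous with compact support, $\|\tau_h\psi - \psi\|_1 \to 0$ as $h\to 0$ (the integrand tends to $0$ uniformly and is supported in a fixed bounded set for $|h|\leq 1$). Combining with the translation-invariance of the $L^1$ norm, $\|\tau_h a - a\|_1 \leq \|\tau_h a - \tau_h\psi\|_1 + \|\tau_h\psi - \psi\|_1 + \|\psi - a\|_1 \leq 2\eta + \|\tau_h\psi-\psi\|_1$, which is $< 3\eta$ for $|h|$ small. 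Hence $\sup_{x\in\X}|g(x+h)-g(x)| \leq \|f\|_\infty \|\tau_h a - a\|_1 \to 0$ as $h\to 0$, which is exactly uniform continuity of $g$. The only mildly technical ingredient is the continuity of translation in $L^1$; everything else is a one-line estimate, so I would either cite this standard fact or include the short density argument above.
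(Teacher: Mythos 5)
Your proof is correct and complete: boundedness follows from Young's inequality, and uniform continuity is reduced, via translation-invariance of Lebesgue measure, to continuity of translation in $L^1(\X)$, which you justify by density of $C_c(\X)$. The paper itself does not spell out an argument but merely cites \cite[Lemma 3.1]{FT2017}; your argument is the standard one behind that reference, so there is nothing to object to, and including the short density argument (or citing the continuity of $L^1$-translation as a known fact) is exactly the right level of detail.
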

\begin{proof}
See e.g.  \cite[Lemma 3.1]{FT2017}
\end{proof}
\begin{cor}\label{cor:solution_is_buc}
	If $u\in L^\infty(\R\to\R_+)$ solves \eqref{eq:basic}, then $u\in \Buc$.
\end{cor}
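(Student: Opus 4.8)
The plan is to rewrite equation \eqref{eq:basic} so that $u$ itself is displayed as a convolution of an $L^1$ kernel with an $L^\infty$ function, and then invoke Lemma \ref{lem:lone_linft}. Suppose $u\in L^\infty(\R\to\R_+)$ solves \eqref{eq:basic}. First I would isolate the pointwise term: from
\[
	\kap (a^+\ast u)(x) - mu(x) - \kam u(x) (a^-\ast u)(x)=0
\]
we get, after dividing by the positive constant $m$,
\[
	u(x) = \frac{1}{m}\Bigl(\kap (a^+\ast u)(x) - \kam u(x) (a^-\ast u)(x)\Bigr), \qquad x\in\R.
\]
So the claim reduces to showing that the right-hand side lies in $\Buc$.

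Next I would handle the two summands separately. For the first, since $a^+\in L^1(\R)$ (it is a probability density) and $u\in L^\infty(\R)$, Lemma \ref{lem:lone_linft} gives $a^+\ast u\in\Buc$, hence so is $\tfrac{\kap}{m}(a^+\ast u)$. For the second summand, the same lemma applied with $a=a^-$ gives $a^-\ast u\in\Buc\subset L^\infty(\R)$, with $\|a^-\ast u\|_\infty\le \|a^-\|_1\|u\|_\infty=\|u\|_\infty$; since $u\in L^\infty(\R)$ as well, the product $u\cdot(a^-\ast u)$ is in $L^\infty(\R)$. This shows the right-hand side is in $L^\infty(\R)$, so $u$ agrees a.e.\ with an $L^\infty$ function — but to upgrade to continuity I need the product term itself to be continuous, which is not immediate because $u$ need only be essentially bounded a priori. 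The resolution is a bootstrap: having just shown $u$ equals a.e.\ the $\Buc$ function $\tfrac{\kap}{m}(a^+\ast u)$ minus the $L^\infty$ function $\tfrac{\kam}{m}u(a^-\ast u)$, redo the convolutions using that $u$ now has a continuous representative; then $a^-\ast u$ is continuous, $u$ is continuous, so their product is continuous, and the displayed formula exhibits $u$ as a continuous (indeed bounded and uniformly continuous, since $a^+\ast u\in\Buc$ and the correction term is a product of bounded uniformly continuous functions) function.

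The only mild subtlety — and the step I would be most careful about — is this matter of a.e.\ representatives versus genuine continuity: convolution of an $L^1$ kernel against an $L^\infty$ function is well-defined pointwise and continuous, but the product $u\cdot(a^-\ast u)$ is only manifestly continuous once we know $u$ has a continuous representative, so one must first extract the representative from the linear part $\tfrac{\kap}{m}(a^+\ast u)$ and then re-substitute. Everything else is routine: $\|u(a^-\ast u)\|_\infty\le\|u\|_\infty^2<\infty$, products of bounded uniformly continuous functions are bounded uniformly continuous, and finite linear combinations of $\Buc$ functions are in $\Buc$. Hence $u\in\Buc$.
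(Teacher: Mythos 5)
Your rearrangement isolates $mu$ additively, $u = \tfrac{\kap}{m}(a^+\ast u) - \tfrac{\kam}{m}\,u\,(a^-\ast u)$, and you correctly observe that this only shows the right-hand side lies in $L^\infty$, since the nonlinear term carries a bare factor of $u$. The problem is that your proposed fix is circular: ``redo the convolutions using that $u$ now has a continuous representative'' presupposes exactly what is to be proved. At that stage you have only exhibited $u$ as a $\Buc$ function minus an $L^\infty$ function, which yields no continuous representative; and a representative cannot be ``extracted from the linear part $\tfrac{\kap}{m}(a^+\ast u)$'' because $u$ is not a.e.\ equal to that linear part alone. Iterating the additive identity does not help either: every re-substitution reintroduces an un-smoothed factor of $u$ inside the product term, so no regularity is ever gained. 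As written, your argument establishes only $u\in L^\infty(\R)$, which is the hypothesis.

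The missing idea is to solve the equation for $u$ algebraically so that $u$ is expressed \emph{entirely} through convolutions, with no bare $u$ on the right. Since $u\geq0$ and $a^-\geq0$, one has a.e.
\[
 u(x)\bigl(m+\kam (a^-\ast u)(x)\bigr) = \kap (a^+\ast u)(x), \qquad\text{hence}\qquad u(x) = \frac{\kap (a^+\ast u)(x)}{m+\kam (a^-\ast u)(x)},
\]
and the right-hand side is a quotient of functions in $\Buc$ (by Lemma~\ref{lem:lone_linft}) whose denominator is bounded below by $m>0$; such a quotient is again in $\Buc$, so $u$ coincides a.e.\ with a $\Buc$ function and the a.e.-representative issue disappears. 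This is exactly the paper's short proof, and note that it is precisely here that the standing assumption $u\geq 0$ (together with $a^-\geq 0$) is used to keep the denominator away from zero --- an assumption your additive route never invokes, which is a symptom of the gap.
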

\begin{proof}
	If $u\in L^\infty(\R\to\R_+)$ solves \eqref{eq:basic}, then
	\[
		0 \leq u(x) = \frac{\kap (a^+\ast u)(x)}{\kam (a^-\ast u)(x) + m} \leq \frac{\kap \|u\|_\infty}{m}.
	\]
	Hence, the statement follows from Lemma~\ref{lem:lone_linft}.
\end{proof}

Our first main assumptions are the following
\begin{assum}\label{assum:assumptions_basic}
	\begin{gathered}\kap{>}m, \qquad a^\pm(-x)\equiv a^\pm(x),\\  \int[\R] x^2 a^\pm(x)~\txtd x<\infty, \qquad  a^\pm(x) \leq \frac{C}{1+|x|^{1{+}\xi}},\ x\in\R,
	\end{gathered}
\end{assum}
where $C,\xi>0$ are some fixed constants. Note that the first assumption in~\eqref{assum:assumptions_basic} already hints at the fact that we must impose certain growth restrictions on the linear part to obtain bifurcation results. The further assumptions are typical technical assumptions on the kernel(s) for nonlocal and doubly-nonlocal Fisher-KPP equations. We introduce a small parameter $\eps$, to study structural changes of the solutions to \eqref{eq:basic} under small perturbations of this parameter. To simplify our notations, we will write $\kape :=  (1+\eps)\kap$. Let us suppose that the constant solution $u\equiv \theta = \frac{\kap-m}{\kam}$ is independent of $\eps$ and so we have
\[
\frac{\kap_\eps - m_\eps}{\kam_\eps} = \theta,
\]
where subscripts denote new parameters. Let us also assume that $m$ is not changed so that $m_\eps \equiv m>0$. 
As a result of the parameter change, the coefficients in~\eqref{eq:basic} are transformed as follows, 
\begin{equation}
\label{eq:def_kape_kame_me}
	\kape = (1+\eps)\kap, \quad \kam_\eps = \left(1+ \eps\frac{\kap}{\kap{-}m}\right) \kam, \quad m_\eps = m.
\end{equation}
If we set $w := u - \theta$, then $w$ satisfies the following equation
\begin{equation}\label{eq:basic_shifted}
  \kape a^+\ast w - (\kape{-}m)a^-\ast w - \kape w - \kame w a^-\ast w = 0, \quad x\in\R.
\end{equation}
We will study bifurcations of $u\equiv \theta$ in the class of periodic functions, i.e., bifurcations of $w$ from the branch of trivial solutions a $w\equiv0$. Therefore, we introduce the following (complex) Hilbert space of periodic square-integrable functions with a period $p>0$,
\begin{gather*}
   L^2_p(\R) := \{ f:\R\to\C \vert f\in L^2([0,p]);\ f(x) = f(x+p),\ x\in\R\}, \label{def:L2p} \\
	(f,g)_{L^2_p} := \frac{1}{p} \int[0][p] f(x)\bar{g}(x)~\txtd x. 
\end{gather*}
Let us ensure that $w\in L^2_p(\R)$ implies $w(a^-\ast w)\in L^2_p(\R)$. 
\begin{prop}\label{prop:w_a_w_in_l2p}
	Let $w \in L^2_p(\R)$ and $a\in L^1(\R\to\R_+)$ be such that
	\begin{equation*}
		I_p(a):= \sqrt{p} \sup_{x\in\R} \sum_{j\in\Z} \|a(x{-}\cdot)\|_{L^2([jp,(j+1)p))} < \infty.
	\end{equation*}
	Then
	\begin{equation}\label{eq:l2p_linf_est}
		\frac{\|w (a\ast w) \|_{L^2_p(\R)}}{\|w\|_{L^2_p(\R)}} \leq \|a\ast w\|_\infty \leq I_p(a) \|w\|_{L^2_p(\R)}.
	\end{equation}
	In particular, $w\in L^2_p(\R)$ implies $w(a^-\ast w)\in L^2_p(\R)$ if for some $C,\xi>0$, 
  \[
  	a(x) \leq \frac{C}{1+|x|^{1+\xi}}, \quad x\in\R.
  \]
\end{prop}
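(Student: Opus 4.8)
The plan is to prove the two inequalities in \eqref{eq:l2p_linf_est} from left to right, and then deduce the final membership claim by verifying that the polynomial decay bound forces $I_p(a)<\infty$. The leftmost inequality is nearly immediate: since $w$ is $p$-periodic, $\|w(a\ast w)\|_{L^2_p} = \left(\frac1p\int_0^p |w(x)|^2 |(a\ast w)(x)|^2\,\txtd x\right)^{1/2} \le \|a\ast w\|_\infty \left(\frac1p\int_0^p|w(x)|^2\,\txtd x\right)^{1/2} = \|a\ast w\|_\infty\,\|w\|_{L^2_p}$, using that $a\ast w\in L^\infty(\R)$ by Lemma~\ref{lem:lone_linft} (here $a\in L^1$ and $w\in L^\infty$ since a periodic $L^2$ function on a bounded interval is — wait, it need not be; but $a\ast w$ still makes sense and is bounded by the estimate we are about to prove, so I would actually organize things so the second inequality is established first and then used to justify $a\ast w\in L^\infty$).

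For the main (second) inequality, fix $x\in\R$ and split the convolution integral over the translated fundamental cells $[jp,(j+1)p)$, $j\in\Z$:
\[
  |(a\ast w)(x)| = \left| \int_\R a(x-y) w(y)\,\txtd y \right| \le \sum_{j\in\Z} \int_{jp}^{(j+1)p} |a(x-y)|\,|w(y)|\,\txtd y.
\]
On each cell apply Cauchy--Schwarz: $\int_{jp}^{(j+1)p}|a(x-y)||w(y)|\,\txtd y \le \|a(x-\cdot)\|_{L^2([jp,(j+1)p))}\,\|w\|_{L^2([jp,(j+1)p))}$, and by $p$-periodicity $\|w\|_{L^2([jp,(j+1)p))} = \|w\|_{L^2([0,p))} = \sqrt{p}\,\|w\|_{L^2_p}$ for every $j$. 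Summing over $j$ and taking the supremum over $x$ yields $\|a\ast w\|_\infty \le \sqrt{p}\sup_x \sum_{j}\|a(x-\cdot)\|_{L^2([jp,(j+1)p))}\cdot\|w\|_{L^2_p} = I_p(a)\|w\|_{L^2_p}$. This also shows, once $I_p(a)<\infty$, that $a\ast w$ is a well-defined bounded function, closing the small gap mentioned above; boundedness plus Lemma~\ref{lem:lone_linft}-type reasoning gives $a\ast w\in \Buc$, and then $w(a\ast w)\in L^2_p$ since $w\in L^2_p$ and the product of an $L^2_p$ function with an $L^\infty$ function is again in $L^2_p$.

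It remains to show that the decay hypothesis $a(x)\le C(1+|x|^{1+\xi})^{-1}$ implies $I_p(a)<\infty$. Here one estimates, uniformly in $x$, the sum $\sum_{j\in\Z}\|a(x-\cdot)\|_{L^2([jp,(j+1)p))}$. On each cell, $\|a(x-\cdot)\|_{L^2([jp,(j+1)p))}^2 \le \sqrt{p}\,\big(\sup_{y\in[jp,(j+1)p)} a(x-y)\big)\cdot\|a\|_\infty$ — or more simply, bound $\|a(x-\cdot)\|_{L^2([jp,(j+1)p))} \le \sqrt{p}\sup_{y\in[jp,(j+1)p)}|a(x-y)|$. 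For the finitely many cells within distance $O(p)$ of $x$ one uses the crude bound $a\le C$ (the decay bound gives $a\in L^\infty$), contributing a constant; for the remaining cells, $|x-y|$ is comparable to $|j|p$ up to a bounded shift, so $\sup_{y} a(x-y) \le C'(1+(|j|p)^{1+\xi})^{-1}$, and $\sum_j (1+(|j|p)^{1+\xi})^{-1}<\infty$ since $1+\xi>1$. The sup over $x$ is harmless because shifting $x$ only relabels which cell is "central." The only mild subtlety — the main point requiring a little care — is making the per-cell sup-bound uniform in $x$ and correctly matching the decay rate of $a$ against the summation index; everything else is Cauchy--Schwarz and periodicity.
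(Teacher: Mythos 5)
Your proposal is correct and follows essentially the same route as the paper: pull out $\|a\ast w\|_\infty$ for the first inequality, then split the convolution over the cells $[jp,(j+1)p)$, apply Cauchy--Schwarz on each cell together with $\|w\|_{L^2([jp,(j+1)p))}=\sqrt{p}\,\|w\|_{L^2_p}$, and finally verify $I_p(a)<\infty$ from the polynomial decay by comparison with the convergent series $\sum_j (1+(|j|p)^{1+\xi})^{-1}$. The paper's proof is exactly this, only stated more tersely (its last step is the one-line estimate $I_p(a)\leq \sum_{j\in\N} 2C\sqrt{p}\,(1+|jp|^{1+\xi})^{-1}$), so no further comment is needed.
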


\begin{proof}
	Since $w$ is $p-$periodic, we have
	\begin{align*}
		\frac{\|w (a\ast w) \|_{L^2_p(\R)}}{\|w\|_{L^2_p(\R)}} &\leq \sup_{x\in\R} |(a\ast w)(x)| \leq \sup_{x\in\R} \sum_{j\in\Z} \int[jp][(j+1)p] a(x-y) |w(y)| ~\txtd y \\
		&\leq \sup_{x\in\R} \sum_{j\in\Z} \sqrt{p} \|a(x{-}\cdot)\|_{L^2([jp,(j+1)p) )} \|w\|_{L^2_p(\R)}\\ 
		&=  I_p(a)\|w\|_{L_2^p(\R)}.
	\end{align*}
	The last statement of the proposition follows from  the estimate
	\[
		I_p(a) \leq \sum_{j\in\N}\frac{2C\sqrt{p}}{1+|jp|^{1+\xi}} <\infty,
	\]
	which finishes the proof.
\end{proof}

\begin{cor}
\label{cor:lp_is_linfity}
	Consider $p>0$ and suppose that $u\in L_p^2(\R\to\R_+)$ satisfies \eqref{eq:basic} and $I_p(a^+)<\infty$.
	Then $u\in L^\infty(\R)$.
\end{cor}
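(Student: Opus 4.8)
The plan is to obtain the $L^\infty$ bound by discarding a sign-definite term in \eqref{eq:basic} and then invoking the a~priori estimate already established in Proposition~\ref{prop:w_a_w_in_l2p}. Concretely, I would first rewrite \eqref{eq:basic} as
\[
	m\,u(x) \;=\; \kap\,(a^+\ast u)(x) \;-\; \kam\,u(x)\,(a^-\ast u)(x), \qquad x\in\R,
\]
and observe that, since $u\ge 0$ and $a^-\ge 0$, the product $\kam\,u(x)\,(a^-\ast u)(x)$ is nonnegative; hence $m\,u(x)\le \kap\,(a^+\ast u)(x)$ for a.e.\ $x\in\R$. The key point is that we never need to estimate the quadratic term — only to use its sign — so no hypothesis on $a^-$ (beyond whatever is needed to make sense of \eqref{eq:basic}) enters the argument.

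Next I would apply estimate~\eqref{eq:l2p_linf_est} of Proposition~\ref{prop:w_a_w_in_l2p} with $a=a^+$ and $w=u$: this is legitimate because $u\in L^2_p(\R)$ and $I_p(a^+)<\infty$, and it gives $\|a^+\ast u\|_\infty\le I_p(a^+)\,\|u\|_{L^2_p(\R)}<\infty$. Combining this with the inequality above yields
\[
	\|u\|_{L^\infty(\R)} \;\le\; \frac{\kap}{m}\,I_p(a^+)\,\|u\|_{L^2_p(\R)} \;<\;\infty,
\]
which is precisely the claim $u\in L^\infty(\R)$.

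I do not expect a serious obstacle here; the only delicate point is the interpretation of \eqref{eq:basic} for a function that is merely locally square-integrable. For $a^+\ast u$ this is harmless: splitting $\R$ into translates of $[0,p)$ and applying Cauchy--Schwarz exactly as in the proof of Proposition~\ref{prop:w_a_w_in_l2p} shows that $a^+\ast u$ is defined everywhere and bounded once $I_p(a^+)<\infty$. The convolution $a^-\ast u$ is a well-defined nonnegative (finite a.e.) function by the standing interpretation of \eqref{eq:basic}, and since we use nothing about it except nonnegativity, the argument closes without any further control of $a^-$.
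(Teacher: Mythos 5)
Your proposal is correct and is essentially the paper's own argument: the paper simply writes $u(x)=\frac{\kap(a^+\ast u)(x)}{m+\kam(a^-\ast u)(x)}\le\frac{\kap}{m}(a^+\ast u)(x)$ (using the sign of the quadratic term via the denominator, just as you drop it on the right-hand side) and then applies estimate~\eqref{eq:l2p_linf_est} with $a=a^+$ to conclude $\|u\|_\infty\le\frac{\kap}{m}I_p(a^+)\|u\|_{L^2_p(\R)}$. Nothing further is needed.
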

\begin{proof} By \eqref{eq:l2p_linf_est}, the proof follows from the following estimate
\[
	u(x) = \frac{\kap (a^+\ast u)(x)}{m+ \kam (a^-\ast u)(x)} \leq \frac{\kap}{m} I_p(a^+) \|u\|_{L^2_p(\R)}.\qedhere 
\]
\end{proof}

As a next step, it is helpful to introduce the wave number $k$. For $a\in L^1(\R)$ and $f\in L^2_{\frac{2\pi}{k}}(\R)$ observe that
\[
	(a\ast f)(\tfrac{x}{k}) = (a_k\ast f^k)(x), \quad x\in\R,
\]
where $a_k(x) := \frac{1}{k} a(\frac{x}{k})$, $f^k(x) := f(\tfrac{x}{k}) \in L^2_{2\pi}(\R)$. Hence, if $w$ satisfies \eqref{eq:basic_shifted} and $v(x) = w(\tfrac{x}{k})$, then $v$ satisfies the following equation, for $x\in\R$,
\begin{equation}
\label{eq:basic_shifted_scaled}
	F(v,\eps, k) := \underbrace{\kape a^+_{k}\ast v - (\kape{-}m)a^-_{k}\ast v - \kape v}_{=: \Aek v } \underbrace{- \kame v a^-_{k}\ast v}_{=:\mathit{R}(v,\eps,k)}=0, 
\end{equation}
which is the main bifurcation problem we study near the point $(v,\eps)=(0,0)$. In particular, instead of considering \eqref{eq:basic_shifted} with $w\in \cup_{k>0} L^2_{\frac{2\pi}{k}}(\R)$, we consider \eqref{eq:basic_shifted_scaled} on $L^2_{2\pi}(\R)$, passing to the space with the fixed period $2\pi$.

\begin{rem}
Since, by \eqref{assum:assumptions_basic}, $a^\pm(x) \equiv a^\pm(-x)$, we can conclude that $\Aek$ given by \eqref{eq:basic_shifted_scaled} is a bounded self-adjoint operator in $L^2_{2\pi}(\R)$. Therefore, the spectrum of $\Aek$, denoted by $\sigma(\Aek) \subset \R$, is a bounded set.
\end{rem}

To observe a bifurcation at $\eps=0$, we assume that the spectrum of the operator $\mathcal{A}_{\eps,k}$ passes through the imaginary axis at $\eps=0$, and some $k=k_c>0$, namely, there exist $k_c,\eps_0,\delta_0>0$, such that for all $\eps\in(0,\eps_0)$, $\delta\in(-\delta_0,\delta_0)$, in $L^2_{2\pi}(\R)$,  
\begin{gather}
	\sigma(\mathcal{A}_{-\eps,k_c+\delta}) \subset (-\infty,0); \ \ \{0\} \in \sigma(\mathcal{A}_{0,k_c}); \ \  \sigma(\mathcal{A}_{\eps,k_c+\delta})\cap (0,\infty) \neq \varnothing.  \label{assum:spectrum_intersects_zero}
\end{gather}
It is helpful to re-interpret the last assumption more concretely in Fourier space. Consider the Fourier transform of $f\in L^2_{2\pi}(\R)$ defined by
\begin{equation*}
	(\F f)(j) := \frac{1}{2\pi} \int[0][2\pi] \e^{-\i jx} f(x) ~\txtd x, \qquad j\in\Z,
\end{equation*}
and the Fourier transform of $a\in L^1(\R)$ given by
\begin{equation}\label{def:Fourier-L1}
	\widehat{a}(k) := \int[\R] a(x)\e^{-\i k\cdot x} ~\txtd x, \quad k\in\R.
\end{equation}
Obviously, if $a\in L^1(\R)$ and $f\in L^2_{2\pi}(\R)$, then $a\ast f\in L^2_{2\pi}(\R)$, and
\[
	(\F (a\ast f))(j) = \widehat{a}(j) (\F f)(j), \quad j\in\Z.
\]
We denote
\[
	l_2 := \left\{ a=\{a_j\in\C\}_{j\in\Z} : \sum_{j\in\Z} |a_j|^2 <\infty \right\}, \qquad (a,b)_{l_2} := \sum_{j\in\Z} a_j \bar{b}_j.
\]
By the Plancherel formula $\F: L^2_{2\pi}\to l_2$ is a unitary operator, which implies
\[
	\sigma(\F \Aek \F^{-1}) = \sigma(\Aek).
\]
Since, for $a_k(x) := \frac{1}{k} a(\frac{x}{k})$, $ \hat{a}_k(j) = \hat{a}(kj)$, it follows we get for $f\in L^2_{2\pi}(\R)$,
\begin{gather}
	(\F \Aek f)(j) = \alpha(\eps,jk) (\F f)(jk), \quad j\in\Z, \nonumber\\ 
	\text{where} \quad \alpha(\eps,p) := \kape \widehat{a^+}(p) - (\kape-m) \widehat{a^-}(p) - \kape. \label{eq:def_alpha}
\end{gather}
Therefore, $\sigma(\Aek)$ is the closure of the set $\{ \alpha(\eps, jk) | j\in\Z\}$. Since $\alpha(\eps,p) \to -\kape$ as $|p|\to\infty$, the condition \eqref{assum:spectrum_intersects_zero} follows from the assumption
\begin{equation}\label{eq:alpha_eps_delta_less_alpha_0_0}
	\alpha(-\eps, k_c+\delta) < \alpha(0, k_c)=0 < \alpha(\eps,k_c+\delta), \quad \eps\in(0, \eps_0),\ |\delta|<\delta_0.
\end{equation}
We want to re-formulate the last bifurcation condition more concisely in terms of $\alpha$ and its derivatives. First, we have to assume that
\begin{assum}
\label{assum:spectrum_intersects_zero_ii}
	\exists k_c>0:\quad \alpha(0,k_c)=0.
\end{assum}
We also assume that there are well-distinguished critical modes
\begin{assum}\label{assum:alpha_doesnt_have_other_zeroes}
	\alpha(0,j k_c) \neq 0, \quad j\neq\pm1.
\end{assum}
Note that $\Re(\alpha(0,p)) = \alpha(0,p) = \alpha(0,-p)$, since $a^\pm(x) \equiv a^\pm(-x)$. By \eqref{assum:assumptions_basic} and \eqref{eq:def_alpha}, we have,
\begin{equation}\label{eq:alpha_is_smooth}
	\exists \rho>0: \quad (\eps,k)\to\alpha(\eps,k) \in C^2((-\rho,\rho)\times(k_c-\rho,k_c+\rho)).
\end{equation}
In order for \eqref{eq:alpha_eps_delta_less_alpha_0_0} to hold, by \eqref{eq:alpha_is_smooth}, we assume, 
\begin{assum}\label{eq:dalpha_dk_zero}
	\frac{\partial}{\partial k} \alpha(0,k_c) = 0 \qquad \Big( \Leftrightarrow\ \kap\frac{\partial}{\partial k} \widehat{a^+}(k_c) = (\kap{-}m)\frac{\partial}{\partial k} \widehat{a^-}(k_c) \Big).
\end{assum}
We can understand the spectrum near the critical wave number by considering the following expansion
\begin{align}
	\alpha(\eps, k_c+\delta) =& \underbrace{\alpha(0,k_c)}_{=0\text{ by }\eqref{assum:spectrum_intersects_zero_ii}} + \underbrace{\diff{}{k}\alpha(0,k_c)\delta}_{=0\text{ by }\eqref{eq:dalpha_dk_zero}} + \underbrace{\diff{}{\eps} \alpha(0,k_c) \eps + \frac{1}{2}\frac{\partial^2}{\partial k^2} \alpha(0,k_c) \delta^2}_{=:\,\Omega(\eps,\delta)} \nonumber \\ 
	&+ \underbrace{\frac{\partial^2}{\partial k\partial \eps} \alpha(0,k_c) \delta \eps}_{=o(|\eps|+\delta^2),\ \delta\to 0,\,\eps\to 0} + \underbrace{\frac{1}{2}\frac{\partial^2}{\partial \eps^2}  \alpha(0,k_c) \eps^2}_{=0\text{ by the def. of }\alpha} + o(\delta^2 + \eps) \nonumber\\
		=&\ \Omega(\eps,\delta) +  o(\delta^2+\eps),\quad \delta\to0,\ \eps\to0. \label{eq:alpha_series_decomposition}
\end{align}
By \eqref{assum:spectrum_intersects_zero_ii}, we obtain
\[
	\frac{\kap-m}{\kap} \widehat{a^-}(k_c) = \widehat{a^+}(k_c) -1 <0.
\]
Hence, we automatically get a transversality condition for bifurcation parameter
\begin{equation}\label{eq:dalpha_deps}
	\diff{}{\eps} \alpha(0,k_c) = \kap(\widehat{a^+}(k_c) - \widehat{a^-}(k_c) -1) = - m\widehat{a^-}(k_c)  > 0.
\end{equation}
In order to satisfy the first inequality in \eqref{eq:alpha_eps_delta_less_alpha_0_0} (consider e.g. $-\eps = \delta^3<0$ in \eqref{eq:alpha_series_decomposition}), we assume 
\begin{assum}\label{assum:d2_alpha_dk2_less_zero}
	\frac{\partial^2}{\partial k^2} \alpha(0,k_c)<0 \qquad \Big( \Leftrightarrow \ \kap\frac{\partial^2}{\partial k^2} \widehat{a^+}(k_c) < (\kap{-}m)\frac{\partial^2}{\partial k^2} \widehat{a^-}(k_c)  \Big).
\end{assum}
To ensure the second inequality in \eqref{eq:alpha_eps_delta_less_alpha_0_0} it is sufficient to suppose that $\Omega(\eps,\delta)>0$, which, by \eqref{eq:dalpha_deps} and \eqref{assum:d2_alpha_dk2_less_zero}, holds if 
\begin{assum}\label{assum:delta_less_eps}
	\frac{\delta^2}{\eps} < \frac{2\frac{\partial}{\partial \eps} \alpha(0,k_c)}{-\frac{\partial^2}{\partial k^2} \alpha(0,k_c)} = \frac{2m\widehat{a^-}(k_c)}{\frac{\partial^2}{\partial k^2} \alpha(0,k_c)}.
\end{assum}
We denote $\Ac = \mathcal{A}_{0,k_c}$. Now we can check that our assumptions limit the critical modes to a two-dimensional space:  

\begin{lem}\label{lem:spectrum_Ac_is_2d}
  Let \eqref{assum:assumptions_basic}, \eqref{assum:spectrum_intersects_zero_ii} and \eqref{assum:alpha_doesnt_have_other_zeroes} hold, then	
	\begin{equation}\label{eq:spectrum_Ac_is_2d}
		\ker  \Ac = {\textnormal{span}}\{\e^{\i x},\e^{-\i x}\}.
	\end{equation}
\end{lem}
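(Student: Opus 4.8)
The plan is to diagonalize $\Ac = \mathcal{A}_{0,k_c}$ in Fourier space and read off its kernel directly from the action on the standard basis $\{\e^{\i j x}\}_{j\in\Z}$ of $L^2_{2\pi}(\R)$. Concretely, I would recall from the computation preceding \eqref{eq:def_alpha} that $\F \Ac \F^{-1}$ is the multiplication operator on $l_2$ sending the $j$-th coordinate to $\alpha(0,j k_c)$ times it; equivalently, $\Ac \e^{\i j x} = \alpha(0, j k_c)\, \e^{\i j x}$ for every $j\in\Z$. Hence $f = \sum_{j} c_j \e^{\i j x} \in \ker \Ac$ if and only if $\alpha(0, j k_c)\, c_j = 0$ for all $j$, i.e. $c_j = 0$ whenever $\alpha(0, j k_c)\neq 0$.

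The next step is to determine exactly which indices $j$ satisfy $\alpha(0, j k_c) = 0$. By \eqref{assum:spectrum_intersects_zero_ii} we have $\alpha(0, k_c) = 0$, and since $a^\pm$ are even we have $\widehat{a^\pm}(p) = \widehat{a^\pm}(-p)$ real, so $\alpha(0,-k_c) = \alpha(0,k_c) = 0$ as well; thus $j = \pm 1$ are zeros. For $j = 0$ we have $\alpha(0,0) = \kap \widehat{a^+}(0) - (\kap - m)\widehat{a^-}(0) - \kap = \kap - (\kap - m) - \kap = -m \neq 0$ since $a^\pm$ are probability densities. For $|j| \geq 2$, assumption \eqref{assum:alpha_doesnt_have_other_zeroes} says precisely that $\alpha(0, j k_c) \neq 0$. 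Therefore the only surviving Fourier modes are $j = \pm 1$, which gives $\ker \Ac \subseteq {\textnormal{span}}\{\e^{\i x}, \e^{-\i x}\}$; the reverse inclusion is immediate since $\Ac \e^{\pm \i x} = \alpha(0, \pm k_c) \e^{\pm \i x} = 0$.

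There is essentially no serious obstacle here — the lemma is a direct bookkeeping consequence of the spectral decomposition already established and of the three cited assumptions. The only point requiring a line of care is the claim that membership in $\ker\Ac$ can be tested coordinatewise, which is legitimate because $\F$ is unitary (Plancherel) and a diagonal operator on $l_2$ has kernel equal to the closed span of the basis vectors on which it vanishes; one should note that the index set $\{j : \alpha(0, jk_c) = 0\} = \{\pm 1\}$ is finite, so no closure subtleties arise and the kernel is genuinely two-dimensional. I would also remark in passing that $\e^{\i x}$ and $\e^{-\i x}$ are linearly independent in $L^2_{2\pi}(\R)$, confirming $\dim \ker \Ac = 2$.
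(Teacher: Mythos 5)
Your proposal is correct and follows essentially the same route as the paper: diagonalize $\Ac$ on the Fourier basis $\{\e^{\i jx}\}_{j\in\Z}$, observe $\Ac \e^{\i jx}=\alpha(0,jk_c)\e^{\i jx}$, and use \eqref{assum:spectrum_intersects_zero_ii} together with \eqref{assum:alpha_doesnt_have_other_zeroes} to conclude that only the modes $j=\pm1$ survive. Your explicit check of $j=0$ (giving $\alpha(0,0)=-m\neq0$) and the remark on coordinatewise testing via Plancherel are harmless extras already implicit in the paper's argument.
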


\begin{proof}
By \eqref{assum:assumptions_basic} and \eqref{assum:spectrum_intersects_zero_ii} one easily concludes $\{\alpha(0,\pm k_c) = 0\} \in \sigma(\Ac)$. Moreover, the following equalities hold
\[
	a\ast \e^{\pm \i k_c x} = \e^{\pm \i k_c x} \hat{a}(\pm k_c), \qquad a_k\ast \e^{\pm \i x} = a\ast \e^{\pm \i kx}.
\]
Hence, we obtain $\Ac \e^{\pm \i x} = \e^{\pm \i x} \alpha(0, \pm k_c) = 0$. Thus $\e^{\i x},\e^{-\i x}$ are eigenvectors for the eigenvalue $\la=0$. If $f\in \ker \Ac$, then we find
\[
	0 = \Ac f = \Ac \sum_{j\in\Z} (f,\e^{\i jx}) \e^{\i jx} = \sum_{j\in\Z} \alpha(0,jk_c) (f,\e^{\i jx}) \e^{\i jx}.
\]
So our assumption \eqref{assum:alpha_doesnt_have_other_zeroes} yields $(f,\e^{\i jx})=0$ for $j\neq \pm1$.
Thus $f\in {\textnormal{span}}\{\e^{\i x},\e^{-\i x}\}$ follows. As a result, \eqref{eq:spectrum_Ac_is_2d} holds, which finishes the proof.
\end{proof}

In order to prove existence of non-constant solutions to \eqref{eq:basic} we also need the assumption
\begin{assum}\label{assum:omega_non_zero}
	\omega:=  (\kam)^2 \widehat{a^-}(k_c) \Big( \frac{\widehat{a^-}(k_c)+\widehat{a^-}(2k_c)}{\alpha(0,2k_c)} + \frac{2+2\widehat{a^-}(k_c)}{\alpha(0,0)} \Big) > 0.
\end{assum}
This last assumption is not yet transparent but it will be interpreted below as a local solvability condition to obtain a real 
branch of non-trivial solutions; see equation~\eqref{eq:periodic_solution}.

\begin{rem}
\label{rem:kam_unimportant}
Note that \eqref{assum:assumptions_basic}-\eqref{assum:omega_non_zero} are independent of $\kam$, which is natural, since the linearization $\Aek$ does not depend on $\kam$ (c.f. \eqref{eq:basic_shifted_scaled}).
\end{rem}


\section{Existence of periodic solutions} \label{sec:existence}

The following theorem states that under \eqref{assum:assumptions_basic}--\eqref{assum:omega_non_zero} there exist a steady-state bifurcation of $u\equiv\theta$:
 
\begin{thm}
\label{thm:steady_state_bifurcation}
  Let \eqref{assum:assumptions_basic}--\eqref{assum:omega_non_zero} hold. Then there exists $\eps_0>0$, such that for all $\eps\in(0,\eps_0]$ and all $\delta$ that satisfy \eqref{assum:delta_less_eps} there exists a $\frac{2\pi}{k_c+\delta}$-periodic solution to \eqref{eq:basic} (where $\kap$, $\kam$ are replaced by $\kape$, $\kame$) with the leading expansion of the form
  \begin{equation}
	\label{eq:periodic_solution}
	u_{\eps,\delta}(x) = \theta + 2\sqrt{\frac{ \Omega(\eps,\delta)}{\omega}} \cos((k_c+\delta)x) + o(|\eps|^{\frac12} + |\delta|), \quad \delta\to0,\ \eps\to0,
  \end{equation}
	where $\Omega$ is defined in \eqref{eq:alpha_series_decomposition}, $\omega$ is defined in \eqref{assum:omega_non_zero}, and $k_c$ satisfies \eqref{assum:spectrum_intersects_zero_ii}-\eqref{assum:omega_non_zero}.
\end{thm}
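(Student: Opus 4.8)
The plan is to apply the Lyapunov--Schmidt reduction to the map $F(v,\eps,k)$ defined in \eqref{eq:basic_shifted_scaled}, viewed as a smooth map from a neighborhood of $(0,0,k_c)$ in $L^2_{2\pi}(\R)\times\R\times\R$ into $L^2_{2\pi}(\R)$. By Proposition~\ref{prop:w_a_w_in_l2p} together with \eqref{assum:assumptions_basic} the nonlinear term $R(v,\eps,k)=-\kame\, v\,(a^-_k\ast v)$ indeed lands in $L^2_{2\pi}(\R)$ and is smooth (quadratic) in $v$, so $F$ is well-defined and $C^2$. Since $F(0,\eps,k)=0$ for all $(\eps,k)$ we are bifurcating from the trivial branch. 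The linearization $D_vF(0,0,k_c)=\Ac$ is, by Lemma~\ref{lem:spectrum_Ac_is_2d}, Fredholm of index zero with $\ker\Ac=\operatorname{span}\{\e^{\i x},\e^{-\i x}\}$ and, being self-adjoint, $\operatorname{ran}\Ac=(\ker\Ac)^\perp$. Let $P$ be the orthogonal projection onto $\ker\Ac$ and write $v = z + \psi$ with $z\in\ker\Ac$, $\psi\in(\ker\Ac)^\perp$. The equation $F=0$ splits into the \emph{auxiliary equation} $(I-P)F(z+\psi,\eps,k)=0$ and the \emph{bifurcation equation} $PF(z+\psi,\eps,k)=0$.

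First I would solve the auxiliary equation. Since $D_\psi\bigl[(I-P)F\bigr](0,0,k_c) = (I-P)\Ac = \Ac|_{(\ker\Ac)^\perp}$ is a bounded bijection from $(\ker\Ac)^\perp$ onto itself, the implicit function theorem yields a $C^2$ map $\psi = \psi(z,\eps,k)$, defined for small $(z,\eps,k-k_c)$, with $\psi(0,\eps,k)=0$ and $\psi(z,\eps,k) = \cO(|z|^2)$ because the only nonlinearity is quadratic and $(I-P)D_zF(0,0,k_c)=0$. Substituting into the bifurcation equation and parametrizing $z = r\e^{\i x} + \bar r\,\e^{-\i x}$ reduces the problem to two (complex-conjugate) scalar equations in $(r,\eps,\delta)$ with $k=k_c+\delta$. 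Exploiting the $O(2)$ equivariance (translation $x\mapsto x+s$ acts on $r$ by $\e^{\i s}$, reflection by complex conjugation), one restricts to $r$ real and obtains a single reduced equation $g(r,\eps,\delta)=0$ of the form $r\cdot h(r,\eps,\delta)=0$, discarding the trivial solution $r=0$.

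The heart of the argument is then to compute the Taylor expansion of $h$ and solve $h=0$ for $r$. The linear-in-$r$ part of $PF$ contributes the factor $\alpha(\eps,k_c+\delta) = \Omega(\eps,\delta) + o(|\eps|+\delta^2)$ by the expansion \eqref{eq:alpha_series_decomposition}. The cubic-in-$r$ contribution to $g$ comes from feeding the quadratic correction $\psi = \psi(z,0,k_c) + \text{h.o.t.}$ back through the quadratic nonlinearity $R$; this is exactly where the second harmonic $\e^{\pm 2\i x}$ (coefficient involving $\alpha(0,2k_c)^{-1}$) and the zeroth mode (coefficient involving $\alpha(0,0)^{-1}$) appear, producing the constant $\omega$ of \eqref{assum:omega_non_zero}. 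Thus $h(r,\eps,\delta) = \Omega(\eps,\delta) - \tfrac{\omega}{4}\,r^2 + o(|\eps|+\delta^2)$ (up to a harmless positive normalization of $r$), and since $\omega>0$ and $\Omega(\eps,\delta)>0$ under \eqref{assum:delta_less_eps}, the implicit function theorem (or a direct scaling $r = \sqrt{\Omega/\omega}\,(1+o(1))$, legitimate because $\partial_r h \neq 0$ at the nonzero root) gives a solution $r=r(\eps,\delta) = \sqrt{\Omega(\eps,\delta)/\omega}\,(1 + o(1))$. Unscaling $v\mapsto w\mapsto u$ and recalling $u = \theta + w$, $w(x) = v((k_c+\delta)x)$, $z = 2\operatorname{Re}(r\e^{\i x})$ produces \eqref{eq:periodic_solution}.

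The main obstacle I anticipate is the bookkeeping in the cubic coefficient: one must expand $\psi(z,0,k_c)$ to order $|z|^2$ by solving $(I-P)\Ac\psi = -(I-P)R(z,0,k_c) + \cO(|z|^3)$ explicitly on the Fourier side — $R(z,0,k_c)$ with $z=r\e^{\i x}+\bar r\e^{-\i x}$ generates modes $\e^{0},\e^{\pm 2\i x}$ with coefficients built from $\widehat{a^-}(0)=1$, $\widehat{a^-}(k_c)$, $\widehat{a^-}(2k_c)$ — invert $\Ac$ on those modes (dividing by $\alpha(0,0)$ and $\alpha(0,2k_c)$, which are nonzero by \eqref{assum:alpha_doesnt_have_other_zeroes}), then substitute back into $PR(z+\psi,0,k_c)$ and collect the coefficient of $r^2\bar r\,\e^{\i x}$. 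Verifying that this coefficient equals precisely $-\omega/4$ (so that \eqref{assum:omega_non_zero} is exactly the nondegeneracy/sign condition making the branch real and well-defined) is the only genuinely computational part; everything else is a standard application of Lyapunov--Schmidt with $O(2)$ symmetry. A secondary technical point is to record uniformity of $\eps_0$ and of the error terms over all $\delta$ satisfying \eqref{assum:delta_less_eps}, which follows from the $C^2$-dependence of all constructions on $(\eps,\delta)$ on a fixed neighborhood.
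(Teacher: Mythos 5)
Your proposal follows essentially the same route as the paper: Lyapunov--Schmidt with the splitting $L^2_{2\pi}=\ker\Ac\oplus\mathrm{ran}\,\Ac$, the implicit function theorem for the auxiliary equation, the explicit inversion of $\Ac$ on the zeroth and second harmonics (producing the $\alpha(0,0)^{-1}$ and $\alpha(0,2k_c)^{-1}$ terms that build $\omega$), and the reduced equation $\Omega(\eps,\delta)-\omega|s|^2=o(|\eps|+\delta^2)$ solved under \eqref{assum:omega_non_zero} and \eqref{assum:delta_less_eps}, with the phase fixed by translation invariance. The only discrepancy is the harmless bookkeeping of the cubic coefficient ($\omega$ versus $\omega/4$, depending on whether the amplitude variable is $s$ or $2s$), which you flagged yourself and which does not affect the final expansion \eqref{eq:periodic_solution}.
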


We will follow a similar strategy as in \cite{Kie2012} and apply the Lyapunov--Schmidt reduction method in order to give a proof of Theorem~\ref{thm:steady_state_bifurcation}. 

\begin{proof}
We study bifurcation from the trivial solution branch $\{(0,\eps,k)\, \vert\, \eps, k\in\R\}$ for 
\[
	F(v,\eps,k)=0, \quad  v\in X=L_{2\pi}^2(\R),
\]
where $F$ is given by \eqref{eq:basic_shifted_scaled}. Under \eqref{assum:assumptions_basic}--\eqref{assum:delta_less_eps}, consider $\Ac = \txtD_vF(0,0,k_c):X\to X$. Then we clearly have the splitting
\begin{equation}
\label{eq:space_decomposition}
	X = L^2_{2\pi}(\R) = \ker (\Ac) \oplus {\textnormal{ran}}(\Ac) = {\textnormal{span}}\{\e^{\i x}, \e^{-\i x}\} \oplus {\textnormal{ran}}(\Ac). 
\end{equation}
Hence $F(v,\eps,k)=0$ is equivalent to  
\begin{align}
	P F(y + \psi, \eps, k_c+\delta) &= 0, \quad y = Pv,\ \psi = (1-P)v, \label{eq:reduced} \\
 (1-P)F(y+\psi,\eps, k_c+\delta) &=0	, \label{eq:implicit}
\end{align}
where $k=k_c+\delta$ and $P$ is the projection on $\ker (\Ac)$. The idea of the proof is to find $\psi=\psi(y,\eps,\delta)$, which satisfies \eqref{eq:implicit}, then put it in \eqref{eq:reduced} and find $y=y(\eps,\delta)$, which satisfies \eqref{eq:reduced}. Finally, we are going to obtain that 
\[
v(\eps,\delta) = y(\eps,\delta) + \psi(y(\eps,\delta),\eps,\delta) 
\]
will be a solution to \eqref{eq:basic_shifted_scaled}. By~\eqref{assum:assumptions_basic} we have a twice-differentiable mapping 
\[
(v,\eps,k) \mapsto F(v,\eps,k),\qquad F\in C^{2}(X\times(-1,\infty)\times(0,\infty)\to \R). 
\]
Since we can just calculate
\[
	\txtD_\psi((1-P)F)(0,0,k_c) = (1-P)\txtD_\psi F(0,0,k_c) = (1-P)\Ac,
\]
and $(1-P)\Ac:{\textnormal{ran}}(\Ac)\to {\textnormal{ran}}(\Ac)$ is a linear homeomorphism, we may apply the Implicit Function Theorem locally. More precisely, there exist an open $U\subset \ker (\Ac)$ with $\{0\}\in U$, $\delta_0>0$, $\eps_0>0$, and 
\begin{equation}
\label{eq:psi_smooth}
	\psi=\psi(y,\eps,\delta) \in C^{2}(U\times(-\eps_0,\eps_0)\times(-\delta_0,\delta_0)\to {\textnormal{ran}}(\Ac)),
\end{equation}
such that \eqref{eq:implicit} holds and $\psi(0,0,0)=0$. Note that, upon possibly redefining $\eps_0,\delta_0$, we get from~\eqref{assum:alpha_doesnt_have_other_zeroes} and $\alpha(\eps,\pm\infty)=-\kape<0$ that
\begin{equation}\label{eq:perturbed_A3}
	\alpha(\eps,j(k_c+\delta)) \neq 0, \quad \eps\in(-\eps_0,\eps_0),\ \delta\in(-\delta_0,\delta_0),\ \Z\ni j\neq \pm1.
\end{equation}
Therefore, $\Aek \e^{\i jx} = \alpha(\eps,j(k_c+\delta)) \e^{\i jx} \neq 0$ for $k=k_c+\delta$ and $j\neq\pm1$, which implies that $\Aek$ is a linear diffeomorphism on ${\textnormal{ran}}(\Ac)$ and 
\begin{equation}\label{eq:Aek_inverse}
	(\Aek^{-1} f)(x) = \sum_{\Z \ni j \neq \pm1} \frac{f_j}{\alpha(\eps,jk)} \e^{\i jx},\ \  f(x)=\sum\limits_{\Z\ni j\neq \pm1} f_j\e^{\i jx} \in {\textnormal{ran}}(\Ac).  
\end{equation}
Next, by \eqref{eq:psi_smooth}, $\psi$ satisfies the following expansion in $y$ for $|\eps|<\eps_0$, $|\delta|<\delta_0$,
\begin{equation}\label{eq:periodic_solution_expansion}
	\psi(y,\eps,\delta) = \sum_{l=0}^{2} G_{l}(y,\eps,\delta) + o(\|y\|^2), \quad \|y\| \to 0, 
\end{equation}
where $G_{l}:\ker (\Ac)^l\times(-\eps_0,\eps_0)\times(-\delta_0,\delta_0) \to {\textnormal{ran}}(\Ac)$ are $l$-linear forms with respect to the first argument and belong to the class $C^2$ with respect to $(\eps,\delta)$. We have to compute $G_{l}$ for $l=0,1,2$ as defined in \eqref{eq:periodic_solution_expansion}. Collecting zero-forms with respect to $y$ in \eqref{eq:implicit} we obtain
\begin{equation}\label{eq:G_0}
	\mathcal{A}_{\eps,k_c+\delta} G_0(\eps,\delta) = \kame (1-P)G_0(\eps,\delta) a^-_{k_c+\delta}\ast G_0(\eps,\delta).
\end{equation}
Since $\psi(0,0,0)=0$ and \eqref{eq:psi_smooth} holds we see that 
\[
	G_0(0,0)=0, \quad G_0\in C^2((-\eps_0,\eps_0)\times(-\delta_0,\delta_0)).
\]
Therefore, by \eqref{eq:Aek_inverse} and the Implicit Function Theorem, there exists a unique $C^2$-solution to \eqref{eq:G_0} in ${\textnormal{ran}}(\Ac)$, which equals $0$ at $\eps=0, \delta=0$. Obviously $G_0\equiv0$ satisfies these conditions so we proceed to the next order, i.e., we collect one-forms with respect to $y$ in \eqref{eq:implicit}. A direct calculation yields
\[
 (1-P)\mathcal{A}_{\eps,k_c+\delta} G_1(y,\eps,\delta) = 0, \quad y\in \ker (\Ac).
\]
Since ${\textnormal{ran}}(G_1)\subset {\textnormal{ran}}(\Ac)$, then by \eqref{eq:Aek_inverse} it also follows that $G_1 \equiv 0$. Going to the next order, we collect two-forms in \eqref{eq:implicit} and obtain
\begin{equation*}
	(1-P) \mathcal{A}_{\eps,k_c+\delta} G_2(y,\eps,\delta) = (1-P)\kame y(a^-_{k_c+\delta}\ast  y), \quad y\in \ker (\Ac).
\end{equation*}
Since, for all $y\in \ker (\Ac)$, $y a^-_{k_c+\delta}\ast y \in {\textnormal{ran}}(\Ac)$, ${\textnormal{ran}}(G_2) \subset {\textnormal{ran}}(\Ac)$ and \eqref{eq:Aek_inverse} hold, we get  
\begin{equation}\label{eq:G2}
	G_2(y,\eps,\delta) = \kame \mathcal{A}_{\eps,k_c+\delta}^{-1} [y(a^-_{k_c+\delta}\ast  y)], \quad y\in \ker (\Ac).
\end{equation}
We are looking for real-valued stationary solutions to \eqref{eq:basic_shifted_scaled}. Therefore, it is straightforward to check that if $v=y+\psi$ is real-valued, then $y$ and $\psi$ are real-valued and $y=s\e^{\i x}+\bar{s}\e^{-\i x}$ for some $s\in\C$; see also Remark~\ref{rem:real_valued_functions} below. Next, for $y=s \e^{\i x} +\bar{s} \e^{-\i x}$ we can actually write out the nonlinear quadratic term
\begin{equation}\label{eq:G2_rhs}
	ya^-_{k_c+\delta}\ast  y = \widehat{a^-}(k_c+\delta) (s^2 \e^{2\i x} + 2|s|^2 + \bar{s}^2 \e^{-2\i x}).
\end{equation}
We have by \eqref{eq:Aek_inverse}, \eqref{eq:G2} and \eqref{eq:G2_rhs}, that $(G_2,\e^{\i jx}) = 0$ for $j\not\in \{\pm2, 0\}$, and 
furthermore
\[
G_2 = g_0 |s|^2 + g_2 (s^2 \e^{2\i x} + \bar{s}^2 \e^{-2\i x}), 
\]
where the coefficients are given by
\[
	g_2(\eps,\delta) = \frac{\kame \widehat{a^-} (k_c+\delta)}{\alpha(\eps,2(k_c+\delta))}, \quad g_0(\eps,\delta) = \frac{2\kame \widehat{a^-} (k_c+\delta)}{\alpha(\eps,0)}. 
\]
Hence, by \eqref{eq:periodic_solution_expansion}, any real-valued solution $v=y+\psi$ to \eqref{eq:basic_shifted_scaled} has the following form
\begin{equation}\label{eq:very_explicite_v}
	v(x) = s\e^{\i x} + \bar{s}\e^{-\i x} + g_2 s^2 \e^{2\i x} + g_0|s|^2 + g_{2}\bar{s}^{2} \e^{-2\i x} + o(|s|^2),\quad |s|\to0.
\end{equation}
Note that, by \eqref{eq:very_explicite_v}, $P \mathcal{A}_{\eps,k_c+\delta} v = \alpha(\eps,k_c+\delta)(s\e^{\i x}+\bar{s}\e^{-\i x})$ and, as $|s|\to0$,
\[
	Pva_{k_c+\delta}^-\ast v = [g_0(1+\widehat{a^-}(k_c+\delta)) + g_2 (\widehat{a^-}(k_c+\delta) + \widehat{a^-}(2(k_c+\delta)))] |s|^2 (s\e^{\i x}+\bar{s}\e^{-\i x}) + o(|s|^3). 
\]
Therefore, by \eqref{eq:alpha_series_decomposition} and since $\widehat{a^-}(k_c+\delta)= \widehat{a^-}(k_c)+\frac{1}{2}\partial^2_{k} \widehat{a^-}(k_c) \delta^2$ as $\delta\to0$, the substitution of $v$ given by \eqref{eq:very_explicite_v} into \eqref{eq:reduced} will imply the following reduced equation,
\begin{equation*}
	(\Omega(\eps,\delta)-\omega |s|^2)(s\e^{\i x}+\bar{s}\e^{-\i x}) = o(|s|^3+|\eps|+\delta^2), \quad |s|^3+|\eps|+|\delta|\to0,
\end{equation*}
where we used that 
\[
	|s|^3(\eps+\delta^2)=o(|s|^3+|\eps|+\delta^2), \quad |s|^3+|\eps|+\delta^2 \to 0,
\]
and the following asymptotic expansion, as $|\eps|+\delta^2\to0$,
\[
	\kame[g_0(1+\widehat{a^-}(k_c+\delta)) + g_2 (\widehat{a^-}(k_c+\delta) + \widehat{a^-}(2(k_c+\delta)))]\sim \omega + \cO (|\eps|+\delta^2).
\]
By \eqref{assum:omega_non_zero}, the Implicit Function Theorem implies for $|\eps| < \eps_0$ and $|\delta| < \delta_0$ (again possibly redefining $\eps_0$, $\delta_0$) that there exists $s=s(\eps,\delta)$ such that $v$ given by \eqref{eq:very_explicite_v} satisfies \eqref{eq:reduced}. As a result we obtain $v$ satisfies \eqref{eq:basic_shifted_scaled}), $s(0,0)=0$ and
\[
	 |s|^2 = \frac{\Omega(\eps,\delta)}{\omega} + o(|\eps|+\delta^2), \quad |\eps|+\delta^2 \to 0. 
\]
By \eqref{assum:delta_less_eps} and \eqref{assum:omega_non_zero} we also see that $|s|$ is real.
Let $s=|s|\e^{\i\phi}$ for some $\phi\in[0,2\pi)$.
	Then, by \eqref{eq:very_explicite_v} and \eqref{assum:omega_non_zero}, we can conclude that 
\[
	v(x) = 2|s| \cos(x+\phi) + o(|s|), \qquad |s|\to0.
\]
Since the semiflow which corresponds to \eqref{eq:basic} and the shift operator commute, then wlog we may assume $\phi=0$.  Hence \eqref{eq:periodic_solution} holds. 
\end{proof}

\begin{rem}\label{rem:real_valued_functions}
If  $\psi\in {\textnormal{ran}}(\Ac)$, then 
\[
	\bar{\psi}:= \Re( \psi) - i\Im( \psi) = \sum_{\Z\ni j\neq \pm 1} \overline{(\psi,\e^{\i jx})}\e^{-\i jx} = \sum_{\Z\ni j\neq \pm 1}(\e^{\i jx},\psi) \e^{-\i jx} \in {\textnormal{ran}} (\Ac).
\]
Hence, $\Re(\psi), \Im(\psi) \in {\textnormal{ran}}(\Ac)\perp \ker (\Ac)$.
Therefore $\Im(y+\psi) =0$, for $y\in \ker (\Ac)$, $\psi\in {\textnormal{ran}}(\Ac)$ implies $\Im(y) = \Im(\psi) =0$.
As a result, for $y=y(s,t) = s\e^{\i x} + t\e^{-\i x}$, $\Im(y)=0$ implies $s = \bar{t}$.
\end{rem}


\section{Stability of periodic solutions} \label{sec:stability}

Although we have shown the existence of non-trivial stationary solutions, we also would like to check whether one actually observe these solutions as long-time limit of the evolution problem. Therefore, we are going to study stability of the periodic solutions $u_{\eps,\delta}$ obtained in Theorem \ref{thm:steady_state_bifurcation} for small values of $\eps$ and $\delta$. Since $u_{\eps,\delta}(\cdot-h)$ solves \eqref{eq:basic} for any $h\in[0,2\pi)$ we consider $v_{\eps,\delta}(\cdot) = u_{\eps,\delta}(\frac{\cdot}{k_c+\delta})-\theta$ only in the following subspace of $L^2_{2\pi}(\R)$ (cf.~\eqref{eq:space_decomposition})
\begin{equation}\label{eq:space_of_perturbations}
	Y = \{l \cos(x):l\in\R\} \oplus {\textnormal{ran}} \Ac \subset L^2_{2\pi}(\R),
\end{equation}
where we reduce $\ker  \Ac = \{s \e^{\i x} + t \e^{-\i x}\}$ to its subspace with $t=\bar{s}$ and $arg(s)=0$. In other words $Y$ is chosen such that the phase of $u_{\eps,\delta}$ is fixed, namely, there exists unique $h\in[0,2\pi)$, such that $u_{\eps,\delta}(\cdot-h)\in Y$ (in fact $h=0$). The main result of the section is the following theorem.

\begin{thm}
\label{thm:lp_stability}
Let conditions of Theorem \ref{thm:steady_state_bifurcation} hold and let $u_{\eps,\delta}$ be the corresponding periodic solution to \eqref{eq:basic}. Suppose also 
\begin{equation}
\label{eq:ccstab}
	\lim_{\tilde{x}\to0} \int[\R] |a^\pm(\tilde{x}+x)-a^\pm(x)| ~\txtd x =0.
\end{equation}
Then there exists $\eps_0>0$ such that, for any fixed $\eps\in(0,\eps_0)$ and $\delta$ satisfying \eqref{assum:delta_less_eps}, $u_{\eps,\delta}$ is (locally) asymptotically stable in $\{u\,\vert u(\frac{\cdot}{k_c+\delta})\in Y\}$, where $Y$ is defined by \eqref{eq:space_of_perturbations} and $k_c$ is given by \eqref{assum:spectrum_intersects_zero_ii}--\eqref{assum:omega_non_zero}.
\end{thm}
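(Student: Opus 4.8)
The plan is to establish asymptotic stability by the principle of linearized stability for the evolution equation behind~\eqref{eq:basic} (with $\kap,\kam$ replaced by $\kape,\kame$). First I would rescale to the fixed period $2\pi$ exactly as in~\eqref{eq:basic_shifted_scaled}: with $v_{\eps,\delta}(x)=u_{\eps,\delta}(\tfrac{x}{k_c+\delta})-\theta$ and a perturbation written as $\phi(x)=(u-u_{\eps,\delta})(\tfrac{x}{k_c+\delta})$, one gets $\partial_t\phi=\Led\phi-\kame\,\phi\,(a^-_{k_c+\delta}\ast\phi)$ where
\[
	\Led:=\txtD_vF(v_{\eps,\delta},\eps,k_c+\delta), \qquad \Led\phi=\mathcal{A}_{\eps,k_c+\delta}\phi-\kame\,\phi\,(a^-_{k_c+\delta}\ast v_{\eps,\delta})-\kame\,v_{\eps,\delta}\,(a^-_{k_c+\delta}\ast\phi).
\]
Since $\mathcal{A}_{\eps,k_c+\delta}$ is bounded and, by Proposition~\ref{prop:w_a_w_in_l2p}, $\phi\mapsto\phi(a^-_{k_c+\delta}\ast\phi)$ is a bounded quadratic map on $L^2_{2\pi}(\R)$, the right-hand side is a smooth perturbation of a bounded linear operator, so it generates a smooth local semiflow, and the theorem reduces to locating $\sigma(\Led)$. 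Condition~\eqref{eq:ccstab} is what makes the convolution operators and $h\mapsto a^\pm(\cdot-h)$ depend continuously on their arguments, which is needed for this well-posedness step and for the asymptotic-phase statement below.

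Next I would analyse $\sigma(\Led)$ by perturbing from $\mathcal{L}_{0,0}=\Ac$. By Lemma~\ref{lem:spectrum_Ac_is_2d}, $\ker\Ac={\textnormal{span}}\{\e^{\i x},\e^{-\i x}\}$; moreover $\alpha(0,jk_c)\to-\kape<0$ and, by~\eqref{assum:alpha_doesnt_have_other_zeroes} together with the inclusion $\sigma(\mathcal{A}_{-\eps,k_c+\delta})\subset(-\infty,0)$ in~\eqref{assum:spectrum_intersects_zero}, all remaining eigenvalues $\alpha(0,jk_c)$, $j\neq\pm1$, are negative, so the self-adjoint operator $\Ac$ has a spectral gap: $\sigma(\Ac)\setminus\{0\}\subset(-\infty,-\eta]$ for some $\eta>0$. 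For $(\eps,\delta)$ near $0$ I would write $\Led=\mathcal{A}_{\eps,k_c+\delta}+\mathcal{B}_{\eps,\delta}$ with $\|\mathcal{B}_{\eps,\delta}\|=O(\|v_{\eps,\delta}\|_\infty)=O(\Omega(\eps,\delta)^{1/2})$ by Theorem~\ref{thm:steady_state_bifurcation}. Using the splitting~\eqref{eq:space_decomposition}: on ${\textnormal{ran}}\,\Ac$ the block $(1-P)\Led(1-P)$ stays, uniformly in small $(\eps,\delta)$, within $O(\eps^{1/2})$ of $(1-P)\Ac(1-P)$, whose spectrum lies in $(-\infty,-\eta]$, so the part of $\sigma(\Led)$ outside a small disc around $0$ remains in $\{\Re z\le-\eta/2\}$. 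It then remains to track the two eigenvalues bifurcating from the double eigenvalue $0$ of $\Ac$.

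For these I would run a Lyapunov--Schmidt reduction of $\Led\phi=\mu\phi$ for small $|\mu|$ (solve the $(1-P)$-part for $(1-P)\phi$ using invertibility of $(1-P)\Led(1-P)-\mu$, then substitute into the $P$-part), obtaining a $2\times2$ matrix $\mathcal{M}(\mu,\eps,\delta)$, analytic in $\mu$, of the form $\alpha(\eps,k_c+\delta)\,I+O(\|v_{\eps,\delta}\|_\infty^2)+O(\mu\|v_{\eps,\delta}\|_\infty)$, whose two eigenvalues (solving $\det(\mathcal{M}(\mu,\eps,\delta)-\mu I)=0$) are the critical spectrum. Translation invariance gives one for free: differentiating $v_{\eps,\delta}(\cdot-h)$ at $h=0$ shows $\Led v'_{\eps,\delta}=0$, and since~\eqref{eq:very_explicite_v} together with reflection equivariance of $F$ makes $v_{\eps,\delta}$ even (only $1,\cos x,\cos2x,\dots$ occur), $v'_{\eps,\delta}$ is odd. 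As $\Led$ commutes with $\phi\mapsto\phi(-\cdot)$ (since $a^\pm$ and $v_{\eps,\delta}$ are even), I can diagonalise over even/odd functions: the odd critical eigenvalue is exactly $0$, and the even one is an isolated simple eigenvalue $\mu_\ast(\eps,\delta)$ of a real operator, hence real. Identifying $\mathcal{M}$ with the linearisation of the reduced bifurcation map from the proof of Theorem~\ref{thm:steady_state_bifurcation} — whose $O(2)$-equivariant normal form is $\dot s=(\Omega(\eps,\delta)-\omega|s|^2)s+\text{h.o.t.}$ with the bifurcating branch at $|s|^2=\Omega(\eps,\delta)/\omega+o(\cdot)$ — I expect
\[
	\mu_\ast(\eps,\delta)=\Omega(\eps,\delta)-3\omega\cdot\tfrac{\Omega(\eps,\delta)}{\omega}+o(\Omega(\eps,\delta))=-2\,\Omega(\eps,\delta)+o(\Omega(\eps,\delta)),\qquad \eps\to0.
\]
By~\eqref{assum:delta_less_eps}, $\Omega(\eps,\delta)>0$, so $\mu_\ast<0$; hence $\sigma(\Led)\cap\{\Re z\ge0\}=\{0\}$, with $0$ simple and eigenfunction $v'_{\eps,\delta}\notin Y$, and the rest of $\sigma(\Led)$ lies in $\{\Re z\le-c(\eps,\delta)\}$ with $c(\eps,\delta)=\min\{2\Omega(\eps,\delta)+o(\Omega),\ \eta/2\}>0$.

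Finally I would invoke the principle of linearized stability. On the $\Led$-invariant complement of $\ker\Led=\R v'_{\eps,\delta}$ the semigroup $\e^{t\Led}$ decays at rate $c(\eps,\delta)$ (for a bounded generator the growth bound equals the spectral bound), so the orbit $\{u_{\eps,\delta}(\cdot-h)\}_h$ is asymptotically stable with asymptotic phase; and data in $\{u:u(\tfrac{\cdot}{k_c+\delta})\in Y\}$ have vanishing $\sin x$-component, i.e. lie on the reflection-symmetric (``amplitude'') slice of the centre manifold, which by reflection equivariance is flow-invariant and whose only nearby equilibrium is $v_{\eps,\delta}$ itself; hence the asymptotic phase is $0$ and $u_{\eps,\delta}$ is locally asymptotically stable in that set. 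The main obstacle I anticipate is the eigenvalue sign: one must push the Lyapunov--Schmidt reduction of the eigenvalue problem to order $\|v_{\eps,\delta}\|^2=O(\Omega)$ and verify that the quadratic coefficient is exactly three times the branch coefficient $\omega$ of~\eqref{assum:omega_non_zero}, giving $\mu_\ast=-2\Omega<0$; this is precisely the supercriticality encoded in $\omega>0$ and~\eqref{assum:delta_less_eps}. Uniform control of the non-critical spectrum, reality of the critical eigenvalues, and the standard linearized-stability and asymptotic-phase machinery (legitimate because the nonlinearity is quadratic and bounded on $L^2_{2\pi}(\R)$ by Proposition~\ref{prop:w_a_w_in_l2p}) are routine.
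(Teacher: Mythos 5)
Your proposal is correct in substance and follows the same backbone as the paper's proof: localize the non-critical spectrum of $L=\txtD_vF(v_{\eps,\delta},\eps,k_c+\delta)$ in the left half-plane, then perform a Lyapunov--Schmidt reduction of the eigenvalue problem over $\ker\Ac$ and show that the critical (amplitude) eigenvalue is negative of order $\Omega(\eps,\delta)$, the translation zero mode being removed by the choice of $Y$. The routes differ in two places. For the non-critical spectrum the paper first computes the \emph{essential} spectrum using compactness of $h\mapsto a^\pm\ast h$ (Lemma~\ref{lem:l1_convol_compact_in_l2per}; this is where \eqref{eq:ccstab} is actually used) together with Weyl's theorem, and then confines possible new discrete eigenvalues by a Daleckii--Krein perturbation bound; you instead treat $L$ as a norm-perturbation of size $\cO(\|v_{\eps,\delta}\|_\infty)=\cO(\sqrt{\Omega})$ of the self-adjoint multiplier $\mathcal{A}_{\eps,k_c+\delta}$, which gives the same localization more directly (and makes \eqref{eq:ccstab} essentially superfluous for the spectral step --- note your stated use of \eqref{eq:ccstab}, well-posedness, is not where the paper needs it). For the critical eigenvalue you appeal to the reduced-stability/exchange-of-stability principle and predict $\mu_\ast=-2\Omega+o(\Omega)$, whereas the paper carries the reduction out explicitly and reports $\la\sim-\tfrac32\Omega$. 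Be aware that this explicit second-order computation is the real content of the proof and cannot be waved through as routine: at order $\Omega$ one must keep \emph{both} the coupling of $y=l\cos x$ with the correction $G_1(y,\eps,\delta)$ (the $\tfrac{1}{\alpha(0,0)}$ and $\tfrac{\cos 2x}{\alpha(0,2k_c)}$ contributions) \emph{and} the coupling of $y$ with the $\cO(\Omega)$ mean and second-harmonic components of $v_{\eps,\delta}$ itself; doing so reproduces exactly the combination defining $\omega$ in \eqref{assum:omega_non_zero} with the factor three you anticipate, i.e.\ $\mu_\ast\sim-2\Omega$, while the paper's stated $-\tfrac32\Omega$ retains only part of these same-order terms. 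In either case the eigenvalue is negative, so the conclusion of the theorem is unaffected, but if you write your version up you should present the computation in full rather than cite it as expected.

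Two smaller caveats. First, $Y$ is not the even (reflection-symmetric) subspace: ${\textnormal{ran}}\,\Ac$ contains odd modes such as $\sin 2x$, so $Y$ only removes the $\sin x$ direction; your final step should therefore be phrased, as in the paper, in terms of the spectrum of $L$ relevant for perturbations in $Y$, not via invariance of an even slice. Second, your claim that $(1-P)L(1-P)$ stays uniformly within $\cO(\eps^{1/2})$ of $(1-P)\Ac(1-P)$ needs a two-regime argument in the Fourier index $j$ (continuity for finitely many $j$, and $\alpha(\eps,j(k_c+\delta))\to-\kape<0$ for large $|j|$), which is exactly what \eqref{eq:perturbed_A3} encodes; also the strict negativity of $\alpha(0,jk_c)$ for $j\neq\pm1$ must be drawn from \eqref{assum:spectrum_intersects_zero}, as you do, since \eqref{assum:alpha_doesnt_have_other_zeroes} alone only gives non-vanishing.
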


It is evident that the last result also applies to suitable shifts:

\begin{cor}\label{cor:stability_of_shifts}
	Under conditions of Theorem~\ref{thm:lp_stability}, for any $h\in[0,2\pi)$, $u_{\eps,\delta}(\cdot-h)$ is asymptotically stable in $\{u\,\vert u(\frac{\cdot}{k_c+\delta})\in Y_h\}$, where
	\[
		Y_h=\{ l \cos(x-h) :l\in\R\} \oplus {\textnormal{ran}} \Ac \subset L^2_{2\pi}(\R).
	\]
\end{cor}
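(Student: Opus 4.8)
The plan is to derive the corollary directly from Theorem~\ref{thm:lp_stability} by exploiting the translation invariance of equation~\eqref{eq:basic}. The key observation is that the left-hand side of~\eqref{eq:basic} commutes with the shift operator $(\tau_h f)(x) := f(x-h)$: since $a^\pm$ are fixed kernels and convolution commutes with translation, $\tau_h(a^\pm\ast u) = a^\pm\ast(\tau_h u)$, and the pointwise products and linear terms are obviously shift-equivariant. Consequently, if $u_{\eps,\delta}$ is a stationary solution, so is $\tau_h u_{\eps,\delta} = u_{\eps,\delta}(\cdot-h)$ for every $h\in[0,2\pi)$, and more importantly the whole evolution semiflow associated with~\eqref{eq:basic} satisfies $S(t)\tau_h = \tau_h S(t)$.

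The main steps are as follows. First I would make precise the notion of asymptotic stability being used in Theorem~\ref{thm:lp_stability}: there is a neighbourhood $\mathcal{N}$ of $u_{\eps,\delta}$ in $\{u \mid u(\tfrac{\cdot}{k_c+\delta})\in Y\}$ such that any solution starting in $\mathcal{N}$ stays close to $u_{\eps,\delta}$ and converges to it as $t\to\infty$. Second, I would note that the rescaling $u\mapsto u(\tfrac{\cdot}{k_c+\delta})$ intertwines the shift by $h$ in the $2\pi$-periodic picture with the shift by $\tfrac{h}{k_c+\delta}$ in the original variable, and that $\tau_h$ maps $\{u\mid u(\tfrac{\cdot}{k_c+\delta})\in Y\}$ bijectively and isometrically (in the relevant $L^2_{2\pi}$ norm after rescaling) onto $\{u\mid u(\tfrac{\cdot}{k_c+\delta})\in Y_h\}$, because applying $\tau_h$ in the rescaled variable sends $\{l\cos(x):l\in\R\}$ to $\{l\cos(x-h):l\in\R\}$ while fixing $\mathrm{ran}\,\Ac$ (each $\e^{\i jx}$ is merely multiplied by the unimodular constant $\e^{-\i jh}$, so the subspace is invariant). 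Third, I would transport the stability statement: given the neighbourhood $\mathcal{N}$ of $u_{\eps,\delta}$, the set $\tau_h\mathcal{N}$ is a neighbourhood of $\tau_h u_{\eps,\delta}$ in $\{u\mid u(\tfrac{\cdot}{k_c+\delta})\in Y_h\}$, and if $u(0)\in\tau_h\mathcal{N}$ then $\tau_{-h}u(0)\in\mathcal{N}$, so $S(t)\tau_{-h}u(0)\to u_{\eps,\delta}$; applying $\tau_h$ and using equivariance gives $S(t)u(0) = \tau_h S(t)\tau_{-h}u(0) \to \tau_h u_{\eps,\delta} = u_{\eps,\delta}(\cdot-h)$, with the same rate.

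I expect no serious obstacle here; the corollary is essentially a bookkeeping consequence of symmetry, and indeed the text of Theorem~\ref{thm:steady_state_bifurcation} already invokes precisely this commutation (``Since the semiflow which corresponds to~\eqref{eq:basic} and the shift operator commute''). The only point requiring a little care is checking that the subspace $Y_h$ is genuinely the image of $Y$ under $\tau_h$ — in particular that $\mathrm{ran}\,\Ac$ is shift-invariant, which follows from~\eqref{eq:Aek_inverse} together with the fact that translation acts diagonally on the Fourier basis $\{\e^{\i jx}\}$ — and that the phase normalization ($\arg(s)=0$) is likewise transported correctly, so that $\tau_h u_{\eps,\delta}$ is the unique element of its shift-orbit lying in $\{u\mid u(\tfrac{\cdot}{k_c+\delta})\in Y_h\}$. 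Once these identifications are in place the proof is a one-line transport argument.
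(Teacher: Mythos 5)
Your argument is correct and is precisely the reasoning the paper leaves implicit (the corollary is stated as ``evident''): shift-equivariance of the semiflow together with the identification $\tau_h Y = Y_h$, since translations act diagonally on the Fourier basis and hence fix ${\textnormal{ran}}(\Ac)$ while rotating the $\cos$-direction. Your extra care with the rescaling (shift by $h$ in the $2\pi$-periodic variable versus $h/(k_c+\delta)$ in the original one) only makes explicit a point the paper glosses over, so nothing further is needed.
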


We start with a lemma on compactness of the convolution operator, which also explains the reason for the condition~\eqref{eq:ccstab} in Theorem~\ref{thm:lp_stability}.

\begin{lem}
\label{lem:l1_convol_compact_in_l2per}
	Let $a\in L^1(\R\to\R_+)$ and 
	\begin{equation}\label{eq:kernel_is_shifts_cont}
		\lim_{\tilde{x}\to0} \int[\R] |a(\tilde{x}+x)-a(x)| ~\txtd x =0.
	\end{equation}
	Then the operator $Ah = a\ast h$ is compact in $L^2_k(\R)$, for any $k>0$. 
\end{lem}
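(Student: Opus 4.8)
The plan is to show that $A$ is a limit in operator norm of finite-rank operators, hence compact. The natural way to do this for a convolution operator with an $L^1$ kernel is to exploit the Fourier representation on $L^2_k(\R)$: after rescaling we may assume $k=2\pi$, and then $(\mathcal{F}(a\ast h))(j) = \widehat{a}(j)(\mathcal{F}h)(j)$ for $j\in\Z$. Since $a\in L^1(\R)$, the Riemann--Lebesgue lemma gives $\widehat{a}(j)\to 0$ as $|j|\to\infty$, so the multiplier sequence $\{\widehat{a}(j)\}_{j\in\Z}$ lies in $c_0(\Z)$. Truncating the multiplier to $|j|\le N$ yields a finite-rank operator $A_N$ with $\|A-A_N\|_{L^2_{2\pi}\to L^2_{2\pi}} = \sup_{|j|>N}|\widehat{a}(j)| \to 0$, and a norm-limit of finite-rank operators on a Hilbert space is compact. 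This argument, however, does not visibly use the hypothesis \eqref{eq:kernel_is_shifts_cont}, so I would instead (or additionally) present the more hands-on argument that the authors are evidently after, which does use it.

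The second approach is to verify the hypotheses of the Fréchet--Kolmogorov (Riesz--Kolmogorov) compactness criterion in $L^2([0,k])$, viewing $L^2_k(\R)$ isometrically as $L^2$ of the circle of circumference $k$. Take the closed unit ball $B\subset L^2_k(\R)$; I must show $A(B)$ is precompact. Boundedness of $A(B)$ in $L^2([0,k])$ is immediate from Young's inequality together with periodicity (or from Proposition~\ref{prop:w_a_w_in_l2p}-type estimates). The key point is equicontinuity of translations: for $h\in B$ and small $\tilde x$,
\[
	\|(a\ast h)(\tilde x+\cdot) - (a\ast h)(\cdot)\|_{L^2_k} = \|(a(\tilde x+\cdot)-a(\cdot))\ast h\|_{L^2_k} \le \|a(\tilde x+\cdot)-a(\cdot)\|_{L^1(\R)}\,\|h\|_{L^2_k},
\]
where the last inequality is again Young's inequality on the line (convolving the $L^1$ function $a(\tilde x+\cdot)-a(\cdot)$ with the $L^2_k$ function $h$; one has to be a little careful that Young's inequality $\|g\ast h\|_{L^2_k}\le\|g\|_{L^1(\R)}\|h\|_{L^2_k}$ holds in this periodic setting, which follows by the same splitting over fundamental domains $[jk,(j+1)k)$ used in the proof of Proposition~\ref{prop:w_a_w_in_l2p}). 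By hypothesis \eqref{eq:kernel_is_shifts_cont} the right-hand side tends to $0$ uniformly in $h\in B$ as $\tilde x\to0$, giving the required equicontinuity. Since we are on a compact group (the circle), no "tail" condition is needed, so Fréchet--Kolmogorov yields precompactness of $A(B)$.

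I expect the main obstacle to be largely bookkeeping rather than conceptual: carefully justifying Young's inequality in the $L^2_k$-periodic setting, and correctly invoking whichever version of the Riesz--Kolmogorov theorem one uses on the circle (some references state it for $L^p(\mathbb{R}^n)$ with a tail condition; here one wants the compact-domain version, or one reduces to it). A clean alternative that sidesteps the periodic Young's inequality is to combine the two ideas: first reduce to $k=2\pi$ and note $A$ is the Fourier multiplier by $\{\widehat{a}(j)\}$; then, rather than invoking Riemann--Lebesgue abstractly, observe that \eqref{eq:kernel_is_shifts_cont} directly forces $\widehat{a}(j)\to 0$ (indeed $|\widehat a(j)| = \frac12|\widehat a(j)(1-e^{-\i\tilde x j})|$ averaged appropriately, or simply: $\widehat a(j) = -\frac12\int a(x)(e^{-\i j x}-e^{-\i j(x+\pi/j)})\,dx = \frac12\int (a(x)-a(x+\pi/j))e^{-\i j x}\,dx$, whose modulus is $\le \frac12\|a(\cdot)-a(\cdot+\pi/j)\|_{L^1}\to 0$ as $|j|\to\infty$ by \eqref{eq:kernel_is_shifts_cont}), and then truncate the multiplier to get finite-rank approximants converging in norm. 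Either route gives compactness; I would present the Fréchet--Kolmogorov version since it most transparently motivates condition \eqref{eq:ccstab} as promised in the text.
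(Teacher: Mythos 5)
Your main argument (the Fr\'echet--Kolmogorov route) is correct and is essentially the paper's own proof: the authors likewise reduce to the circle by periodizing the kernel over the fundamental domains $[jk,(j+1)k)$, get uniform boundedness from Young's inequality, and get equicontinuity of translates from $\|T_{\tilde{x}}a-a\|_{L^1(\R)}\to0$, i.e.\ from \eqref{eq:kernel_is_shifts_cont}. Your side remark is also sound: the Fourier-multiplier/finite-rank argument works and shows the hypothesis \eqref{eq:kernel_is_shifts_cont} is in fact automatic for $a\in L^1(\R)$, but the version you chose to present matches the paper.
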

\begin{cor}
	The operator $A$ is compact in $Y$, since $A$ is invariant on $Y$ and $Y$ is a subspace of $L^2_{2\pi}(\R)$.
\end{cor}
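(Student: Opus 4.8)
The plan is a routine restriction argument: deduce compactness of $A$ on the subspace $Y$ from compactness of $A$ on the ambient space $L^2_{2\pi}(\R)$. There are three points to pin down: (i) $A$ is compact on all of $L^2_{2\pi}(\R)$; (ii) $Y$ is a \emph{closed} subspace of $L^2_{2\pi}(\R)$ that is invariant under $A$; (iii) the restriction of a compact operator to a closed invariant subspace is again compact.

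For (i) I would simply invoke Lemma~\ref{lem:l1_convol_compact_in_l2per} with $k=2\pi$ for the kernel $a\in\{a^+_{k_c+\delta},a^-_{k_c+\delta}\}$ defining $A$: the shift-continuity hypothesis \eqref{eq:kernel_is_shifts_cont} of that lemma follows from the assumption \eqref{eq:ccstab} of Theorem~\ref{thm:lp_stability}, since the dilation $a\mapsto a_{k_c+\delta}$ preserves the $L^1$-modulus of continuity (a change of variables). For (ii), closedness of $Y$ is essentially the orthogonal splitting \eqref{eq:space_decomposition}: the summand $\{l\cos x:l\in\R\}$ is one-dimensional, hence closed; $\textnormal{ran}(\Ac)$ is closed because $\Ac$ is bounded self-adjoint with $\ker(\Ac)=\textnormal{span}\{\e^{\i x},\e^{-\i x}\}$ by Lemma~\ref{lem:spectrum_Ac_is_2d} and its Fourier-diagonal eigenvalues $\alpha(0,jk_c)$ are bounded away from $0$ for $\Z\ni j\neq\pm1$ (they converge to $-\kap\neq0$ and are nonzero on the finitely many remaining indices by \eqref{assum:alpha_doesnt_have_other_zeroes}); and since the two summands are mutually orthogonal, their algebraic sum is closed. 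Invariance $A(Y)\subset Y$ is immediate once one notes that $A$ acts diagonally in the orthonormal basis $\{\e^{\i jx}\}_{j\in\Z}$ of $L^2_{2\pi}(\R)$, namely $A\e^{\i jx}=\widehat{a}(j)\e^{\i jx}$ with $\widehat{a}(j)\in\R$ and $\widehat{a}(j)=\widehat{a}(-j)$ because $a$ is even: hence $A$ maps $\textnormal{span}\{\e^{\i x},\e^{-\i x}\}$ into itself and, by the evenness, maps $\cos x$ to the multiple $\widehat{a}(1)\cos x$, so it preserves $\{l\cos x\}$; and it evidently preserves $\textnormal{ran}(\Ac)=\overline{\textnormal{span}\{\e^{\i jx}:\Z\ni j\neq\pm1\}}$.

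Finally, for (iii), given a bounded sequence $(h_n)\subset Y$, it is bounded in $L^2_{2\pi}(\R)$, so by (i) a subsequence $(Ah_{n_m})_m$ converges in $L^2_{2\pi}(\R)$; by (ii) every $Ah_{n_m}$ lies in the closed set $Y$, hence so does the limit, and the convergence takes place in $Y$. Thus $A|_Y$ is compact, which is the assertion. There is no substantive obstacle here; the only step deserving a moment's attention is checking that $\textnormal{ran}(\Ac)$ is norm-closed, so that limits of images remain inside $Y$, and this is already guaranteed by the direct-sum decomposition \eqref{eq:space_decomposition} used throughout the paper.
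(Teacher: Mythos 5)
Your argument is correct and follows exactly the route the paper intends: the corollary's justification is precisely that $A$ is compact on $L^2_{2\pi}(\R)$ by Lemma~\ref{lem:l1_convol_compact_in_l2per}, that $Y$ is an $A$-invariant (closed) subspace, and that restricting a compact operator to such a subspace preserves compactness. Your added verifications (closedness of $Y$ via the gap in $\{\alpha(0,jk_c)\}_{j\neq\pm1}$, invariance via the diagonal Fourier action with $\widehat{a}$ even, and the scaling-invariance of the shift-continuity condition) are accurate fillings-in of details the paper leaves implicit.
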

\begin{proof}[Proof of Lemma \ref{lem:l1_convol_compact_in_l2per}]
	First note, that for any $h\in L^2_k(\R)$, we have $a\ast h \in L^2_k(\R)$ and
	\begin{align*}
		(a\ast h)(x) &= \int[\R] a(x-y)h(y)~\txtd y = \sum_{j\in\Z} \int[jk][jk+k] a(x-y) h(y)~\txtd y\\ 
		&= \int[0][k] \sum_{j\in\Z} a(x-y+jk)h(y)~\txtd y =: \int[0][k] b(x-y) h(y)~\txtd y,
	\end{align*}
	where $\|b\|_{L^1([0,k])} = \|a\|_{L^1(\R)}$ and $\|b\ast h\|_{L^2[0,k]}=\|a\ast h\|_{L^2_k(\R)}$. Let $\{h_j{\in}L^2_k(\R)\}_{j\in\N}$ be a uniformly bounded sequence of functions in $L^2_k(\R)$, namely $\|h_j\|_{L^2_k(\R)}\leq C$. We are going to check the conditions of the Fr\'{e}chet-Kolmogorov-Riesz theorem (see e.g.~\cite{Bre2011,Kru2018}) to show that $\{a\ast h_j\}_{j\in\N}$ is precompact, which is going to finish the proof. Indeed, by Young's convolution inequality and~\eqref{eq:kernel_is_shifts_cont}, we see that $\{b\ast h_j\}_j$ is uniformly bounded
	\[
		\|b\ast h_j\|_{L^2[0,k]} =  \|a\ast h_j\|_{L^2_k(\R)} \leq C \|a\|_{L^1(\R)} <\infty,
	\]
	and equicontinuous
	\[
		\|T_{\tilde{x}}(b\ast h_j) - b\ast h_j\|_{L^2[0,k]} = \|(T_{\tilde{x}}a\ast h_j) - a\ast h_j\|_{L^2_k(\R)} \leq C \|T_{\tilde{x}}a-a\|_{L^1(\R)} \to 0,
	\]
	as $\tilde{x}\to 0$, where $T_{\tilde{x}}h(y):= h(x+\tilde{x})$. 
\end{proof}

\begin{proof}[Proof of Theorem~\ref{thm:lp_stability}]
As before, we let $k=k_c+\delta$, where $k_c$ is given by assumptions \eqref{assum:spectrum_intersects_zero_ii}--\eqref{assum:omega_non_zero}. For a fixed $\eps$, which is defined as in Theorem \ref{thm:steady_state_bifurcation}, $F(v,\eps,k)$ is defined by \eqref{eq:basic_shifted_scaled}. We already know that $v_{\eps,\delta}(x) = u_{\eps,\delta}(\frac{x}{k}) - \theta$ satisfies $F(v_{\eps,\delta},\eps,k)=0$. Let us consider the linearization of $F$ at $v_{\eps,\delta}$, 
\begin{equation*}
	Lh := \frac{\partial F}{\partial v}(v_{\eps,\delta},\eps, k) h = \Aek h - \kame h (a_k^-\ast v_{\eps,\delta}) - \kame v_{\eps,\delta} (a_k^-\ast h), \quad h\in L^2_{2\pi}(\R),
\end{equation*}
where $\Aek$ is given by \eqref{eq:basic_shifted_scaled}.
First, we find the essential spectrum of $L$ in $L^2_{2\pi}(\R)$.
Since, by Lemma~\ref{lem:l1_convol_compact_in_l2per}, $h\to a^\pm\ast h$ are compact operators in $L^2_{2\pi}(\R)$, then by Weyl's theorem (see e.g.  \cite[Example~XIII.4.3]{RS1978iv}),
\begin{equation*}
	\sigma_{{\textnormal{ess}}}(L) = \{\la\in\C: \la+\kape\in \sigma_{{\textnormal{ess}}}(B),\ Bh:= -\kame (a_k^-\ast v_{\eps,\delta}) h\}.
\end{equation*}
An alternative characterization of the spectrum of $B$  (see e.g.~\cite[Problem~VII.17b]{RS1978i}) is given by the essential range of $-\kame (a_k^-\ast v_{\eps,\delta})$, which is, by Lemma~\ref{lem:lone_linft}, 
\[
	\sigma(B) = \sigma_{{\textnormal{ess}}}(B) = \{ -\kame (a_k^-\ast v_{\eps,\delta})(x) : x\in \R \}.
\]
Since, for all $\delta$ satisfying \eqref{assum:delta_less_eps}, $\|v_{\eps,\delta}\|_\infty\to0$ as $\delta\to0$ and $\eps\to0$, we find the existence of $\eps_0$ such that for all $0<\eps\leq \eps_0$,
\[
	\sigma_{{\textnormal{ess}}}(L) = \{-\kape -\kame(a_k^-\ast v_{\eps,\delta})(x): x\in\R \} \subset \{z\in\C:\text{Re}z<0\}.
\]
In summary, the essential spectrum cannot produce any instability as it is contained in the left-hal of the complex plane. Let us now study the discrete spectrum of $L$. We are looking for solutions to the eigenvalue problem $Lh = \la h$.
We apply the Lyapunov-Schmidt reduction method as we did in Section~\ref{sec:existence}. 
Let us consider the space decomposition \eqref{eq:space_of_perturbations} and the corresponding projection $P$ on $Y \cap \ker (\Ac)$, where $\Ac := \mathcal{A}_{0,k_c}$.
Then the eigenvalue problem 
\[
H(h,\eps,\delta,\la) := L(\eps,\delta)h-\la h = 0, \quad \text{for $h\in Y$}, 
\]
is equivalent to  
\begin{align}
	P H(y+\psi,\eps,\delta,\la) &= 0, \quad y = Ph,\ \psi = (1-P)h, \label{eq:reduced_periodic} \\
 (1-P) H(y+\psi,\eps,\delta,\la) &=0. \label{eq:implicit_periodic}
\end{align}
Since, for $y_0(x):=\cos(x)$ (that is the eigenvector to $L(0,0)$ and $\la=0$), the maps
\begin{align*}
	\txtD_\psi (1-P)H(y_0,0,0,0) &= (1-P)\Ac :{\textnormal{ran}}(\Ac)\to {\textnormal{ran}}(\Ac),\\
	\txtD_\la (1-P)H(y_0,0,0,0) &= - (1-P)\1: {\textnormal{ran}}(\Ac)\to {\textnormal{ran}}(\Ac),
\end{align*}
both are linear diffeomorphisms, we may apply the Implicit Function Theorem. In particular, there exist $\eps_0$, $\delta_0$, $r$, $\psi=\psi(y,\eps,\delta)$, $\la=\la(y,\eps,\delta)$, $\psi(0,0,0)=0$, $\la(0,0,0)=0$, such that for all $|\eps|<\eps_0$, $|\delta|<\delta_0$, $\|y\|\leq r$ we get 
\[
	(1-P)H(y_0+y+\psi(y,\eps,\delta),\eps,\delta,\la(y,\eps,\delta))=0.
\]
Moreover we evidently also obtain $\psi\in C^2$, $\la\in C^2$, and $\la(y,0,0)=0$ for $\|y\|\leq r$.
Similar to Section~\ref{sec:existence} the map $\psi$ satisfies an expansion for $\|y\|\leq r$, $|\eps|<\eps_0$, $|\delta|<\delta_0$, given by
\begin{equation}\label{eq:another_expansion_for_psi}
	\psi(y,\eps,\delta)= G_0(\eps,\delta) + G_1(y,\eps,\delta)+R(y,\eps,\delta),
\end{equation}
where $G_j(\cdot,\eps,\delta):(\ker (\Ac))^j\to {\textnormal{ran}}(\Ac)$, $j=0,1$, are $j$-forms with respect to $y$ and $R(y,\eps,\delta) = o(\|y\|^1)$ as $y\to0$. Since $\la(\cdot,0,0)\equiv0$ for $\|y\|\leq r$ one checks that
\[
	\la(y,\eps,\delta)= \la_0(\eps,\delta) + o((|\eps|+|\delta|)\|y\|), \quad \|y\|+|\delta|+|\eps|\to0,
\]
where $\la_0(\eps,\delta) = \cO (|\eps|+|\delta|)$, as $\delta\to0$, $\eps\to0$.
Collecting $0$-forms in \eqref{eq:implicit_periodic}, we get for $G_0=G_0(\eps,\delta)$, 
\begin{equation*}
	\Aek G_0 -\kame (1-P)G_0 (a^-_k\ast v_{\eps,\delta}) - \kame (1-P) v_{\eps,\delta} (a^-_k\ast G_0) = \la_0(\eps,\delta) G_0.
\end{equation*} 
As in Section~\ref{sec:existence}, the Implicit Function Theorem implies $G_0\equiv0$. Collecting $1$-forms we get 
\begin{align}
\label{eq:brack2}
	\Aek G_1(y) =&\ \la_0 G_1(y) + \kame(1-P)\left[ (a^-_k\ast v_{\eps,\delta})(y+G_1(y)) +v_{\eps,\delta} a^-_k\ast (y+G_1(y)) \right].
\end{align}
By \eqref{eq:periodic_solution}, $v_{\eps,\delta}= 2\sqrt{\frac{\Omega(\eps,\delta)}{\omega}} \cos(x) + \cO (|\eps-\delta^2|)$, where $\Omega(\eps,\delta)$ is given by \eqref{eq:alpha_series_decomposition}. Therefore, we have
\[
	(a^-_k\ast v_{\eps,\delta}) G_1(y,\eps,\delta) + v_{\eps,\delta} (a^-_k\ast G_1(y,\eps,\delta)) = o(\|G_1(y,\eps,\delta)\|), \quad \delta\to0,\ \eps\to0, 
\]
which deals with the lst two terms inside the brackets in~\eqref{eq:brack2}. For the first term we obviously have
\[
	\la_0(\eps,\delta)G_1(y,\eps,\delta) = o(\|G_1(y,\eps,\delta)\|), \quad \delta\to0,\ \eps\to0. 
\]
By \eqref{eq:perturbed_A3} and \eqref{eq:Aek_inverse}, there exists $c>0$ such that for all $f\in {\textnormal{ran}}(\Ac)$,
\[
	\frac{1}{c} \|\Ac^{-1} f\| \leq \|f\| \leq c\|\Ac^{-1}f\|.
\]
As a result we obtain
\begin{align}
	G_1(&y,\eps,\delta) \sim \kam \Ac^{-1} (1-P) \left[ (a^-_{k_c}\ast v_{\eps,\delta})y +v_{\eps,\delta} (a^-_{k_c}\ast y) \right] \nonumber \\
	 &\sim 2\kam\sqrt{\frac{\Omega(\eps,\delta)}{\omega}}\Ac^{-1}(1-P)\left[ \widehat{a^-}(k_c) \cos(x) y + \cos(x)(a^-_{k_c}\ast y) \right] \nonumber\\
	&\sim 2l\kam\widehat{a^-}(k_c)\sqrt{\frac{\Omega(\eps,\delta)}{\omega}} \Big(\frac{\cos(2x)}{\alpha(0,2k_c)}+\frac{1}{\alpha(0,0)}\Big),\quad |\delta|+|\eps|\to0,  \label{eq:G_1_asymptotic}
\end{align}
where $y(x) = l\cos(x)$. It remains to be checked what happens for the term $R=R(y,\eps,\delta)$ in~\eqref{eq:another_expansion_for_psi}. We claim that it is a remainder term, which satisfies $R=o(G_1)$ as $|\delta|+|\eps|\to0$. Collecting in \eqref{eq:implicit_periodic} terms of order $o(\|y\|)$, we have
\begin{align*}
	\Aek &R(y,\eps,\delta) - (\la(y,\eps,\delta)-\la_0(\eps,\delta)) G_1(y,\eps,\delta)\\ 
	&= \kame (1-P) \left[ R(y,\eps,\delta) (a^-_k\ast v_{\eps,\delta}) + v_{\eps,\delta} (a^-_k\ast R(y,\eps,\delta))\right] + \la(\eps,\delta) R(y,\eps,\delta)\\	 
	&= o(R(y,\eps,\delta)),\quad |\delta|+|\eps|\to0.
\end{align*} 
By solving the last equation to leading-order we easily find
\[
	R(y,\eps,\delta) \sim \Ac^{-1} (\la(y,\eps,\delta)-\la_0(\eps,\delta))G_1(y,\eps,\delta) = o(G_1(y,\eps,\delta)), \quad |\delta|+|\eps|\to0.
\]
As a result, for all $\|y\|\leq r$, we indeed have as claimed
\[
	\psi(y,\eps,\delta) \sim G_1(y,\eps,\delta) + o(\|G_1(y,\eps,\delta)\|), \quad |\delta|+|\eps|\to0.
\]
we can now use \eqref{eq:reduced_periodic}, which yields 
\begin{align*}
	0&= PH(y+\psi,\eps,\delta,\la)\\ 
	&= \alpha(\eps,k)y-\la(y,\eps,\delta)y -\kame P(a^-\ast v_{\eps,\delta})(y+\psi) - \kame P v_{\eps,\delta} a^-\ast (y+\psi), 
\end{align*}
where $\alpha(\eps,k)$ is defined by \eqref{eq:def_alpha}. Hence, as $|\delta|+|\eps|\to0$, we can calculate that
\[
	\alpha(\eps,k)y \sim \la y - \kame P \left[ (a^-_k\ast v_{\eps,\delta})(y+G_1(y,\delta,\eps)) +v_{\eps,\delta} a^-_k\ast (y+G_1(y,\eps,\delta)) \right].
\]
Then, by \eqref{eq:alpha_series_decomposition}, \eqref{assum:omega_non_zero}, \eqref{eq:G_1_asymptotic}, and since $P(\cos^2(x))=0$, 
\begin{align*}
	0\sim&\ \Omega(\eps,\delta) + \la(y,\eps,\delta) 
		+ (\kame)^2 \widehat{a^-}(k_c)  \Big(\frac{\widehat{a^-}(2k_c)+\widehat{a^-}(k_c)}{\alpha(0,2k_c)}+\frac{2+2\widehat{a^-}(k_c)}{\alpha(0,0)}\Big) \frac{\Omega(\eps,\delta)}{2\omega}\\ 
		\sim&\ \frac{3}{2}\Omega(\eps,\delta) + \la(y,\eps,\delta), \quad |\delta|+|\eps|\to0.
\end{align*}
Finally, by \eqref{assum:delta_less_eps} we obtain 
\[
	\la \sim -\frac{3}{2}\Omega(\eps,\delta) <0, \quad |\eps|+|\delta|\to0,
\]
which implies asymptotic stability of $v_{\eps,\delta}$, and, as a result, of $u_{\eps,\delta}$. Note that it could be possible that new eigenvalues of $L$ appear for $\eps>0$. Since $\|v_{\eps,\delta}\|\to 0$ as $\eps\to0$ and $\delta$ satisfying \eqref{assum:delta_less_eps}, then, by e.g. \cite[Theorem~I.2.2]{DK1974} such eigenvalues belong to a neighbourhood of $\sigma(\Ac)$ for small $\eps$, namely
\[
	\sigma(L) \subset \{ x+r : x\in\sigma(\Ac),\ |r|\leq R(\eps,\delta)\}, \quad R(\eps,\delta) = \cO (\sqrt{\Omega(\eps,\delta)}).
\]
By the Implicit Function Theorem applied above we can redefine $\eps_0>0$ such that there is no new eigenvalue around $0$ and thus in the positive half-space for all $\eps<\eps_0$.
\end{proof}


\section{Examples} 
\label{sec:examples}

We still have to show that there exist kernels satisfying all our assumptions so that we can get bifurcations. We are going to provide two examples. Both examples are motivated by the goal to find simple, yet non-trivial kernels, where can check our conditions.  

\begin{exmp}
\label{exmp:gaussian2}
We start with Gaussians, respectively linear combinations of Gaussians, and consider
	\[
		a^+(x) = \frac{1}{\sqrt{2\pi l}} \e^{-\frac{x^2}{2l}}; \qquad a^-(x) = \frac{1}{2\sqrt{2\pi q}}\big(\e^{-\frac{(x-h)^2}{2q}} + \e^{-\frac{(x+h)^2}{2q}}\big).
	\]
In this case, the Fourier transforms of $a^\pm$ have the following form for $p\in\R$,
	\[
		\widehat{a^+}(p) = \e^{-\frac{l p^2}{2}}; \qquad \widehat{a^-}(p) = \cos(hp)\e^{-\frac{q p^2}{2}}.
	\]
	We put $l=q=2$, $\kap=1$, $m\in(0,1)$, $\gamma:=\kap{-}m$. Then it is straightforward to verify that \eqref{assum:assumptions_basic} holds. Next, one can just calculate 
\[
	\alpha(0,p) = (1-\gamma \cos(hp)) \e^{-p^2} -1, \quad p\in\R.
\] 
Hence, $\alpha(0,p)=0$ holds if and only if $1-\gamma\cos(hp)=\e^{p^2}$. For sufficiently small $h>0$, $(1-\gamma\cos(hp))<\e^{p^2}$, $p\in\R$. Monotonically increasing $h$ one will find $h_c>0$ such that $1-\gamma\cos(h_c p)$ touches $\e^{p^2}$ at $p=k_c$. Moreover, such $k_c$ is unique in $\R_+$. Hence, \eqref{assum:spectrum_intersects_zero_ii} and \eqref{assum:alpha_doesnt_have_other_zeroes} hold.
Since, for $h=h_c$, $\alpha(0,p)\leq0$, $p\in\R$, and $\alpha(0,k_c)=0$, we have that $k_c$ is a maximum of $\alpha$ we get 
\[
	0=\partial_p \alpha(0,k_c) = \e^{-k_c^2}((-2k_c)(1-\gamma \cos(h k_c) ) + \gamma h \sin(h k_c)).
\]
We can also calculate the second derivative directly to see that
\begin{align*}
	\e^{k_c^2} \cdot \frac{\partial^2 \alpha}{\partial p^2} (0,k_c) &= -2(1-\gamma\cos(hk_c)) - 2k_c \gamma h \sin (hk_c)+ \gamma h^2 \cos(hk_c) \\
		&= -(2+4k_c^2)(1-\gamma \cos(hk_c)) +\gamma h^2 \cos(hk_c) <0,
\end{align*}
where we use the equality $1-\gamma \cos(hk_c) = \e^{k_c^2}>1$, which implies that $1-\gamma\cos(hk_c)>0$ and $\cos(hk_c)<0$.
As a result \eqref{assum:d2_alpha_dk2_less_zero} is satisfied. It appears to be complicated to check \eqref{assum:omega_non_zero} analytically. Therefore, we demonstrate~\eqref{assum:omega_non_zero} graphically for $m=0.5$ as shown in Figure~\ref{fig:omega}.

\begin{figure}[H]
  \includegraphics[width=\textwidth]{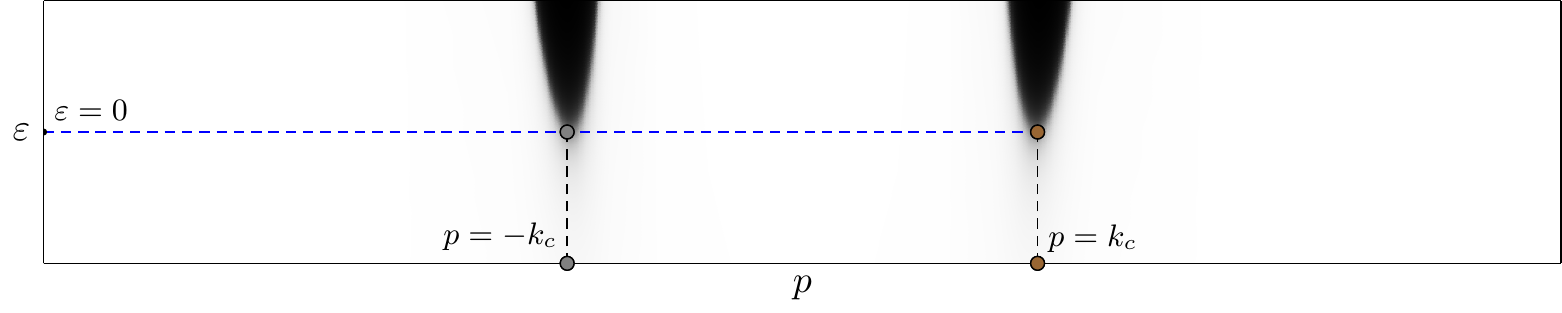}
  \caption{Computation of $\textnormal{sgn}(\alpha(\eps,p))$, where $-1$ is shown in white and $+1$ in black. This is shown only for illustration purposes and conditions on $\alpha$ can be checked analytically.}
\end{figure}

\begin{figure}[H]
  \includegraphics[width=\textwidth]{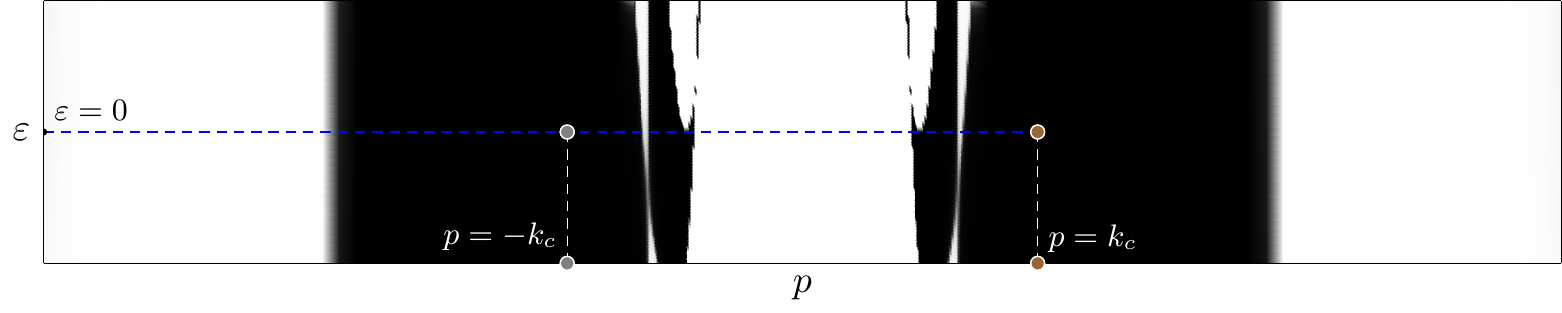}
  \caption{Computation of $\textnormal{sgn}(\omega(\eps,p))$. Again we show $-1$ in white and $+1$ in black.}
  \label{fig:omega}
\end{figure}

One can see on Figure~\ref{fig:omega} that $\omega=\omega(0,k_c)>0$. In fact, the condition is evidently not close to being violated in this case and the argument would be easy to make completely rigorous by just using interval arithmetic to validate the sign.
\end{exmp}

\begin{exmp}
\label{exmp:indicators}
The second example is in spirit similar to the first one, so we are a bit more brief. We consider uniform distributions:
	\[
		a^+(x) = \frac{1}{2l} \1_{[-l,l]}(x);  \qquad a^-(x) = \frac{1}{2q}\big( \1_{[-q-\tilde{h},-\tilde{h}]}(x) + \1_{[\tilde{h},q+\tilde{h}]}(x)\big).
	\]
In this case, the Fourier transform of $a^\pm$ has the following form, for $p\in\R$,
	\[
		\widehat{a^+}(p) = \frac{\sin(lp)}{lp}; \qquad \widehat{a^-}(p) = \frac{\sin(qp+\tilde{h}p) - \sin(\tilde{h}p)}{qp} = 2\frac{\cos(\tilde{h}p+\frac{qp}{2}) \sin(\frac{qp}{2})}{qp}.
	\]
We put $l=1$, $q=2$, $\kap=1$, $m\in(0,1)$, $\gamma:=\kap{-}m$. Then \eqref{assum:assumptions_basic} holds.
Next, for $h=\tilde{h}+1$, we find
\[ 
	\alpha(0,p) = (1-\gamma \cos(hp)) \frac{\sin(p)}{p}-1, \quad p\in\R.
\]
Since, for all $j\in\Z$, $\alpha(0,j\pi)\neq0$, then $\alpha(0,p)=0$ if and only if 
\[
	1-\gamma \cos(hp) = \frac{p}{\sin(p)}.
\]
For sufficiently small $h\geq1$, $(1-\gamma\cos(hp))< \frac{p}{\sin(p)}$, $p\in\R$, $p\neq j\pi$, $j\in\Z$.
Monotonically increasing $h$ one will find $h_c>0$ such that $1-\gamma\cos(h_c p)$ touches $\frac{p}{\sin(p)}$ at $p=k_c$.
Moreover, such $k_c$ is unique in $\R_+$. Hence, \eqref{assum:spectrum_intersects_zero_ii} and \eqref{assum:alpha_doesnt_have_other_zeroes} hold. Since, for $h=h_c$, $\alpha(0,p)\leq0$, $p\in\R$ and $\alpha(0,k_c)=0$, then $k_c$ is a maximum of $\alpha$ and 
\[
	0=\partial_p \alpha(0,k_c) = (1-\gamma \cos(h k_c))\frac{k_c\cos(k_c) - \sin(k_c)}{k_c^2} + \gamma h \sin(hk_c) \frac{\sin(k_c)}{k_c}.
\]
Hence, we calculate
\begin{align*}
	\frac{\partial^2 \alpha}{\partial p^2} (0,k_c) &= -(1-\gamma \cos(h k_c))\frac{\sin(k_c)}{k_c} +2\gamma h \sin(hk_c) \frac{k_c\cos(k_c)-\sin(k_c)}{k_c^2}\\ 
	&\quad + \gamma h^2 cos(hk_c) \frac{\sin(k_c)}{k_c} \\
	&= -1 - 2 \big( \frac{k_c\cos(k_c)-\sin(k_c)}{k_c^2}\big)^2 (1-\gamma \cos(hk_c))^2 \\ 
	&\quad + \gamma h^2\cos(hk_c)\frac{\sin(k_c)}{k_c}<0, 
\end{align*}
where we use the equality $1-\gamma \cos(hk_c) = \frac{k_c}{\sin(k_c)}>1$, which implies that $\cos(hk_c)<0$.
As a result, \eqref{assum:d2_alpha_dk2_less_zero} is satisfied. As in Example~\ref{exmp:gaussian2}, we check \eqref{assum:omega_non_zero} graphically for $m=0.5$.

\begin{figure}[H]
  \includegraphics[width=\textwidth]{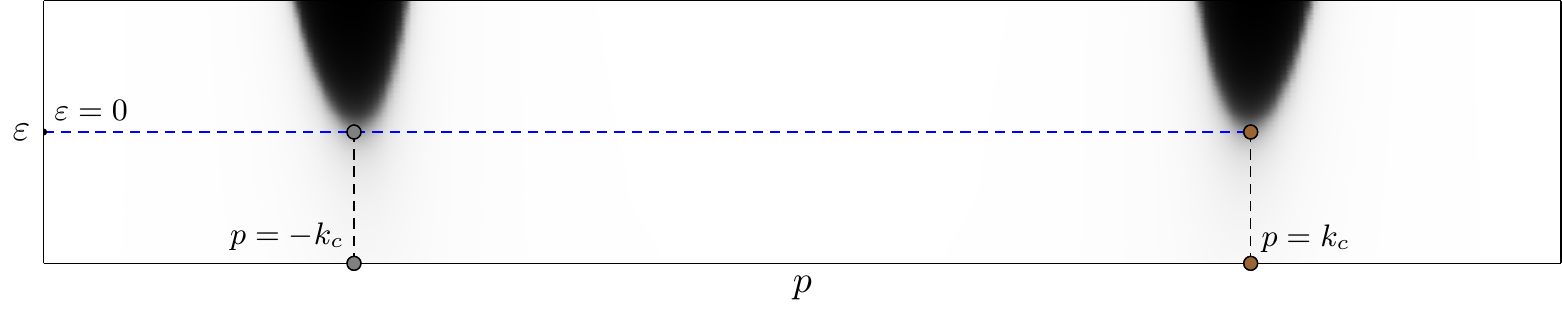}
  \caption{Computation of $\textnormal{sgn}(\alpha(\eps,p))$; same conventions as for plots above.}
\end{figure}

\begin{figure}[H]
  \includegraphics[width=\textwidth]{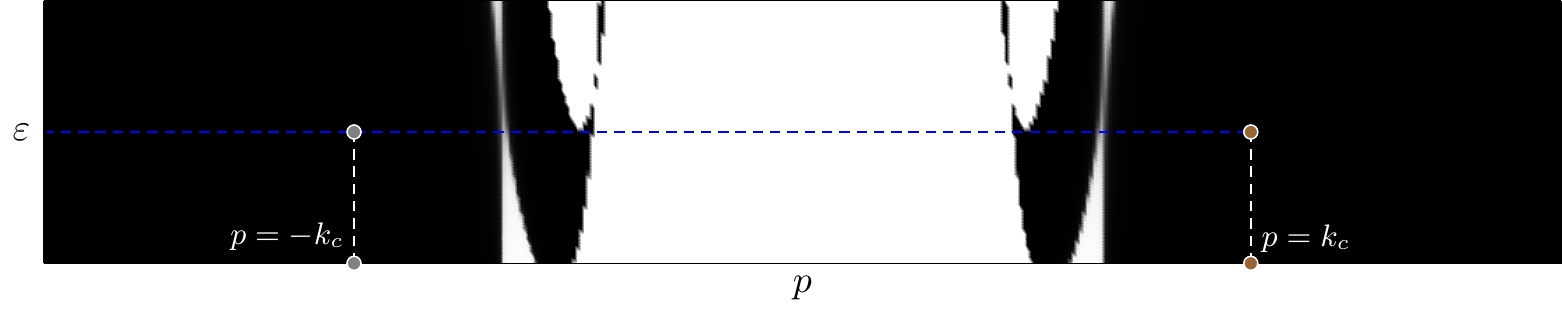}
  \caption{Computation of $\textnormal{sgn}(\omega(\eps,p))$; same conventions as for plots above.}
  \label{fig:omega2}
\end{figure}

One can again clearly see on \eqref{fig:omega2} that $\omega=\omega(0,k_c)>0$.  
\end{exmp}


\section{Relation to the Fisher-KPP equation with a non-local reaction} 
\label{sec:relation_to_FKPP_with_nonloc_reaction}

In this section we establish the connection between Theorem \ref{thm:steady_state_bifurcation} and 
and \cite[Theorem~1.1]{FH2015} for the nonlocal Fisher-KPP equation. For the convenience of the reader 
we are going to formulate \cite[Theorem~1.1]{FH2015} here again for reference. We consider the equation
\begin{equation}\label{eq:laplace_nonloc_reaction_normalized}
	\partial^2_{x} U(x) + \mu U(x)(1 - (a^-\ast U)(x)) = 0, \quad x\in\R,
\end{equation}
where $\mu>0$. We need to discuss the relevant hypotheses before stating the result.

\begin{hypothesis}\label{FH:H1}
The kernel $a^-$ satisfies:
\begin{equation*}
	a^-\geq0,\quad a^-(0)>0,\quad a^-(-x)\equiv a^-(x),\quad \int[\R] a^-(x)~\txtd x =1,\quad \int[\R] x^2 a^-(x)~\txtd x<\infty. 
\end{equation*}
\end{hypothesis}
Then one defines the usual dispersion relation
\begin{equation}\label{eq:FH:def_of_d}
	d(\mu,k):= -k^2-\mu \widehat{a^-}(k).
\end{equation}
\begin{hypothesis}\label{FH:H2}
	For $a^-(x)$ satisfying Hypothesis \ref{FH:H1}, further assume there exist unique $k_c>0$ and $\mu_c>0$ such that the following conditions are satisfied:
	\begin{enumerate}[label=(\roman*)]
		\item\label{FH:i}\begin{center}$d(\mu_c,k_c) = 0.$\end{center} 
		\item\label{FH:ii}\begin{center} $\partial_k d(\mu_c,k_c)=0.$\end{center}
		\item\label{FH:iii}\begin{center} $\partial^2_{k} d(\mu_c,k_c)<0.$\end{center}
		\item\label{FH:iv}\begin{center} $d(\mu_c,jk_c)\neq0,\quad \Z\ni j\neq\pm1.$\end{center}
	\end{enumerate}
\end{hypothesis}

Now we can state \cite[Theorem~1.1]{FH2015}:

\begin{thm}
\label{thm:FH:1_1}
Assume the hypotheses \ref{FH:H1}-\ref{FH:H2} above are satisfied and $\omega_1$ defined by \eqref{eq:FH:omega} is positive.
Let $\mu:= \mu_c +\tilde{\eps}$ and $k:=k_c+\delta$.
Then, there exists $\tilde{\eps}_0>0$, such that for all $\tilde{\eps}\in(0,\tilde{\eps}_0]$ and all 
\[
\delta^2 < \frac{-\widehat{a^-}(k_c)}{1+\frac{\mu_c}{2}\partial^2_{k}\widehat{a^-}(k_c)}\tilde{\eps} 
\]
there is a stationary $\frac{2\pi}{k}$-periodic solution of \eqref{eq:laplace_nonloc_reaction_normalized} with the leading expansion of the form 
\begin{equation*}
	U_{\tilde{\eps},\delta}(x) = 1 + 2\sqrt{\frac{-\widehat{a^-}(k_c)\tilde{\eps}-(1+\frac{\mu_c}{2}\partial^2_{k}\widehat{a^-}(k_c))\delta^2}{\omega_1}} cos\big((k_c+\delta)x\big) + o(|\tilde{\eps}|^{\frac12} + |\delta|),
\end{equation*}
where 
\begin{equation}\label{eq:FH:omega}
	\omega_1 := -\mu_c\widehat{a^-}(k_c) \left( \frac{\mu_c(\widehat{a^-}(k_c)+\widehat{a^-}(2k_c))}{4k_c^2+\mu_c\widehat{a^-}(2k_c)} +2(1+\widehat{a^-}(k_c))\right)>0.
\end{equation}
\end{thm}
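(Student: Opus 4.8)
The plan is to transcribe, with the obvious modifications, the Lyapunov--Schmidt argument already carried out for Theorem~\ref{thm:steady_state_bifurcation}. Write $U = 1 + W$ and, as in \eqref{eq:basic_shifted_scaled}, rescale by the wave number via $V(x) := W(x/k)$, so that $V$ is $2\pi$-periodic and \eqref{eq:laplace_nonloc_reaction_normalized} becomes
\[
	\mathcal{G}(V,\mu,k) := k^2 V'' - \mu\,(a^-_k\ast V) - \mu\, V(a^-_k\ast V) = 0,
\]
where $a^-_k(x) = \frac1k a^-(x/k)$. Because of the term $k^2 V''$ the natural phase space is the periodic Sobolev space $H^2_{2\pi}(\R)$ rather than $L^2_{2\pi}(\R)$: using Hypothesis~\ref{FH:H1} and that $H^2_{2\pi}(\R)$ is a Banach algebra in one dimension, one checks that $V\mapsto V(a^-_k\ast V)$ maps $H^2_{2\pi}(\R)$ into itself and that $\mathcal{G}\in C^2\big(H^2_{2\pi}(\R)\times(0,\infty)\times(0,\infty)\to L^2_{2\pi}(\R)\big)$, the required regularity in $k$ of the Fourier multiplier $\widehat{a^-}(k\,\cdot)$ coming from $\int x^2 a^-<\infty$. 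The linearization at $(0,\mu_c,k_c)$ is the Fourier multiplier $\mathcal{A}$ acting on the mode $\e^{\i jx}$ by the dispersion relation $d(\mu_c,jk_c)$ of \eqref{eq:FH:def_of_d}; since $|d(\mu_c,jk_c)|\sim k_c^2 j^2$ as $|j|\to\infty$, $\mathcal{A}:H^2_{2\pi}(\R)\to L^2_{2\pi}(\R)$ is Fredholm of index zero, and Hypotheses~\ref{FH:H2}\ref{FH:i} and \ref{FH:H2}\ref{FH:iv} give $\ker\mathcal{A}={\textnormal{span}}\{\e^{\i x},\e^{-\i x}\}$ with ${\textnormal{ran}}(\mathcal{A})$ the closed complement spanned, in $L^2_{2\pi}$, by the modes $j\neq\pm1$ --- exactly the analogue of \eqref{eq:space_decomposition}.

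Next I would split $V=y+\psi$, $y=Pv\in\ker\mathcal{A}$, $\psi=(1-P)v\in{\textnormal{ran}}(\mathcal{A})\cap H^2_{2\pi}(\R)$, project \eqref{eq:laplace_nonloc_reaction_normalized} onto $\ker\mathcal{A}$ and its complement, and solve the complement equation for $\psi=\psi(y,\mu,k)$ near $(0,\mu_c,k_c)$ by the Implicit Function Theorem. The relevant fact is that $(1-P)\mathcal{A}_{\mu,k}$ is invertible on the range, with inverse acting on the $j$-th mode ($j\neq\pm1$) by division by $d(\mu,jk)$ --- the analogue of \eqref{eq:Aek_inverse}, bounded $L^2_{2\pi}\to H^2_{2\pi}$ because $|d(\mu,jk)|\gtrsim j^2$, and valid for $(\mu,k)$ near $(\mu_c,k_c)$ since there $d(\mu,jk)\neq0$ for $j\neq\pm1$ by \ref{FH:H2}\ref{FH:iv} and $d(\mu,\pm\infty)=-\infty$. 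Restricting to real-valued solutions forces $y = s\e^{\i x}+\bar s\e^{-\i x}$, and expanding $\psi$ in powers of $y$ gives, exactly as in the proof of Theorem~\ref{thm:steady_state_bifurcation}, $G_0=G_1=0$ and
\[
	G_2 = g_0|s|^2 + g_2\big(s^2\e^{2\i x}+\bar s^2\e^{-2\i x}\big),\qquad g_2 = \frac{\mu\widehat{a^-}(k)}{d(\mu,2k)},\quad g_0 = \frac{2\mu\widehat{a^-}(k)}{d(\mu,0)},
\]
with $d(\mu,0)=-\mu$.

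Substituting $V=y+\psi$ into the $\ker\mathcal{A}$-projected equation and Taylor-expanding $d(\mu_c+\tilde\eps,k_c+\delta)$ about $(\mu_c,k_c)$ --- the order-$\delta$ term vanishes by \ref{FH:H2}\ref{FH:ii}, the $\partial_\mu^2 d$ term vanishes since $d$ is affine in $\mu$, and the $\partial_\mu\partial_k d$ term is $o(|\tilde\eps|+\delta^2)$ --- produces the scalar bifurcation equation
\[
	\big(\Omega_1(\tilde\eps,\delta)-\omega_1|s|^2\big)\big(s\e^{\i x}+\bar s\e^{-\i x}\big) = o\big(|s|^3+|\tilde\eps|+\delta^2\big),
\]
where $\Omega_1(\tilde\eps,\delta) = \partial_\mu d(\mu_c,k_c)\tilde\eps + \tfrac12\partial_k^2 d(\mu_c,k_c)\delta^2 = -\widehat{a^-}(k_c)\tilde\eps - \big(1+\tfrac{\mu_c}{2}\partial_k^2\widehat{a^-}(k_c)\big)\delta^2$ and, after collecting the cubic contribution of $G_2$ and using the parallel with \eqref{assum:omega_non_zero}, the coefficient is $\omega_1$ as in \eqref{eq:FH:omega}; the transversality $\partial_\mu d(\mu_c,k_c)=-\widehat{a^-}(k_c)>0$ holds because \ref{FH:H2}\ref{FH:i} forces $\widehat{a^-}(k_c)=-k_c^2/\mu_c<0$. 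Since $\omega_1>0$ by hypothesis, the Implicit Function Theorem yields $|s|^2 = \Omega_1(\tilde\eps,\delta)/\omega_1 + o(|\tilde\eps|+\delta^2)$, and the stated constraint $\delta^2 < \frac{-\widehat{a^-}(k_c)}{1+\frac{\mu_c}{2}\partial_k^2\widehat{a^-}(k_c)}\tilde\eps$ (note $1+\frac{\mu_c}{2}\partial_k^2\widehat{a^-}(k_c)>0$ by \ref{FH:H2}\ref{FH:iii}) guarantees $\Omega_1(\tilde\eps,\delta)>0$, so $|s|$ is real; translation invariance of \eqref{eq:laplace_nonloc_reaction_normalized} fixes the phase $\arg s=0$, and undoing the rescaling $U_{\tilde\eps,\delta}(x)=1+V_{\tilde\eps,\delta}((k_c+\delta)x)$ gives the claimed leading expansion.

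The main obstacle, and essentially the only genuine difference from the doubly-nonlocal case of Theorem~\ref{thm:steady_state_bifurcation}, is that the leading operator is now \emph{unbounded}: one must set the problem up in $H^2_{2\pi}(\R)$ and verify that $\mathcal{A}$ and its small perturbations $\mathcal{A}_{\mu,k}$ are Fredholm of index zero with the stated two-dimensional kernel and with a range inverse bounded $L^2_{2\pi}\to H^2_{2\pi}$. Via the Plancherel isomorphism $L^2_{2\pi}\cong\ell_2$ this reduces to the quantitative bound $|d(\mu,jk)|\gtrsim j^2$ for large $|j|$ (immediate from boundedness of $\widehat{a^-}$) together with \ref{FH:H2}\ref{FH:iv}; once this functional-analytic frame is established, every remaining step is a line-by-line transcription of the corresponding step in the proof of Theorem~\ref{thm:steady_state_bifurcation}, with $\mathcal{A}_{\eps,k}$ replaced by $\mathcal{A}_{\mu,k}$, the symbol $\alpha(\eps,\cdot)$ by $d(\mu,\cdot)$, and $\kam$ by $\mu_c$.
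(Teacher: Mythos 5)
The paper offers no proof of Theorem~\ref{thm:FH:1_1} at all: it is quoted verbatim from \cite[Theorem~1.1]{FH2015} for the reader's convenience, so there is no internal argument to compare yours against line by line. What you propose is a self-contained proof obtained by transplanting the Lyapunov--Schmidt scheme of Theorem~\ref{thm:steady_state_bifurcation}, and the outline is correct: writing $U=1+W$, rescaling by $k$, and linearizing produces the Fourier multiplier $d(\mu,jk)$ of \eqref{eq:FH:def_of_d}; Hypothesis~\ref{FH:H2}~\ref{FH:i} and \ref{FH:iv} give the two-dimensional kernel; your coefficients $g_0=2\mu\widehat{a^-}(k)/d(\mu,0)$ with $d(\mu,0)=-\mu$ and $g_2=\mu\widehat{a^-}(k)/d(\mu,2k)$ reproduce, after projecting the cubic terms onto the critical modes, exactly $\omega_1$ of \eqref{eq:FH:omega}; and the expansion of $d(\mu_c+\tilde\eps,k_c+\delta)$ using \ref{FH:ii}, the affine dependence on $\mu$, and \ref{FH:iii} yields $\Omega_1(\tilde\eps,\delta)=-\widehat{a^-}(k_c)\tilde\eps-(1+\tfrac{\mu_c}{2}\partial_k^2\widehat{a^-}(k_c))\delta^2$ together with the stated admissibility region for $\delta$, the transversality $\partial_\mu d(\mu_c,k_c)=-\widehat{a^-}(k_c)=k_c^2/\mu_c>0$ being forced by \ref{FH:i}. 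You also correctly isolate the one genuinely new technical point relative to the doubly-nonlocal case: the linearization is unbounded, so the reduction must be set up with $\mathcal{A}_{\mu,k}:H^2_{2\pi}(\R)\to L^2_{2\pi}(\R)$ Fredholm of index zero and with the inverse on the range bounded from $L^2_{2\pi}$ to $H^2_{2\pi}$, which follows from $|d(\mu,jk)|\gtrsim 1+j^2$ for $j\neq\pm1$ near $(\mu_c,k_c)$; this replaces \eqref{eq:Aek_inverse}. The trade-off between the two routes: the paper's citation keeps Section~\ref{sec:relation_to_FKPP_with_nonloc_reaction} short and defers to the treatment in \cite{FH2015}, whereas your argument makes the structural parallel with \eqref{assum:omega_non_zero} explicit and thereby makes the limit statements of Theorem~\ref{thm:connection_to_FH2015} (in particular $\omega(\sigma)\to\omega_0$ with $\omega_0\theta^2=\omega_1$) transparent, at the modest cost of the extra Fredholm and $C^2$-substitution bookkeeping in $H^2_{2\pi}$ that you sketch but would need to write out.
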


To relate the nonlocal and the doubly-nonlocal results, we first note a useful preliminary formal computation
\begin{multline*}
	\frac{\kap}{\sigma^2}(a^+_\sigma\ast u-u) = \frac{\kap}{\sigma^2} \int_\R \frac{1}{\sigma}a^+(\frac{y}{\sigma})(u(x-y)-u(x))~\txtd y \\ 
	= \frac{\kap}{\sigma^2} \int_\R a^+(y)(u(x-\sigma y)-u(x))~\txtd y  \sim \gamma \kap\partial^2_{x} u(x) + o(\sigma),\quad \sigma \to 0.
\end{multline*}
where $\gamma := \frac{1}{2}\int_\R y^2 a^+(y) ~\txtd y$. Therefore, one can conjecture that the scaling limit of \eqref{eq:basic} is actually the following equation
\begin{equation}\label{eq:laplace_nonloc_reaction}
	\gamma \kap \partial^2_{x} u(x) + (\kap{-}m) u(x) -\kam u(x)(a^-\ast u)(x) = 0, \quad x\in\R.
\end{equation}
This motivates us to first rescale \eqref{eq:basic} suitably. We consider transformed parameters instead of $\kap_\eps$, $\kam_\eps$, $m$ and $a^+$ given by
\begin{gather*}
	\widetilde{\ka}^+_\eps(\sigma, \ka) = (1+\eps)\frac{\kap+\ka}{\sigma^2}, \quad \widetilde{\ka}^-_\eps(\sigma, \ka) = \big( 1+\eps\frac{ \kap + \ka}{\sigma^2 (\kap{-}m)}\big)\kam \\
	\widetilde{m}(\sigma, \ka) = m + \frac{\kap + \ka}{\sigma^2} - \kap, \quad a^+_\sigma(x) = \frac{1}{\sigma}a^+(\frac{x}{\sigma}),
\end{gather*}
where the dependence on $\sigma$, $\eps$ and $\ka$ is chosen so that 
\[
	\frac{\widetilde{\ka}^+_\eps - \widetilde{m}}{\widetilde{\ka}^-_\eps} = \frac{\kap-m}{\kam} = \theta \quad \text{and} \quad \widetilde{\ka}^+_0-\widetilde{m} = \kap - m. 
\]
Hence we arrive to the following equation
\begin{equation}\label{eq:basic_rescaled}
	\widetilde{\ka}^+_\eps (a^+_\sigma\ast u)(x) - \widetilde{m}u(x) - \widetilde{\ka}^-_\eps u(x) (a^-\ast u)(x) = 0, \quad x\in\R.
\end{equation}
We want to compare our results applied to~\eqref{eq:basic_rescaled} to the (singular) limit $\sigma\rightarrow 0$, where the results of Theorem~\ref{thm:FH:1_1} hold. We denote for $\sigma>0$ (cf.~\eqref{eq:def_alpha})  
\begin{equation*}
	\at(\eps,p,\sigma,\ka) = \widetilde{\ka}^+_\eps \widehat{a^+_\sigma}(p) - (\widetilde{\ka}^+_\eps - \widetilde{m}) \widehat{a^-}(p) - \widetilde{\ka}^+_\eps,
\end{equation*}
and assume $\int[\R] x^2a^+(x)~\txtd x <\infty$ (cf.~\eqref{assum:assumptions_basic}). We extend $\tilde{\alpha}$, for $\eps=0$ and $\sigma\leq0$, as follows
\begin{align*}
	\tilde{\alpha}(0,p,\sigma,\ka) &:= \lim_{\sigma\to 0_+} \tilde{\alpha}(0,p,\sigma,\ka) = - \gamma \kap p^2 - (\kap{-}m) \widehat{a^-}(p),  \quad \sigma\leq0. 
\end{align*}
Note that the extension is continuous in $\sigma$, i.e., $\sigma{\mapsto}\tilde{\alpha}(0,p,\sigma,\ka)$ is in $C(\R)$. Now one can repeat the formulations of the assumptions \eqref{assum:assumptions_basic}--\eqref{assum:omega_non_zero} in terms of the transformed parameters.
We simply label these assumptions as \eqref{assum:assumptions_basic}$_\sigma$--\eqref{assum:omega_non_zero}$_\sigma$. In particular, by \eqref{assum:spectrum_intersects_zero_ii}$_\sigma$--\eqref{assum:delta_less_eps}$_\sigma$ we define $k_c(\sigma)$, and by \eqref{assum:omega_non_zero}$_\sigma$ we define $\omega(\sigma)$. The next result states that we indeed obtain a natural limiting result if we let the linear part of the doubly-nonlocal problem converge to the classical diffusion case.  

\begin{thm}\label{thm:connection_to_FH2015}
Let $\gamma=1$, $\omega_1$ given by \eqref{eq:FH:omega} be positive, \eqref{assum:assumptions_basic} and Hypothesis~\ref{FH:H1} hold true, additionally $\kap,m>0$ be such that Hypothesis~\ref{FH:H2} holds with $\mu_c=\frac{\kap{-}m}{\kap}$ and for some $k_c>0$.
Then there exists $\eps_0>0$ such that:
\begin{enumerate}
	\item[(T1)]\label{itm:connction_to_FH2015_statement1} For all $\eps\in(0,\eps_0)$ and $\delta^2 {<} \frac{-\widehat{a^-}(k_c)}{1+\frac{\mu_c}{2}\partial^2_{k}\widehat{a^-}(k_c)}\frac{m\eps}{\kap(1+\eps)}$ there exists a $\frac{2\pi}{k_c+\delta}$-periodic solution $u_{\eps,\delta}$ to \eqref{eq:laplace_nonloc_reaction} with the leading expansion of the form, 
  \begin{equation*}
		u_{\eps,\delta}(x) = \theta + 2\sqrt{\frac{\Omega_0(\eps,\delta)}{\omega_0}} cos\big((k_c+\delta)x\big) +o(|\eps|^{\frac12} + |\delta|),
  \end{equation*}
	where $\omega_0 \theta^2 = \omega_1$ and $\Omega_0(\eps,\delta) = -m \widehat{a^-}(k_c)\eps - \kap(1+\frac{\mu_c}{2})\partial^2_{k} \widehat{a^-}(k_c) \delta^2$.
\item[(T2)]\label{itm:connction_to_FH2015_statement2} For some $\sigma_0>0$ and for all $\sigma\in(0,\sigma_0)$ there exist $\ka(\sigma)$, $k_c(\sigma)$, such that \eqref{assum:assumptions_basic}$_\sigma$--\eqref{assum:d2_alpha_dk2_less_zero}$_\sigma$, \eqref{assum:omega_non_zero}$_\sigma$ hold true and for all $\eps\in(0,\eps_0)$ and all $\delta$ that satisfy \eqref{assum:delta_less_eps}$_\sigma$ there exists a $\frac{2\pi}{k_c(\sigma)+\delta}$-periodic solution $u_{\eps,\delta,\sigma}(x)$ to \eqref{eq:basic_rescaled} (with  $\ka=\ka(\sigma)$) with the leading expansion of the form
  \begin{equation*}
		u_{\eps,\delta,\sigma}(x) = \theta + 2\sqrt{\frac{ \Omega_\sigma(\eps,\delta)}{\omega(\sigma)}} cos((k_c(\sigma)+\delta)x) +o(|\eps|^{\frac12} + |\delta|),  
	\end{equation*}
as $\delta\to0,\ \eps\to0$, where $\omega(\sigma)$ is defined by \eqref{assum:omega_non_zero}$_\sigma$ and $\Omega_\sigma$ is given by (cf.~\eqref{eq:alpha_series_decomposition})
\[
		\Omega_\sigma(\eps,\delta) = \partial_\eps \tilde{\alpha} (0,k_c(\sigma),\sigma,\ka(\sigma))\eps + \frac{1}{2}\partial^2_{k} \tilde{\alpha}(0,k_c(\sigma),\sigma,\ka(\sigma))\delta^2.
	\]
\item[(T1)]\label{itm:connction_to_FH2015_statement3} We have $\ka(\sigma)\to 0$, $k_c(\sigma)\to  k_c$, $\omega(\sigma) \to \omega_0$ and $\Omega_\sigma(\eps,\delta)\to \Omega_0(\eps,\delta)$, as $\sigma\to0$.  As a result, for $\eps,\delta$ satisfying \ref{itm:connction_to_FH2015_statement1},
	\[
		\lim_{\sigma\to0} \|u_{\eps,\delta,\sigma} - u_{\eps,\delta}\| = o(|\eps|^{\frac12} + |\delta|),\quad \delta\to0,\ \eps\to0.
	\]
	\end{enumerate}
\end{thm}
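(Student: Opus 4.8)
The plan is to apply Theorem~\ref{thm:steady_state_bifurcation} twice — once to the limiting local diffusion equation \eqref{eq:laplace_nonloc_reaction} and once to the rescaled doubly-nonlocal equation \eqref{eq:basic_rescaled} — and then to compare the two parameter families by a continuity argument in $\sigma$. For (T1), first I would check that, under the hypotheses (with $\gamma=1$, $\mu_c=\frac{\kap-m}{\kap}$, and Hypotheses~\ref{FH:H1}--\ref{FH:H2}), the dispersion function $\tilde\alpha(0,p,0,0) = -\kap p^2 - (\kap-m)\widehat{a^-}(p) = \kap\, d(\mu_c,p)$ inherits all of \eqref{assum:spectrum_intersects_zero_ii}--\eqref{assum:d2_alpha_dk2_less_zero} directly from Hypothesis~\ref{FH:H2}\ref{FH:i}--\ref{FH:iv}, with the same critical wave number $k_c$. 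I would also translate \eqref{assum:omega_non_zero} into the positivity of $\omega_0$, checking the algebraic identity $\omega_0\theta^2 = \omega_1$: here $\alpha(0,jk_c)$ in the definition of $\omega$ is replaced by $\kap\, d(\mu_c,jk_c)$ and $\kam\theta = \kap-m = \kap\mu_c$, so the prefactor $(\kam)^2\widehat{a^-}(k_c)$ becomes $(\kap\mu_c/\theta)^2\widehat{a^-}(k_c)$ and the bracketed terms rescale by $1/\kap$; matching this against \eqref{eq:FH:omega} (noting $\alpha(0,2k_c) = \kap(-4k_c^2 - \mu_c\widehat{a^-}(2k_c)) = \kap\, d(\mu_c,2k_c)$ and $\alpha(0,0) = \kap(-(\kap-m)/\kap \cdot 1) \cdot$ — more precisely $-m\widehat{a^-}(0)$, wait $\alpha(0,0) = \kap - (\kap-m) - \kap = -(\kap-m)$ actually one must recompute for the local case where $\widehat{a^+}\to 1$) is a routine but bookkeeping-heavy computation. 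Then Theorem~\ref{thm:steady_state_bifurcation} applied to \eqref{eq:laplace_nonloc_reaction} directly yields the solution $u_{\eps,\delta}$ with the stated expansion, after identifying $\Omega(\eps,\delta) = \partial_\eps\alpha(0,k_c)\eps + \tfrac12\partial_k^2\alpha(0,k_c)\delta^2$ with $\Omega_0(\eps,\delta)$ using $\partial_\eps\alpha(0,k_c) = -m\widehat{a^-}(k_c)$ (from \eqref{eq:dalpha_deps}) and $\partial_k^2\alpha(0,k_c) = -2\kap - (\kap-m)\partial_k^2\widehat{a^-}(k_c) = -2\kap(1 + \tfrac{\mu_c}{2}\partial_k^2\widehat{a^-}(k_c))$, and rescaling the admissibility bound on $\delta^2/\eps$ accordingly via \eqref{assum:delta_less_eps}.

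For (T2), the key point is that $\tilde\alpha(0,p,\sigma,\ka)$ depends continuously (indeed $C^2$ jointly, by the moment assumption $\int x^2 a^+ < \infty$ and dominated convergence for the $\sigma$-derivatives of $\widehat{a^+_\sigma}(p) = \widehat{a^+}(\sigma p)$) on $(\sigma,\ka)$ near $(0,0)$, with $\tilde\alpha(0,p,0,0) = \kap\, d(\mu_c,p)$. I would set up the two defining conditions \eqref{assum:spectrum_intersects_zero_ii}$_\sigma$ and \eqref{eq:dalpha_dk_zero}$_\sigma$, namely $\tilde\alpha(0,k_c(\sigma),\sigma,\ka(\sigma)) = 0$ and $\partial_k\tilde\alpha(0,k_c(\sigma),\sigma,\ka(\sigma)) = 0$, as a system $\Phi(k_c,\ka,\sigma)=0$ of two equations in the two unknowns $(k_c,\ka)$; at $\sigma=0$ it has the solution $(k_c,0)$, and I would verify that the $2\times 2$ Jacobian with respect to $(k_c,\ka)$ there is invertible — the $\partial_{k_c}$ column involves $(\partial_k^2\tilde\alpha, \partial_k^3\tilde\alpha)$... actually the relevant determinant is $\det\begin{pmatrix}\partial_k\tilde\alpha & \partial_\ka\tilde\alpha \\ \partial_k^2\tilde\alpha & \partial_k\partial_\ka\tilde\alpha\end{pmatrix}$; the first row's first entry vanishes, so the determinant is $-\partial_\ka\tilde\alpha\cdot\partial_k^2\tilde\alpha$, which is nonzero since $\partial_k^2\tilde\alpha(0,k_c,0,0) = \kap\,\partial_k^2 d(\mu_c,k_c) < 0$ by Hypothesis~\ref{FH:H2}\ref{FH:iii} and $\partial_\ka\tilde\alpha \neq 0$ (this follows because the $\ka$-dependence enters the coefficients $\widetilde\ka_\eps^\pm,\widetilde m$ nontrivially — I would compute $\partial_\ka\tilde\alpha(0,p,0,0)$ explicitly and confirm it is nonzero for $p=k_c$). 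The Implicit Function Theorem then produces $C^1$ functions $\ka(\sigma), k_c(\sigma)$ with $\ka(0)=0$, $k_c(0)=k_c$. Since the remaining assumptions \eqref{assum:alpha_doesnt_have_other_zeroes}$_\sigma$, \eqref{assum:d2_alpha_dk2_less_zero}$_\sigma$, \eqref{assum:omega_non_zero}$_\sigma$ are open conditions that hold at $\sigma=0$ (being inherited from Hypotheses~\ref{FH:H1}--\ref{FH:H2} and the positivity of $\omega_1$), they persist for small $\sigma>0$ by continuity; for \eqref{assum:alpha_doesnt_have_other_zeroes}$_\sigma$ one also needs the uniform bound $\tilde\alpha(0,p,\sigma,\ka) \to -\widetilde\ka_\eps^+ <0$ as $|p|\to\infty$ together with equicontinuity in $\sigma$ to rule out new zeros escaping to infinity. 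Then Theorem~\ref{thm:steady_state_bifurcation} applied to \eqref{eq:basic_rescaled} gives $u_{\eps,\delta,\sigma}$.

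For (T3), the convergences $\ka(\sigma)\to 0$ and $k_c(\sigma)\to k_c$ are immediate from the IFT construction; $\omega(\sigma)\to\omega_0$ and $\Omega_\sigma\to\Omega_0$ follow from continuity of $\tilde\alpha$ and its derivatives in $\sigma$ evaluated at the converging arguments, together with continuity of $\widehat{a^-}$ and its second derivative (using $\partial_\eps\tilde\alpha(0,k_c,0,0) = -m\widehat{a^-}(k_c)$ and the identification $\partial_k^2\tilde\alpha(0,k_c,0,0) = -2\kap(1+\tfrac{\mu_c}{2}\partial_k^2\widehat{a^-}(k_c))$, exactly matching $\Omega_0$). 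Finally, for the norm estimate: both $u_{\eps,\delta,\sigma}$ and $u_{\eps,\delta}$ have the common leading term $\theta + 2\sqrt{\Omega/\omega}\cos((k_\ast+\delta)x)$ plus an $o(|\eps|^{1/2}+|\delta|)$ remainder, with the amplitudes $\sqrt{\Omega_\sigma/\omega(\sigma)} \to \sqrt{\Omega_0/\omega_0}$ and wave numbers $k_c(\sigma)\to k_c$; taking $\sigma\to 0$ the two leading terms coincide and the difference is controlled by the remainders, giving $\lim_{\sigma\to 0}\|u_{\eps,\delta,\sigma}-u_{\eps,\delta}\| = o(|\eps|^{1/2}+|\delta|)$. \textbf{The main obstacle} I anticipate is making the last step genuinely rigorous rather than formal: the $o(|\eps|^{1/2}+|\delta|)$ remainders in Theorem~\ref{thm:steady_state_bifurcation} come from separate applications of the Implicit Function Theorem whose constants depend on $\sigma$ a priori, so one must check that the $\eps_0$, the neighbourhoods, and the error bounds in the Lyapunov–Schmidt reduction can be chosen uniformly in $\sigma$ for $\sigma$ near $0$ — this requires revisiting the proof of Theorem~\ref{thm:steady_state_bifurcation} to see that all the estimates (invertibility of $(1-P)\Aek$ on $\mathrm{ran}(\Ac)$, the bound $\tfrac1c\|\Ac^{-1}f\|\le\|f\|\le c\|\Ac^{-1}f\|$, smoothness of $F$) are locally uniform in the parameter $\sigma$, which follows from the joint continuity of $\tilde\alpha$ but needs to be stated carefully.
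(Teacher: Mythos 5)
Your route to (T1) contains a genuine gap: the step ``Theorem~\ref{thm:steady_state_bifurcation} applied to \eqref{eq:laplace_nonloc_reaction} directly yields the solution $u_{\eps,\delta}$'' is not a legitimate application of that theorem. Theorem~\ref{thm:steady_state_bifurcation} is a statement about the doubly-nonlocal equation \eqref{eq:basic}: its hypotheses \eqref{assum:assumptions_basic}--\eqref{assum:omega_non_zero} are conditions on integrable kernels $a^\pm$, and its proof works with the bounded self-adjoint convolution operator $\Aek$ on $L^2_{2\pi}(\R)$. The limiting equation \eqref{eq:laplace_nonloc_reaction} has the unbounded operator $\gamma\kap\partial^2_x$ in place of $\kape(a^+_k\ast\cdot-\cdot)$ and is not an instance of \eqref{eq:basic} for any admissible kernel, so the theorem cannot be invoked; to follow your plan you would have to redo the whole Lyapunov--Schmidt reduction for a second-order operator, i.e.\ essentially re-prove the very result the section is meant to compare with. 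The paper instead gets (T1) in one line from the quoted Faye--Holzer result, Theorem~\ref{thm:FH:1_1}: with $\gamma=1$, the scaling $u=\theta U$ maps solutions of \eqref{eq:laplace_nonloc_reaction_normalized} with $\mu=\mu_c+\tilde\eps$, $\tilde\eps=\frac{m\eps}{\kap(1+\eps)}$, to solutions of \eqref{eq:laplace_nonloc_reaction} with $\kap,\kam$ replaced by $\kape,\kame$, and $\Omega_0$, $\omega_0$ and the admissible range of $\delta^2$ are then read off from Theorem~\ref{thm:FH:1_1} after substituting $\tilde\eps\sim\frac{m}{\kap}\eps$. Your bookkeeping identities ($\partial_\eps\alpha(0,k_c)=-m\widehat{a^-}(k_c)$, $\partial^2_k\alpha(0,k_c)=-2\kap\bigl(1+\tfrac{\mu_c}{2}\partial^2_k\widehat{a^-}(k_c)\bigr)$, matching $\omega_0\theta^2$ against \eqref{eq:FH:omega}) are exactly the computation needed to reconcile the two formulas, but the existence statement itself must come from Theorem~\ref{thm:FH:1_1}, which your plan never invokes.

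For (T2) and (T3) your argument coincides with the paper's: the continuous extension of $\tilde{\alpha}$ to $\sigma=0$, the Implicit Function Theorem applied to the two-equation system \eqref{eq:k_and_sigma} in the unknowns $(k,\ka)$ with parameter $\sigma$ --- your reduction of the Jacobian to $-\partial_\ka\tilde{\alpha}\cdot\partial^2_k\tilde{\alpha}\neq0$, with $\partial_\ka\tilde{\alpha}(0,k_c,0,0)=-\gamma k_c^2$, is the same determinant the paper evaluates via Hypothesis~\ref{FH:H2} --- persistence of the open conditions \eqref{assum:alpha_doesnt_have_other_zeroes}$_\sigma$, \eqref{assum:d2_alpha_dk2_less_zero}$_\sigma$, \eqref{assum:omega_non_zero}$_\sigma$ for small $\sigma$ (including the control of $\tilde{\alpha}$ at large wave numbers, which the paper handles by ``possibly redefining $\sigma_0$''), and then Theorem~\ref{thm:steady_state_bifurcation} applied to \eqref{eq:basic_rescaled}; the limits $\ka(\sigma)\to0$, $k_c(\sigma)\to k_c$, $\omega(\sigma)\to\omega_0$, $\Omega_\sigma\to\Omega_0$ follow as you say. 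Two points remain: the paper also checks that the $\delta$-range in (T1) is compatible with \eqref{assum:delta_less_eps}$_\sigma$ (the final chain of inequalities relating $\partial_\mu d(\mu_c,k_c)$ to $\partial_\eps\tilde{\alpha}$), which you mention only in passing; and your concern about the $\sigma$-uniformity of the $o(|\eps|^{1/2}+|\delta|)$ remainders in the final norm comparison is well taken --- the paper is very terse there --- so spelling out that uniformity (locally uniform invertibility bounds and smoothness in the Lyapunov--Schmidt reduction) would indeed be needed for a fully detailed (T3).
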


\begin{proof}
	First note that if $U(x)$ solves \eqref{eq:laplace_nonloc_reaction_normalized} and $\gamma=1$, then $u(x)=\theta U(x)$ solves \eqref{eq:laplace_nonloc_reaction}, where $\mu = \frac{\kap{-}m}{\kap}$, $\theta = \frac{\kap-m}{\kam}$.
	Next, since $\omega_1>0$ and Hypotheses~\ref{FH:H1} and \ref{FH:H2} hold, then Theorem~\ref{thm:FH:1_1} implies the statement \ref{itm:connction_to_FH2015_statement1} with 
	\begin{equation}\label{eq:eps_tilde_eps}
		\tilde{\eps}=\frac{m\eps}{\kap(1+\eps)} \sim \frac{m}{\kap}\eps+\cO (\eps^2),\quad \eps\to0. 
	\end{equation}
 Next we apply the Implicit Function Theorem to the following equation at $(0,k_c,0,0)$,
 \begin{equation}\label{eq:k_and_sigma}
	\begin{cases}
	\widetilde{\alpha}(0, k, \sigma, \ka) = 0,\\
	\partial_k \widetilde{\alpha}(0, k, \sigma, \ka) =0.
\end{cases}
\end{equation}
By Hypothesis~\ref{FH:H2}, the following Jacobian matrix is non-degenerate at $(0,k_c,0,0)$,
\begin{equation}\label{eq:determinant}
	\frac{1}{2\kap} \lim_{\substack{\sigma \to 0_+\\ \ka\to 0\ }} \left| 
	 	\begin{array}{cc} \partial_\ka \at & \partial_k \at \\
											\partial_{\ka k} \at & \partial^2_{k} \at
		\end{array}
		\right| = k_c \partial_k d(0,k_c,\mu_c) - \frac{k_c^2}{2} \partial^2_{k} d(0,k_c,\mu_c) > 0,
\end{equation}
where the function $d$ is defined by \eqref{eq:FH:def_of_d}. Indeed, we calculate
\begin{align*}
	\partial_\ka \at(0,k,\sigma,\ka) &= \frac{\widehat{a^+}(k\sigma)-1}{\sigma^2} \to - \gamma k^2,\quad \sigma\to0_+,\\
	\partial_k \at(0,k,\sigma,\ka) &= (\kap{+}\ka) \int_\R \frac{-\i y\sigma}{\sigma^2} \e^{-\i k\sigma y} a^+(y) ~\txtd y - (\kap{-}m) \int_\R (-\i y) \e^{-\i ky}a^-(y) ~\txtd y\\
	&\to - 2\kap\gamma k + (\kap{-}m) \int_\R  \i y \e^{-\i ky}a^-(y)~\txtd y, \quad \sigma\to0_+,\ \ka \to 0,
\end{align*}
\begin{align}
	\partial_{\ka k} \at(0,k,\sigma,\ka) &= \int_\R \frac{-\i y\sigma}{\sigma^2} \e^{-\i k\sigma y} a^+(y) ~\txtd y \to -2\gamma k,\quad \sigma\to0_+, \nonumber \\
	\partial^2_{k} \at(0,k,\sigma,\ka) &= (\kap{+}\ka) \int_\R \frac{(-\i y\sigma)^2}{\sigma^2} \e^{-\i k\sigma y} a^+(y) ~\txtd y \nonumber \\ 
	&\quad - (\kap{-}m) \int_\R (-\i y)^2 \e^{-\i ky}a^-(y) ~\txtd y, \nonumber \\ 
	&\to - 2\kap \gamma + (\kap{-}m) \int_\R  y^2 \e^{-\i ky}a^-(y)~\txtd y, \label{eq:d_at_d_kk}
\end{align}
as $\sigma\to0_+$, $\ka\to0$. 
Since $\gamma=1$, then at evaluating at $(0,k_c,0,0)$ these results yield
\begin{align*}
	\frac{1}{2\kap}[\partial_\ka\at\, \partial^2_{k k}\at - \partial_k \at\, \partial^2_{\kap k} \at] &= - \gamma^2 k_c^2 +\gamma\mu_c\int_\R (\frac{k_c^2 y^2}{2} + \i k_c y) \e^{-\i k_c y}a^-(y)~\txtd y \\ 
	&= k_c \partial_k d(0,k_c,\mu_c) - \frac{k_c^2}{2} \partial^2_{k} d(0,k_c,\mu_c) > 0.
\end{align*}
Thus \eqref{eq:determinant} is proven and by the Implicit Function Theorem there exist 
\[
\sigma_0>0\quad\text{ and }\quad k_c(\sigma),\ka(\sigma)\in C((-\sigma_0,\sigma_0)\to\R),
\]
which solve \eqref{eq:k_and_sigma} for all $\sigma\in(-\sigma_0,\sigma_0)$. Moreover, $k_c(0)=k_c$ and $\ka(0)=0$. Next, by \eqref{eq:k_and_sigma} and \eqref{eq:d_at_d_kk} we see that as $\sigma\to 0$ one has
\begin{align}
	\partial_\eps \tilde{\alpha}(0,k_c(\sigma),\sigma,\ka(\sigma)) &= \tilde{\alpha}(0,k_c(\sigma),\sigma,\ka(\sigma)) - m\widehat{a^-}(k_c(\sigma))  \to -m \widehat{a^-}(k_c); \label{eq:d_at_d_eps}\\ 
	\partial^2_{k} \tilde{\alpha}(0,k_c(\sigma),\sigma,\ka(\sigma)) &\to -2\kap - (\kap-m) \partial^2_{k} \widehat{a^-}(k_c) \nonumber \\
	&\qquad = -2 \kap(1+\frac{\mu_c}{2})\partial^2_{k}\widehat{a^-}(k_c). \label{eq:d2_at_d_kk}
\end{align}
As a result the limit $\sigma\to0_+$ indeed gives us
\begin{align*}
	\Omega_\sigma(\eps,\delta)\to -m \widehat{a^-}(k_c)\eps - \kap(1+\frac{\mu_c}{2})\partial^2_{k} \widehat{a^-}(k_c) \delta^2 = \Omega_0(\eps,\delta),
\end{align*}
which shows that the leading-order coefficient converges. Nest, we note that
\begin{equation}\label{eq:alpha_and_d}
	\tilde{\alpha} (0,k,0,0) = \kap d(\mu_c,k), \quad \mu_c = \frac{\kap-m}{\kap}. 
\end{equation}
Let us ensure that \eqref{assum:assumptions_basic}$_\sigma$--\eqref{assum:omega_non_zero}$_\sigma$ hold for all $\sigma\in(0,\sigma_0)$: 
\begin{itemize}
	\item \eqref{assum:assumptions_basic} and \eqref{assum:assumptions_basic}$_\sigma$ are equivalent; 
	\item \eqref{assum:spectrum_intersects_zero_ii}$_\sigma$ and \eqref{eq:dalpha_dk_zero}$_\sigma$ follow from \eqref{eq:k_and_sigma}; 
 	\item by \eqref{eq:alpha_and_d}, Hypothesis~\ref{FH:H2}~\ref{FH:iv} and $d(\mu_c,-\infty) = -\infty$ (possibly redefining $\sigma_0$) \eqref{assum:alpha_doesnt_have_other_zeroes}$_\sigma$ holds;
	\item similarly, \eqref{eq:alpha_and_d} and Hypothesis~\ref{FH:H2}~\ref{FH:iii} imply \eqref{assum:d2_alpha_dk2_less_zero}$_\sigma$;
\end{itemize}
Therefore Theorem~\ref{thm:steady_state_bifurcation} yields statement \ref{itm:connction_to_FH2015_statement2}.
Since $k_c(\sigma)\to k_c$ as $\sigma\to0$, then $\omega(\sigma)\to\omega_0$ as $\sigma\to0$.
To finish the proof of the statement \ref{itm:connction_to_FH2015_statement3} it is left to notice that by \eqref{eq:eps_tilde_eps}, \eqref{eq:d_at_d_eps} and \eqref{eq:d2_at_d_kk}  (c.f. \eqref{assum:delta_less_eps}$_\sigma$)
\[
	\tilde{\eps} \frac{ \partial_\mu d(\mu_c,k_c)}{-\frac{1}{2}\partial^2_{k} d(\mu_c,k_c)} = \tilde{\eps} \frac{-\widehat{a^-}(k_c)}{1+\frac{\mu_c}{2}\partial^2_{k}\widehat{a^-}(k_c)} \leq \frac{\eps m}{\kap} \frac{-\widehat{a^-}(k_c)}{1+\frac{\mu_c}{2}\partial^2_{k}\widehat{a^-}(k_c)}=	\eps	\frac{2\partial_\eps \at(0,k_c,0,0)}{-\partial^2_{k} \at(0,k_c,0,0)}.
\]
The proof is fulfilled.
\end{proof}

\begin{rem}\label{rem:on_scaling}
	It is worth to point out that typically a diffusive scaling is considered for $\ka=0$. We introduce $\ka\in\R$ to get an additional `degree of freedom' that allows us to choose $\ka=\ka(\sigma)$ such that the spectrum of the linearization of \eqref{eq:basic_rescaled} at $u\equiv\theta$ touches the imaginary axis \textit{for all} small $\sigma>0$ (c.f. \eqref{eq:k_and_sigma}).
\end{rem}


\section{On nonexistence of stationary solutions} 
\label{sec:nonexistence}

One might now ask, whether all the assumptions are really crucial to obtain a non-trivial bifurcating solution. Here we provide several results to indicate that one can easily find other parameter regimes, where no bifurcations occur. For the convenience of the reader, we formulate here a special case of \cite[Proposition 5.12]{FKT2015}.

\begin{prop}\label{prop:nonexistence_of_stationary_sol}
	Let the following assumptions hold
	\begin{gather*}
		\kap>m, \quad a^\pm(-x) \equiv a^\pm(x), \quad a^\pm\in L^\infty(\R),\quad \int[\R]|x|a^+(x)~\txtd x<\infty,\\ 
		\kap a^+(x) \geq (\kap{-}m)a^-(x),\ x\in\R,\quad \kap a^+(0) > (\kap{-}m)a^-(0)>0, 
	\end{gather*}
	Then there exist only two non-negative bounded solutions to \eqref{eq:basic}, namely $u\equiv0$ and $u\equiv\theta$.
\end{prop}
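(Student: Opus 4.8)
The plan is to reduce the nonlinear equation to a Liouville-type property for a recurrent random walk. By Corollary~\ref{cor:solution_is_buc}, any bounded non-negative solution $u$ lies in $\Buc$ and may be written as $u(x)=\kap(a^+\ast u)(x)/c(x)$ with $c(x):=m+\kam(a^-\ast u)(x)\ge m$. Setting $b:=\kap a^+-(\kap{-}m)a^-$, which is non-negative by hypothesis and satisfies $\int_\R b=m$, a direct rearrangement of \eqref{eq:basic} gives the identity $(b\ast u)(x)=m\,u(x)+\kam(a^-\ast u)(x)(u(x)-\theta)$; this is the main algebraic tool, and I record that the positivity hypothesis provides $r_0,\eta_0>0$ with $b\ge\eta_0$ and $a^-\ge\eta_0$ on $[-r_0,r_0]$.

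First I would establish the a priori bound $\sup_\R u\le\theta$ by a maximum-principle argument. Take $x_n$ with $u(x_n)\to M:=\sup u$ and assume $M>\theta$. Since $b\ge0$ and $\int b=m$, the left side of the identity is at most $mM$, while the last term on the right is non-negative for large $n$; hence $(b\ast u)(x_n)\to mM$ and $\kam(a^-\ast u)(x_n)(u(x_n)-\theta)\to0$, so $(a^-\ast u)(x_n)\to0$. From $\int b(y)\bigl(M-u(x_n-y)\bigr)\,\txtd y\to0$ and $b\ge\eta_0$ near the origin, uniform continuity of $u$ upgrades this $L^1$-smallness to $u(x_n-y)\to M$ uniformly for $|y|\le r_0$; then $a^-\ge\eta_0$ near the origin forces $(a^-\ast u)(x_n)\ge 2\eta_0 r_0 M-o(1)>0$, a contradiction. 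Consequently $u\le\theta$, whence $c(x)\le m+\kam\theta=\kap$ and therefore the key inequality $u\ge a^+\ast u$ holds on all of $\R$.

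This inequality says that $u$ is a bounded, continuous, excessive function for the random walk $S_n$ with i.i.d.\ increments of law $a^+$. Because $a^+$ is a symmetric probability density with $\int_\R|x|a^+(x)\,\txtd x<\infty$, its mean is zero, so $S_n$ is recurrent by the Chung--Fuchs theorem; as $a^+$ has a density the walk is spread out, hence enters every open interval almost surely from any starting point. I would then deduce that $u$ is constant: fix $\eps>0$, pick an open interval $I$ with $u>M-\eps$ on $I$, and for arbitrary $x$ let $\tau$ be the first hitting time of $I$ by $x-S_n$; then $u(x-S_{n\wedge\tau})$ is a bounded supermartingale, so letting $n\to\infty$ (bounded convergence, $\tau<\infty$ a.s.) one gets $u(x)\ge\mathbb{E}[u(x-S_\tau)]\ge M-\eps$, and $\eps\to0$ gives $u\equiv M$. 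Finally, a constant solution $u\equiv C$ of \eqref{eq:basic} satisfies $C(\kap-m-\kam C)=0$, so $C\in\{0,\theta\}$.

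I expect the a priori bound $\sup u\le\theta$ to be the main obstacle: the near-supremum argument must be carried out carefully because the supremum need not be attained on $\R$, and one has to combine the algebraic identity with the local lower bounds on $b$ and $a^-$ and with uniform continuity to turn $L^1$-smallness on a fixed window into uniform smallness. The Liouville step is where the symmetry and finite first moment of $a^+$ are genuinely used (they are precisely what yields recurrence), but once recurrence is available it is a routine optional-stopping argument. Since the statement is a special case of \cite[Proposition~5.12]{FKT2015}, one may alternatively just verify that the present hypotheses imply those of that proposition and invoke it directly.
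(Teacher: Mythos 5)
The paper does not actually prove Proposition~\ref{prop:nonexistence_of_stationary_sol}: it is quoted as a special case of \cite[Proposition~5.12]{FKT2015}, so your closing remark (verify the hypotheses and invoke that result) is literally what the authors do. Your self-contained argument is therefore a different route by default, and it is a sound one: the identity $(b\ast u)=mu+\kam(a^-\ast u)(u-\theta)$ with $b=\kap a^+-(\kap{-}m)a^-\ge0$, $\int_\R b=m$, the near-maximum argument giving $\sup u\le\theta$, the consequent inequality $u\ge a^+\ast u$ (since $m+\kam(a^-\ast u)\le\kap$), and the Liouville step for nonnegative bounded superharmonic functions of a mean-zero, finite-first-moment (hence Chung--Fuchs recurrent), absolutely continuous random walk are all correct; this is also exactly where the symmetry and moment hypotheses on $a^+$ enter. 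The optional-stopping computation and the reduction of constants to $\{0,\theta\}$ are fine.

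The one step you should not treat as automatic is the very first thing you ``record'': that the hypothesis $\kap a^+(0)>(\kap{-}m)a^-(0)>0$ provides $r_0,\eta_0>0$ with $b\ge\eta_0$ and $a^-\ge\eta_0$ on $[-r_0,r_0]$. The proposition only assumes $a^\pm\in L^\infty(\R)$, so a strict inequality at the single point $x=0$ does not, by itself, yield lower bounds on a neighbourhood; your maximum-principle step (turning $\int b(y)(M-u(x_n-y))\,\txtd y\to0$ into uniform closeness of $u$ to $M$ on a window, and then bounding $(a^-\ast u)(x_n)$ from below) genuinely needs co-located positive mass of $b$ and $a^-$ near the origin. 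You should therefore either assume continuity of $a^\pm$ at $0$ (or an essential lower bound of $b$ and $a^-$ on a neighbourhood of $0$), or note explicitly that the pointwise condition at $0$ is to be read in that sense — which is clearly the intended interpretation, since for $L^\infty$ densities the literal pointwise value at one point carries no information. With that reading made explicit, your proof is complete; without it, the a priori bound $u\le\theta$ is the step that would fail, exactly as you anticipated.
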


In fact, one can even describe that nothing can happen ``between'' the two homogeneous solutions, even for other parameter ranges as the following results shows:

\begin{prop}
	Let $a^+\in L^1(\R\to\R_+)$ be such that
	\[
		\int[\R] a^+(y)~\txtd y = 1, \qquad \int[\R] |y| a^+(y)~\txtd y<\infty.
	\]
	Then, for any $l\in(0,\theta)$, there does not exist a non-zero solution $u\in L^\infty(\R)$ to \eqref{eq:basic}, such that $0 \leq u(x) \leq l$, $x\in\R$.
\end{prop}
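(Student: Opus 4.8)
The plan is to show that any bounded solution $u$ with $0\le u\le l<\theta$ must in fact satisfy $u\equiv 0$ by exploiting the sign of the reaction term when $u$ stays strictly below $\theta$. The key algebraic observation is that $\theta=\frac{\kap-m}{\kam}$, so for $0\le u(x)\le l<\theta$ we have $\kam u(x)(a^-\ast u)(x)\le \kam l\cdot l<\kam\theta\cdot\theta$... but more usefully, rewriting \eqref{eq:basic} as
\[
	u(x)=\frac{\kap(a^+\ast u)(x)}{m+\kam(a^-\ast u)(x)},
\]
and noting $a^-\ast u\le l$, gives the pointwise lower bound
\[
	u(x)\ \ge\ \frac{\kap}{m+\kam l}(a^+\ast u)(x).
\]
Since $a^+$ is a probability density, $a^+\ast u$ has the same supremum as $u$ in a suitable averaged sense; the idea is to push this inequality to a contradiction with $\|u\|_\infty\le l<\theta$ unless $u\equiv 0$.

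First I would set $M:=\|u\|_\infty=\esssup_{x}u(x)$ and suppose $M>0$ for contradiction. Using $a^-\ast u\le M$ (again because $a^-$ is a probability density, if we know it too is normalized — here only $a^+$ is assumed normalized in the statement, but $a^-$ is a probability density by the standing setup of \eqref{eq:basic}), we get $u(x)\ge \frac{\kap}{m+\kam M}(a^+\ast u)(x)$. Now I would iterate: taking a sequence $x_n$ with $u(x_n)\to M$, the quantity $(a^+\ast u)(x_n)$ is bounded above by $M$, so passing to the limit forces $(a^+\ast u)(x_n)\to M$ as well, i.e. the convolution average of $u$ near $x_n$ approaches its essential supremum. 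The $L^1$-decay condition $\int|y|a^+(y)\,dy<\infty$ together with $a^+$ being a genuine density (mass $1$, hence positive on a set of positive measure) is exactly what is needed to conclude that $u$ itself must be essentially constant equal to $M$ on larger and larger regions — a standard "averaging forces the value to be the max" argument, made rigorous via a maximum-principle / strong-maximum-principle style comparison as in \cite{FKT2015}. Then $u\equiv M$ is a constant solution of \eqref{eq:basic}, so $M\in\{0,\theta\}$; since $M\le l<\theta$ we get $M=0$, a contradiction with $M>0$.

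The cleanest route, and the one I would actually write, is to invoke the classification of constant solutions directly: reduce the problem to showing $u$ must be constant. To do this, I would use the Duhamel/comparison framework underlying \eqref{eq:basic} — the equation $u = \frac{\kap(a^+\ast u)}{m+\kam(a^-\ast u)}$ says $u$ is a fixed point of the monotone operator $T_v[u]:=\frac{\kap(a^+\ast u)}{m+\kam(a^-\ast v)}$, and monotone-iteration from the supersolution $\overline u\equiv l$ (which is a strict supersolution precisely because $l<\theta$, so that $\kap - m - \kam l>0$ fails in the right direction — here one checks $\kap(a^+\ast l)-ml-\kam l(a^-\ast l)=l(\kap-m-\kam l)=\kam l(\theta - l)>0$, confirming $l$ is a *strict* subsolution, so one iterates from above starting at the top of the allowed range) drives the iteration monotonically down to the smallest solution below $l$, which by the constant-solution classification \eqref{eq:constant_solutions} is $u\equiv 0$.

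The main obstacle I anticipate is making the "averaging forces constancy" step fully rigorous on the whole line $\R$ without compactness: one cannot simply pick a maximizing point, so one must argue with a maximizing sequence and use the $\int|y|a^+(y)\,dy<\infty$ tail bound to show the mass of $a^+$ concentrated where $u$ is close to $M$ can be made arbitrarily close to $1$, propagating the near-maximality outward. This is precisely the kind of estimate handled in \cite[Proposition 5.12]{FKT2015} and \cite{FKT2015}, so I would lean on that machinery rather than redo it; the parameter condition $l<\theta$ enters only through the strict sign $\theta-l>0$ that makes $l$ an admissible strict sub-solution and excludes $u\equiv\theta$ from the interval $[0,l]$.
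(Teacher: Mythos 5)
There is a genuine gap: neither of your two routes is actually carried out, and both rest on steps that fail under the stated hypotheses. In the first route, the claim that a maximizing sequence $u(x_n)\to M$ forces $(a^+\ast u)(x_n)\to M$ is backwards: from $u\le l$ you get $u(x)\ge \frac{\kap}{m+\kam l}(a^+\ast u)(x)$, hence $(a^+\ast u)(x_n)\le \frac{m+\kam l}{\kap}\,u(x_n)$ with $\frac{m+\kam l}{\kap}<1$, so the convolution stays bounded \emph{away} from $M$ and the ``averaging forces constancy'' heuristic never starts. Moreover, you cannot ``lean on the machinery'' of \cite[Proposition 5.12]{FKT2015} (Proposition~\ref{prop:nonexistence_of_stationary_sol} in this paper): that result requires symmetric bounded kernels and the domination $\kap a^+\ge(\kap-m)a^-$, which is exactly what provides a comparison principle; none of this is assumed here, where the only hypotheses are that $a^+$ has mass one and a finite first moment, with no relation at all between $a^+$ and $a^-$. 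The same objection defeats the second, ``cleanest'' route: the doubly-nonlocal equation has no comparison principle in general, the full fixed-point map $u\mapsto \kap(a^+\ast u)/(m+\kam\,a^-\ast u)$ is not order-preserving (the denominator also grows with $u$), and you yourself compute that the constant $l$ is a strict \emph{sub}solution, from which a monotone iteration would move up toward $\theta$, not down to $0$; so the asserted downward iteration to ``the smallest solution below $l$'' is unjustified both in direction and in the tools it invokes.

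The paper's proof is far more elementary and uses precisely the two stated hypotheses on $a^+$. From $0\le u\le l<\theta$ one gets the pointwise linear inequality $(a^+\ast u)(x)-u(x)\le -\frac{\kam}{\kap}(\theta-l)\,u(x)$. If $u\in L^1(\R)$, integrating over $\R$ and using $\int_\R a^+=1$ (Fubini) gives $0\ge \frac{\kam}{\kap}(\theta-l)\int_\R u$, so $u\equiv0$. If $u\notin L^1(\R)$, integrating over $[-r,r]$ and using the finite first moment yields the bound $\int_{-r}^{r}\bigl((a^+\ast u)(x)-u(x)\bigr)\,\txtd x \ge -2\|u\|_\infty\int_\R |y|\,a^+(y)\,\txtd y$, uniformly in $r$, while the right-hand side $-\frac{\kam}{\kap}(\theta-l)\int_{-r}^{r}u$ tends to $-\infty$ --- a contradiction. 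The first-moment condition is thus used to control the boundary discrepancy of $\int_{-r}^{r}(a^+\ast u-u)$, not to run any maximum-principle or sliding argument. If you want to salvage your proposal, you should replace both sketches by this integration argument (or supply, under only the stated hypotheses, a complete iteration argument such as $u\ge\bigl(\tfrac{\kap}{m+\kam l}\bigr)^n (a^+)^{\ast n}\ast u$ together with a quantitative lower bound on $(a^+)^{\ast n}\ast u$, which is substantially more work).
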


\begin{proof}
We argue by contradiction and suppose there exists $u\in L^\infty(\R)$ satisfying \eqref{eq:basic} and $0\leq u(x) \leq l$ for $x\in\R$. Then, we must have
	\[
		0 \geq \kap (a^+\ast u)(x) -\kam l u(x) - m u(x), \quad x\in\R.
	\]
This implies
	\begin{equation}\label{eq:subsol_to_lin_eq}
		- \frac{\kam}{\kap}(\theta - l) u(x) \geq (a^+\ast u)(x) - u(x), \quad x\in\R.
	\end{equation}
We distinguish two cases. Suppose first that $u\in L^1(\R)$, then we get
	\[
		0\geq - \frac{\kam}{\kap} (\theta-l) \int[\R]u(x)~\txtd x \geq \int[\R] \int[\R] a^+(x-y)u(y)~\txtd y~\txtd x - \int[\R] u(x)~\txtd x = 0, 
	\]
which implies $u\equiv 0$ as $a^+$ has mass one. For the second case let $u\not\in L^1(\R)$. For any $r>0$ we compute 
\begin{align*}
		\int[-r][r] \big( &(a^+\ast u)(x) - u(x) \big) ~\txtd x = \int[\R]a^+(y) \int[-r][r] (u(x-y) - u(x)) ~\txtd x~\txtd y\\
			&\geq \int[|y|\leq r] a^+(y) \Big( \int[r][r-y] u(x)~\txtd x - \int[-r-y][-r] u(x)~\txtd x \Big) - 2r \|u\|_\infty \int[|y|>r] a^+(y)~\txtd y\\
			&\geq -2 \|u\|_\infty \int[|y|\leq r] a^+(y)|y| ~\txtd y -2 r \|u\|_\infty \int[|y|>r] \frac{|y|}{r}a^+(y)~\txtd y = -2\|u\|_\infty \int[\R] |y| a^+(y) ~\txtd y.
	\end{align*}
As a result, by \eqref{eq:subsol_to_lin_eq},
	\[
		-\frac{\kam}{\kap}(\theta-l)\cdot\infty \geq -2\|u\|_\infty \int[\X] |y|a^+(y)~\txtd y > -\infty,
	\]
where the left-hand side is infinite because $u\not\in L^1(\R)$. Therefore, we have obtained again a contradiction.
\end{proof}

To describe the stationary solution set also near $u\equiv \theta$, we need an auxillary result. The following theorem follows from \cite[$5.1.6$ and Remark $5.1.1$]{OR1970}:

\begin{thm} 
\label{thm:homeomorphism}
Let E be a Banach space, $A, A^{-1}$ be linear continuous operators in $E$, $G:E \to E$, such that
\begin{align*}
		0< c < \|A^{-1}\|^{-1};\qquad \|Gx-Gy\|_E \leq c \|x-y\|_E,\ x,y\in B_\delta(x_0),
\end{align*}
where $B_\delta(x_0) = \{ x\in E : \|x-x_0\|_E \leq \delta\}$. Then $A-G$ is a homeomorphism between $B_\delta(x_0)$ and $(A-G)(B_\delta(x_0))$.
\end{thm}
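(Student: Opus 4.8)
The plan is to run the standard ``Lipschitz perturbation of an invertible linear operator'' argument. Note first that the theorem only claims that $A-G$ is a homeomorphism \emph{onto its own image} $(A-G)(B_\delta(x_0))$, so surjectivity onto the image is automatic and no fixed-point or open-mapping argument is required; everything reduces to two elementary estimates on the ball $B_\delta(x_0)$, which is precisely where $G$ is assumed Lipschitz.

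First I would record the lower bound for $A$: since $A^{-1}$ is bounded, $\|z\|_E=\|A^{-1}(Az)\|_E\le\|A^{-1}\|\,\|Az\|_E$ for every $z\in E$, hence $\|Az\|_E\ge\|A^{-1}\|^{-1}\|z\|_E$. Combining this with the contraction hypothesis on $G$, for all $x,y\in B_\delta(x_0)$ one obtains
\[
	\|(A-G)x-(A-G)y\|_E\ \ge\ \|A(x-y)\|_E-\|Gx-Gy\|_E\ \ge\ \big(\|A^{-1}\|^{-1}-c\big)\,\|x-y\|_E,
\]
and the coefficient $c':=\|A^{-1}\|^{-1}-c$ on the right is strictly positive by hypothesis. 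This single inequality does most of the work: it shows that $A-G$ is injective on $B_\delta(x_0)$, and it shows that the inverse map $(A-G)^{-1}\colon(A-G)(B_\delta(x_0))\to B_\delta(x_0)$ is Lipschitz with constant $1/c'$, hence continuous.

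It then remains only to observe that $A-G$ itself is continuous on $B_\delta(x_0)$: $A$ is a bounded linear operator, hence continuous, and $G$ is Lipschitz with constant $c$ on $B_\delta(x_0)$, so their difference is continuous there. Therefore $A-G$ is a continuous bijection of $B_\delta(x_0)$ onto $(A-G)(B_\delta(x_0))$ with continuous inverse, i.e.\ a homeomorphism, which is the assertion. There is essentially no obstacle; the only points needing a little care are deriving $\|Az\|_E\ge\|A^{-1}\|^{-1}\|z\|_E$ from the boundedness of $A^{-1}$, and keeping every estimate confined to $B_\delta(x_0)$ since $G$ is only assumed Lipschitz on that ball. (If one additionally wanted to describe the image — for instance that $(A-G)(B_\delta(x_0))$ contains a ball about $(A-G)x_0$ — one would solve $(A-G)x=u$ by applying the Banach fixed-point theorem to $x\mapsto A^{-1}(u+Gx)$, whose contraction constant $\|A^{-1}\|c<1$ is exactly the hypothesis; but this is not part of the statement.)
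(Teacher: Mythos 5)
Your proof is correct: the lower bound $\|(A-G)x-(A-G)y\|_E\ge(\|A^{-1}\|^{-1}-c)\|x-y\|_E$ on $B_\delta(x_0)$ gives injectivity and a Lipschitz inverse, and continuity of $A-G$ is immediate, which is all the statement asks since it only claims a homeomorphism onto the image. The paper itself offers no proof, merely citing Ortega--Rheinboldt (5.1.6 and Remark 5.1.1), and your argument is exactly the standard elementary estimate underlying that reference, so there is nothing to compare beyond noting that your version is self-contained.
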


\begin{prop}\label{prop:local_nonexist_at_theta}
Let $p>0$, $\alpha$ be defined by \eqref{eq:def_alpha} and 
\begin{equation}\label{eq:alpha_is_negative}
		\gamma_p := -\sup_{j\in\Z} \alpha(0,\tfrac{2\pi j}{p}) >0.
\end{equation}
	Then for any $\delta<\frac{\gamma_p}{2\kam I_p(a^-)}$, there does not exist a solution to \eqref{eq:basic} in
\[
	\{u\in L^2_p(\R) : \|u-\theta\|_{L^2_p(\R)} \leq \delta\}\backslash\{\theta\}.
\]
\end{prop}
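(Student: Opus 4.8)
The plan is to reduce the equation $F(v,0,k)=0$ on the scaled space $L^2_{2\pi}(\R)$ — equivalently \eqref{eq:basic_shifted} on $L^2_p(\R)$ — to a fixed-point statement for a contraction, and then invoke Theorem~\ref{thm:homeomorphism} to conclude that the only solution near $w\equiv 0$ (i.e. $u\equiv\theta$) is the trivial one. First I would write $w:=u-\theta$, so that $w$ solves \eqref{eq:basic_shifted}, namely $\Aek w = \kame\, w\,(a^-_k\ast w)$ with $\eps=0$ and $k=\tfrac{2\pi}{p}$, or more precisely the unscaled version $\Aoo w = \kam\, w\,(a^-\ast w)$ where $\Aoo$ denotes the operator $\kap a^+\ast\,\cdot\,-(\kap-m)a^-\ast\,\cdot\,-\kap\,\cdot$ acting on $L^2_p(\R)$. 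The hypothesis \eqref{eq:alpha_is_negative} says exactly that, in Fourier coordinates on $L^2_p(\R)$, $\Aoo$ acts diagonally with eigenvalues $\alpha(0,\tfrac{2\pi j}{p})\le -\gamma_p<0$, hence $\Aoo$ is invertible with $\|\Aoo^{-1}\|\le \gamma_p^{-1}$.

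The second step is the contraction estimate for the nonlinearity. Define $G(w):=\kam\, w\,(a^-\ast w)$ on the ball $B_\delta(\theta)\subset L^2_p(\R)$ — really on $w\in B_\delta(0)$ after the shift. Using Proposition~\ref{prop:w_a_w_in_l2p}, for $w_1,w_2$ in this ball I would estimate
\[
\|G(w_1)-G(w_2)\|_{L^2_p} \le \kam\big(\|w_1(a^-\ast(w_1-w_2))\|_{L^2_p} + \|(w_1-w_2)(a^-\ast w_2)\|_{L^2_p}\big)
\le \kam I_p(a^-)\big(\|w_1\|_{L^2_p}+\|w_2\|_{L^2_p}\big)\|w_1-w_2\|_{L^2_p},
\]
so $G$ is Lipschitz on $B_\delta(0)$ with constant at most $2\kam I_p(a^-)\delta$. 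Thus if $\delta<\tfrac{\gamma_p}{2\kam I_p(a^-)}$ then this Lipschitz constant is strictly below $\gamma_p\le\|\Aoo^{-1}\|^{-1}$.

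The third step applies Theorem~\ref{thm:homeomorphism} with $E=L^2_p(\R)$, $A=\Aoo$, the nonlinearity $G$ as above, $x_0=0$, and $c:=2\kam I_p(a^-)\delta$: the hypotheses $0<c<\|A^{-1}\|^{-1}$ and the Lipschitz bound on $B_\delta(0)$ are met, so $A-G$ is a homeomorphism from $B_\delta(0)$ onto its image. In particular $A-G$ is injective on $B_\delta(0)$, and since $(A-G)(0)=0$, the only solution of $(A-G)(w)=0$ in $B_\delta(0)$ is $w=0$, i.e. the only solution of \eqref{eq:basic} in $\{u:\|u-\theta\|_{L^2_p}\le\delta\}$ is $u\equiv\theta$; removing $\theta$ from the set leaves no solutions, as claimed.

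The only real care needed — and the place I would slow down — is the bookkeeping around scaling and the use of $I_p(a^-)$: one must check that $G$ genuinely maps $L^2_p(\R)$ into itself with the stated bound (which is exactly the content of \eqref{eq:l2p_linf_est}, requiring $I_p(a^-)<\infty$, guaranteed by the tail bound in \eqref{assum:assumptions_basic} though here it would be cleaner to just assume $I_p(a^-)<\infty$ implicitly via that decay), and that the Fourier diagonalization giving $\|\Aoo^{-1}\|\le\gamma_p^{-1}$ is valid on the $p$-periodic space, where the relevant wave numbers are $\tfrac{2\pi j}{p}$, $j\in\Z$ — matching the supremum in \eqref{eq:alpha_is_negative} precisely. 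No genuine obstacle arises; the argument is a clean Banach-fixed-point / local-homeomorphism argument once these identifications are in place.
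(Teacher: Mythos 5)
Your proposal is correct and follows essentially the same route as the paper's proof: shift to $w=u-\theta$, diagonalize the linear part in the Fourier basis of $L^2_p(\R)$ to get $\|A^{-1}\|=\gamma_p^{-1}$, use \eqref{eq:l2p_linf_est} to obtain the Lipschitz constant $2\kam I_p(a^-)\delta$ for $G(w)=\kam\, w\,(a^-\ast w)$ on $B_\delta(0)$, and conclude local injectivity via Theorem~\ref{thm:homeomorphism}. No discrepancies of substance.
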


\begin{proof}
Consider $w=u-\theta$. If $u$ satisfies \eqref{eq:basic}, then $w$ satisfies \eqref{eq:basic_shifted}. We apply Theorem \ref{thm:homeomorphism} to the following operators
	\[
		Af = \kap a^+\ast f - (\kap{-}m)a^-\ast f- \kap\ast f, \quad Gf = \kam fa^-\ast f.
	\]
	By \eqref{eq:def_alpha}, for any $f\in L_p^2(\R)$ we have a Fourier decomposition 
	\[
		f = \sum_{j\in\Z} f_j \e^{\i \frac{2\pi j}{p} x},\ f_j= (f,\e^{\i \frac{2\pi j}{p} \cdot })_{L_p^2(\R)}; \quad Af = \sum_{j\in\Z} f_j \alpha(0,\tfrac{2\pi j}{p}) \e^{\i \frac{2\pi j}{p}x}.
	\]
	Hence, by \eqref{eq:alpha_is_negative}, we can also compute the inverse
	\[
		A^{-1}f = \sum_{j\in\Z} \frac{f_j }{\alpha(0,\tfrac{2\pi j}{p})} \e^{\i \frac{2\pi j}{p} x}; \quad \|A^{-1}\|  = \sup_{j\in\Z} \frac{1}{| \alpha(0,\tfrac{2\pi j}{p}) |} = \frac{1}{\gamma_p}. 
	\]
	By \eqref{eq:l2p_linf_est}, for any $f,g$, $\|f\|_{L_p^2(\R)} \leq \delta$, $\|g\|_{L_p^2(\R)} \leq \delta$, 
	\begin{align*}
		\tfrac{1}{\kam} \|Gf-Gg\|_{L_p^2(\R)} &\leq \|a^-{\ast }f\|_\infty \|f-g\|_{L^2_p(\R)} + \|g\|_{L_p^2(\R)} \|a^-{\ast }(f{-}g)\|_\infty \\ 
		&\leq 2 \delta  I_p(a^-) \|f-g\|_{L_p^2(\R)}.
	\end{align*}
	Thus, for any $\delta < \frac{\gamma_p}{2\kam I_p(a^-)}$, $A-G: B_\delta(0) \to (A-G)(B_\delta(0))$ is a homeomorphism. The proof is fulfilled.
\end{proof}

\begin{prop}\label{prop:radius_of_uniqueness_around_theta}
	Assume that for $J_\theta := \kap a^+ - \theta \kam a^-$, $\|J_\theta\|_1<\kap$.
	Then, for any $\delta < \frac{\kap - \|J_\theta\|_1}{2\kam}$, there does not exist a solution to \eqref{eq:basic} in
  \[
  	\{u\in L^\infty(\R) : \|u-\theta\|_\infty \leq \delta\}\backslash\{\theta\}.
  \]
\end{prop}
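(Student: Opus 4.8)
The plan is to follow the pattern of Proposition~\ref{prop:local_nonexist_at_theta}, replacing the periodic $L^2$ setting by $L^\infty(\R)$ and again invoking the homeomorphism criterion of Theorem~\ref{thm:homeomorphism}. First I would pass to the shifted unknown $w = u-\theta$. Since $\widehat{a^\pm}(0)=1$ and $\kam\theta = \kap-m$, the $0$-order terms cancel and $u$ solves \eqref{eq:basic} if and only if $w$ solves
\[
	\kap a^+\ast w - (\kap-m)a^-\ast w - \kap w - \kam w(a^-\ast w) = 0,\qquad x\in\R,
\]
that is $Aw - Gw = 0$, where $Af := J_\theta\ast f - \kap f$ and $Gf := \kam f(a^-\ast f)$, with $J_\theta = \kap a^+ - \theta\kam a^- = \kap a^+ - (\kap-m)a^-$ exactly as in the statement. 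It thus suffices to show that $w\equiv0$ is the only element of the closed ball $B_\delta(0) = \{w\in L^\infty(\R):\|w\|_\infty\leq\delta\}$ solving this equation, since unravelling the shift then yields the claim.

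Second, I would check that $A$ and $A^{-1}$ are bounded on $E = L^\infty(\R)$. Writing $Cf := J_\theta\ast f$, Young's inequality gives $\|C\|_{L^\infty\to L^\infty}\leq\|J_\theta\|_1$, so $A = -\kap(I - \kap^{-1}C)$ is bounded; and the hypothesis $\|J_\theta\|_1<\kap$ makes $\|\kap^{-1}C\|<1$, so $I-\kap^{-1}C$ is invertible via the Neumann series and
\[
	A^{-1} = -\kap^{-1}\sum_{n\geq0}\kap^{-n}C^{n},\qquad \|A^{-1}\|\leq\frac{1}{\kap-\|J_\theta\|_1},
\]
hence $\|A^{-1}\|^{-1}\geq\kap-\|J_\theta\|_1>0$.

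Third, I would establish the local Lipschitz bound for $G$. For $f,g$ with $\|f\|_\infty,\|g\|_\infty\leq\delta$, the identity $f(a^-\ast f)-g(a^-\ast g) = f\,\bigl(a^-\ast(f-g)\bigr) + (a^-\ast g)(f-g)$ combined with $\|a^-\ast h\|_\infty\leq\|a^-\|_1\|h\|_\infty = \|h\|_\infty$ (because $a^-$ is a probability density) yields $\|Gf-Gg\|_\infty\leq 2\kam\delta\|f-g\|_\infty$; in particular $G$ maps $L^\infty(\R)$ into itself. Therefore $c:=2\kam\delta$ satisfies $0<c<\|A^{-1}\|^{-1}$ precisely when $\delta<\frac{\kap-\|J_\theta\|_1}{2\kam}$, and Theorem~\ref{thm:homeomorphism} applied with $x_0=0$ shows $A-G$ is a homeomorphism from $B_\delta(0)$ onto its image, in particular injective. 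Since $(A-G)(0)=0$, there is no $w\in B_\delta(0)\setminus\{0\}$ solving $Aw-Gw=0$, which completes the argument.

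There is no real obstacle here: the two points deserving a line of care are the exact cancellation of the $0$-order terms when shifting to $w=u-\theta$ (which is where $\widehat{a^\pm}(0)=1$ and $\kam\theta=\kap-m$ are used, together with the identification $J_\theta\ast w = \kap a^+\ast w-(\kap-m)a^-\ast w$), and the remark that the assumption $\|J_\theta\|_1<\kap$ is exactly the condition guaranteeing invertibility of $I-\kap^{-1}C$ on $L^\infty(\R)$. Everything else is the same bookkeeping as in Proposition~\ref{prop:local_nonexist_at_theta}.
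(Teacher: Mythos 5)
Your proof is correct and follows essentially the same route as the paper: shift to $w=u-\theta$, apply Theorem~\ref{thm:homeomorphism} on $L^\infty(\R)$ with $Af=J_\theta\ast f-\kap f$ and $Gf=\kam f(a^-\ast f)$, and use the same local Lipschitz constant $2\kam\delta$ to conclude injectivity on $B_\delta(0)$. The only (harmless) difference is that you obtain $\|A^{-1}\|\leq(\kap-\|J_\theta\|_1)^{-1}$ via the Neumann series for $I-\kap^{-1}J_\theta\ast(\cdot)$, whereas the paper argues through the location of $\sigma(A)$; your justification of that norm bound is if anything the more direct one.
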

\begin{proof}
	Consider $w=u-\theta$. If $u$ satisfies \eqref{eq:basic}, then $w$ satisfies \eqref{eq:basic_shifted}.
	We apply Theorem \ref{thm:homeomorphism} to the following operators
	\[
		Af = J_\theta\ast f - \kap f, \qquad Gf = \kam fa^-\ast f.
	\]
	Since $\sigma(A)\subset B_{\|J\|_1}(-\kap)$, then $\sigma(A^{-1})\subset \{\la : \frac{1}{\la}\in B_{\|J\|_1}(-\kap)\}$. Hence
	\[
		\|A^{-1}\| \leq \sup\{ |\la|: \frac{1}{\la} \in B_{\|J_\theta\|_1}(-\kap)\} = \frac{1}{\kap-\|J_\theta\|_1}.
	\]
	For any $f,g \in B_\delta(0)$,
	\[
		\|Gf-Gg\|_\infty \leq \kam (\|f\|_\infty+\|g\|_\infty)\|f-g\|_\infty \leq 2\kam \delta\|f-g\|_\infty.
	\]
	Thus for any $\delta < \frac{\kap - \|J_\theta\|_1}{2\kam}$, $A-G: B_\delta \to (A-G)(B_\delta)$ is a homeomorphism. The proof is fulfilled.
\end{proof}

The results in this section show that there are also many cases, where bifurcations are impossible.\medskip

\textbf{Acknowledgements:} CK would like to thank the VolkswagenStiftung for support via a Lichtenberg Professorship.
PT wishes to express his gratitude to the ``Bielefeld Young Researchers'' Fund for the support through the Funding Line Postdocs: ``Career Bridge Doctorate -- Postdoc''.


\begin{thebibliography}{10}

	\bibitem{ABVV2010}
		N.~Apreutesei, N.~Bessonov, V.~Volpert, V.~Vougalter. 
		\href{https://doi.org/10.3934/dcdsb.2010.13.537}{Spatial structures and generalized travelling waves for an integro-differential equation}. 
		{\em Discrete and Continuous Dynamical Systems - Series B} 13, 537--557, 2010. 

	\bibitem{AGV2006} 
		P.~Auger, S.~Genieys, V.~Volpert. 
		\href{https://doi.org/10.1051/mmnp:2006004}{Pattern and Waves for a Model in Population Dynamics with Nonlocal Consumption of Resources}. 
		{\em Mathematical Modelling of Natural Phenomena} 1, 63--80, 2006. 

\bibitem{AK2015}
  	F.~Achleitner and C.~Kuehn.
		\href{https://doi.org/10.1016/j.na.2014.09.004}{On Bounded Positive Stationary Solutions for a Nonlocal Fisher-KPP Equation}.
  	{\em Nonlinear Anal.},112:15--29, 2015.

\bibitem{AC2012}
		M.~Alfaro and J.~Coville. 
		Rapid traveling waves in the nonlocal Fisher equation connect two unstable states.
		{\emph Appl. Math. Lett.},25(12):2095--2099, 2012.

\bibitem{Ayd2018}
		O.~Aydogmus. 
		\href{https://doi.org/10.1007/s11538-017-0373-3}{Phase Transitions in a Logistic Metapopulation Model with Nonlocal Interactions}.
		{\em Bull Math Biol},80(1):228--253, 2018.

	\bibitem{Bar2017}
 	F.V. Barbosa, et al.
	\href{http://dx.doi.org/10.1016/j.physa.2016.12.082}{Pattern transitions and complexity for a nonlocal logistic map}. 
	{\em Physica A}, 2017 

 	\bibitem{Bre2011}
 		H.~Brezis. 
		Functional Analysis, Sobolev Spaces and Partial Differential Equations. 
		{\em Springer, New York}, 2011.

	\bibitem{Bri1990} 
		N.~Britton.
		\href{https://doi.org/10.1137/0150099}{Spatial structures and periodic travelling waves in an integro-differential reaction-diffusion population model}. 
		{\em SIAM J. Appl. Math.} 50, 1663--1688, 1990. 

	\bibitem{Bri1989} 
		N.~Britton. 
		\href{https://doi.org/10.1016/S0022-5193(89)80189-4}{Aggregation and the competitive exclusion principle}. 
		{\em Journal of Theoretical Biology} 136, 57--66, 1989. 
			
	\bibitem{BNPR2009}
		H.~Berestycki, G.~Nadin, B.~Perthame, L.~Ryzhik. 
		\href{https://doi.org/10.1088/0951-7715/22/12/002}{The non-local Fisher--KPP equation: travelling waves and steady states}. 
		{\em Nonlinearity} 22, 2813--2844, 2009. 

	\bibitem{BP1997}
  	B.~Bolker and S.~W. Pacala.
    \href{http://dx.doi.org/10.1006/tpbi.1997.1331}{Using moment equations to understand stochastically driven spatial pattern formation in ecological systems}.
    {\em Theor. Popul. Biol.}, 52(3):179--197, 1997.

	\bibitem{CD2007}
  	J.~Coville and L.~Dupaigne.
    On a non-local equation arising in population dynamics.
    {\em Proc. R. Soc. A}, 137(4):725--755, 2007.

	\bibitem{Gou2000}
		S.~Gourley. 
		\href{https://doi.org/10.1007/s002850000047}{Travelling front solutions of a nonlocal Fisher equation}.
		{\em J Math Biol} 41:272--284, 2000. 

	\bibitem{GCD2001}
		 S.~Gourley,M.~Chaplain, F.~Davidson,
		\href{https://doi.org/10.1080/14689360116914}{Spatio-temporal pattern formation in a nonlocal reaction-diffusion equation}
		{\em Dynamical Systems}, 16(2):173--192, 2001.
	
  \bibitem{Dur1988}
  	R.~Durrett.
    Crabgrass, measles and gypsy moths: An introduction to modern probability.
    {\em Bulletin (New Series) of the American Mathematical Society}, 18(2):117--143, 1988.
		
	\bibitem{DK1974}
		Ju.L.~Daleckii, M.G.~Krein.
    Stability of Solutions of Differential Equations in Banach Space.
		{\em Transl. Math. Monographs (AMS, Providence)}, 1974.

	\bibitem{FH2015}
		G.~Faye, M.~Holzer. 
		\href{https://doi.org/10.1016/j.jde.2014.12.006}{Modulated traveling fronts for a nonlocal Fisher--KPP equation: A dynamical systems approach}. 
		{\em Journal of Differential Equations} 258, 2257--2289, 2015. 

  \bibitem{FKK2011}
	  D.~Finkelshtein, Y.~Kondratiev, and O.~Kutoviy.  
		\href{http://dx.doi.org/doi:10.1016/j.jfa.2011.11.005}{Semigroup approach to birth-and-death stochastic dynamics in~continuum}.
    {\em J.~Funct. Anal.}, 262(3):1274--1308, 2012.

  \bibitem{FKT2015}
	  D.~Finkelshtein, Y.~Kondratiev, and P.~Tkachov.
      Traveling waves and long-time behavior in a doubly nonlocal {F}isher--{KPP} equation.
      arXiv:1508.02215, 2015.
  
  \bibitem{FKT2016}
  D.~Finkelshtein, Y.~Kondratiev, and P.~Tkachov.
    Accelerated front propagation for monostable equations with nonlocal diffusion.
    arXiv:1611.09329, 2016.

  \bibitem{FT2017}
  	D.~Finkelshtein and P.~Tkachov.
  	\newblock The hair-trigger effect for a class of nonlocal nonlinear equations.
		To apear in Nonlinearity, \newblock arXiv:1702.08076.

 \bibitem{FT2017c}
		D.~Finkelshtein and P.~Tkachov.
    \href{http://dx.doi.org/10.1080/00036811.2017.1400537}{Accelerated nonlocal nonsymmetric dispersion for monostable equations on the real line}.
		{\em Applicable Analysis}, doi: 10.1080/00036811.2017.1400537, 2017.

  \bibitem{FM2004}
	  N.~Fournier and S.~M{\'e}l{\'e}ard.
    \href{http://dx.doi.org/10.1214/105051604000000882}{A microscopic probabilistic description of a locally regulated population and macroscopic approximations}.
    {\em The Annals of Applied Probability}, 1(4):1880--1919, 2004.

	\bibitem{FKK2003}
  	M.~Fuentes, M.~Kuperman,  V.~Kenkre.
  	Nonlocal Interaction Effects on Pattern Formation in Population Dynamics.
  	{\em Phys. Rev. Lett.}, 91(15): 158104, 2003.
 
 \bibitem{G2011}
  	J.~Garnier.
  	Accelerating solutions in integro-differential equations.
  	{\em SIAM J. Math. Anal.}, 43(4): 1955--1974, 2011.

 	\bibitem{HR2014}
		F.~Hamel, L.~Ryzhik. 
		\href{https://doi.org/10.1088/0951-7715/27/11/2735}{On the nonlocal Fisher--KPP equation steady states, spreading speed and global bounds}. 
		{\em Nonlinearity} 27, 2735--2753, 2014. 

	\bibitem{Kie2012}
		H.~Kielh\"{o}fer.
		Bifurcation theory.
		{\em Applied Mathematical Sciences}, 156, 2012.

	\bibitem{Kru2018}
		M.~Krukowski.
		A functional analysis point of view on compactness theorems in function spaces.
		arXiv:1801.01898, 2018

	\bibitem{Mol1972a}
		D.~Mollison.
    Possible velocities for a simple epidemic.
    {\em Advances in Appl. Probability}, 4:233--257, 1972.

	\bibitem{Mol1972}
  	D.~Mollison.
    The rate of spatial propagation of simple epidemics.
    In {\em Proceedings of the {Sixth} {Berkeley} {Symposium} on {Mathematical} {Statistics} and {Probability} ({Univ}. {California}, {Berkeley}, {Calif}., 1970/1971), {Vol}. {III}: {Probability} theory}, pages 579--614. Univ. California Press, Berkeley, Calif., 1972.

  \bibitem{OR1970}
  	J.M.~Ortega, W.C.~Rheinboldt.
		Iterative Solution of Nonlinear Equations in Several Variables. 
		{\em Academic, New York}, 230--235, 1970.

  \bibitem{RS1978i} 
		M.~Reed, B.~Simon. 
		Methods of Modern Mathematical Physics, vol. I.
		{\em Acad. Press, New York}, 1978.
 
	\bibitem{RS1978iv}
		M.~Reed, B.~Simon. 
		Methods of Modern Mathematical Physics, vol. IV.
		{\em Acad. Press, New York}, 1978.

\end{thebibliography}
\end{document}